\pgfplotsset{compat=1.9}
\renewcommand{\tocsection}[3]{%
  \indentlabel{\@ifnotempty{#2}{\bfseries\ignorespaces#1 #2\quad}}\bfseries#3}
\renewcommand{\tocsubsection}[3]{%
  \indentlabel{\@ifnotempty{#2}{\ignorespaces#1 #2\quad}}#3}
\newcommand\@dotsep{4.5}
\def\@tocline#1#2#3#4#5#6#7{\relax
  \ifnum #1>\c@tocdepth % then omit
  \else
    \par \addpenalty\@secpenalty\addvspace{#2}%
    \begingroup \hyphenpenalty\@M
    \@ifempty{#4}{%
      \@tempdima\csname r@tocindent\number#1\endcsname\relax
    }{%
      \@tempdima#4\relax
    }%
    \parindent\z@ \leftskip#3\relax \advance\leftskip\@tempdima\relax
    \rightskip\@pnumwidth plus1em \parfillskip-\@pnumwidth
    #5\leavevmode\hskip-\@tempdima{#6}\nobreak
    \leaders\hbox{$\m@th\mkern \@dotsep mu\hbox{.}\mkern \@dotsep mu$}\hfill
    \nobreak
    \hbox to\@pnumwidth{\@tocpagenum{\ifnum#1=1\bfseries\fi#7}}\par% <-- \bfseries for \section page
    \nobreak
    \endgroup
  \fi}
\renewcommand\csname r@tocindent0\endcsname{0pt}
\def\l@subsection{\@tocline{2}{0pt}{2.5pc}{5pc}{}}
\newcommand{\N}{{\mathbb N}}
\newcommand{\Z}{{\mathbb Z}}
\newcommand{\C}{{\mathbb C}}
\newcommand{\R}{{\mathbb R}}
\newcommand{\Ps}{{\mathbb P}}
\newcommand{\V}{{\mathbb V}}
\newcommand{\K}{{\mathbb K}}
\DeclareMathAlphabet{\pazocal}{OMS}{zplm}{m}{n}
\newcommand{\calA}{{\pazocal A}}
\newcommand{\calB}{{\pazocal B}}
\newcommand{\calF}{{\pazocal F}}
\newcommand{\calG}{{\pazocal G}}
\newcommand{\calM}{{\pazocal M}}
\newcommand{\calN}{{\pazocal N}}
\newcommand{\calO}{{\pazocal O}}
\newcommand{\calP}{{\pazocal P}}
\newcommand{\calR}{{\pazocal R}}
\newcommand{\calS}{{\pazocal S}}
\newcommand{\calU}{{\pazocal U}}
\newcommand{\pazR}{{\mathcal R}}
\newcommand{\gotA}{{\mathfrak A}}
\newcommand{\gotR}{{\mathfrak R}}
\newcommand{\ra}{\rightarrow}
\newcommand{\xra}{\xrightarrow}
\newcommand{\act}{\curvearrowright}
\newcommand{\ol}{\overline}
\newcommand{\ul}{\underline}
\newcommand{\wh}{\widehat}
\newcommand{\hugezero}{\mbox{\normalfont\Huge\bfseries 0}}
\newcommand{\bigzero}{\mbox{\normalfont\Large\bfseries 0}}
\newcommand{\rk}{\operatorname{rk}}
\newcommand{\Rk}{\operatorname{Rk}}
\newcommand{\soc}{\operatorname{soc}}
\newcommand{\xddots}{%
  \raise 4pt \hbox {.}
  \mkern 6mu
  \raise 1pt \hbox {.}
  \mkern 6mu
  \raise -2pt \hbox {.}
}
\numberwithin{equation}{section}
\theoremstyle{plain}
\newtheorem{theorem}{Theorem}[section]
\newtheorem*{theorem*}{Theorem}
\newtheorem{lemma}[theorem]{Lemma}
\newtheorem{proposition}[theorem]{Proposition}
\theoremstyle{definition}
\newtheorem{definition}[theorem]{Definition}
\newtheorem{remark}[theorem]{Remark}
\newtheorem*{remark*}{Remark}
\newtheorem*{assumption*}{Assumption}
\newtheorem{observation}[theorem]{Observation}
\newtheorem{hypothesis}[theorem]{Hypothesis}
\newtheorem{notation}[theorem]{Notation}
\title[Sylvester rank functions crossed products]{Sylvester matrix rank functions on crossed products}
\author{Pere Ara}
\address[P. Ara]{Departament de Matem\`atiques, Universitat Aut\`onoma de Barcelona, 08193 Bellaterra (Barcelona), Spain.}
\email{para@mat.uab.cat}
\author{Joan Claramunt}
\address[J. Claramunt]{Departament de Matem\`atiques, Universitat Aut\`onoma de Barcelona, 08193 Bellaterra (Barcelona), Spain.}
\email{jclaramunt@mat.uab.cat}
\subjclass[2010]{Primary 16E50; Secondary 16S35, 37A05, 16D70}
\keywords{rank function, crossed product, von Neumann regular ring, completion}
\thanks{Both authors were partially supported by DGI-MINECO-FEDER through the grant MTM2017-83487-P and by the Generalitat de Catalunya through the grant 2017-SGR-1725. 
The second named author was also partially supported by DGI-MINECO-FEDER through the grant BES-2015-071439.}
\date{\today}
\begin{document}

\pagestyle{plain}
 
\begin{abstract}
In this paper we consider the algebraic crossed product $\calA := C_K(X) \rtimes_T \Z$ induced by a homeomorphism $T$ on the Cantor set $X$, where $K$ is an arbitrary field and $C_K(X)$ denotes 
the $K$-algebra of locally constant $K$-valued functions on $X$. 
We investigate the possible Sylvester matrix rank functions that one can construct on $\calA$ by means of full ergodic $T$-invariant probability measures $\mu$ on $X$. 
To do so, we present a general construction of  an approximating sequence of $*$-subalgebras $\calA_n$ which are embeddable into a (possibly infinite) product of matrix algebras over $K$. 
This enables us to obtain a specific embedding of the whole $*$-algebra $\calA$ into $\calM_K$, the well-known von Neumann continuous factor over $K$, 
thus obtaining a Sylvester matrix rank function on $\calA$ by restricting the unique one defined on $\calM_K$. This process gives a way to obtain a Sylvester matrix rank function 
on $\calA$, unique with respect to a certain compatibility property concerning the measure $\mu$, namely that the rank of a characteristic function of a clopen subset $U \subseteq X$ must 
equal the measure of $U$.
\end{abstract}

\maketitle

\tableofcontents

\normalsize

\section{Introduction and motivation}\label{section-introduction.motivation}

Sylvester matrix rank functions have been widely studied in different contexts. For a C*-algebra $\gotA$, any tracial state $\tau$ on $\gotA$ gives rise to a Sylvester matrix rank function 
defined by the rule $\text{rk}_{\tau} (A) = \lim_{n\to \infty} \tau (|A|^{1/n})$ for every matrix $A$ over $\gotA$ (\cite{BH}). For von Neumann regular rings, these functions were already 
studied by von Neumann, and they were afterwards studied in depth by I. Halperin, K. R. Goodearl and D. Handelman, amongst others, see \cite[Chapters 16--21]{Goo91} for a systematic 
exposition of this theory. For general rings, they were introduced first by Malcolmson \cite{Mal}, and were used to characterize ring homomorphisms to division rings and simple artinian rings \cite{Mal, Sch}. 
There has been a recent flurry of studies on Sylvester rank functions, in connection, amongst other subjects, with the strong Atiyah Conjecture and the L\"uck approximation Conjecture, 
see \cite{Elek16, Elek17, HanfengLi, Jaik-survey, Jaik, JaLo, JL}.

One of our main motivations on writing down this paper comes from the following well-known theorem in the theory of $C^*$-algebras. 
Recall that the celebrated Murray-von Neumann Theorem (\cite{MvN43}) states that all the hyperfinite $II_1$ factors on separable, 
infinite-dimensional Hilbert spaces are $*$-isomorphic. Let now $G$ be a countable discrete, amenable group acting on a compact metrizable 
space $X$, and let $\mu$ be an ergodic, full and $G$-invariant probability measure on $X$, with respect to which the action is essentially free.  
Denote by $\varphi_{\mu}$ the corresponding extremal tracial state on $C(X) \rtimes_r G$ defined by
$$\varphi_{\mu}(a) = \int_X E(a) d\mu , \quad a \in C(X) \rtimes_r G,$$
where $E : C(X) \rtimes_r G \ra C(X)$ is the canonical conditional expectation onto $C(X)$. Then one can embed $C(X) \rtimes_r G$ 
inside the hyperfinite $II_1$ factor $\pazR$ in such a way that $\varphi_{\mu}$ extends to the unique tracial state $\tau_{\pazR}$.

We seek to obtain analogous results in an algebraic setting, by replacing traces by Sylvester matrix rank functions, and weak completions by rank completions. 
To attain this goal we will develop an internal construction, based on the work of Putnam et al \cite{P89,P90,HPS92}. More concretely, given a homeomorphism $T$ on a totally disconnected, 
compact metrizable space $X$ and an arbitrary field $K$, we consider the $K$-algebra $C_K(X)$ of locally constant $K$-valued functions on $X$, and the algebraic crossed product $\calA := C_K(X)\rtimes_T \mathbb Z$. 
(Note that $C_K(X)$ is the algebra of continuous functions $X\to K$, where $K$ has the {\it discrete topology}.) We then show that there exists a large subalgebra $\calA_{\infty}$ of $\calA$ which embeds into a von Neumann regular algebra $\mathfrak R_{\infty}$. The large subalgebra $\calA_{\infty}$ is a union of a nested sequence of subalgebras $\calA_n$, each of which can be embedded in an algebra $\gotR_n$ which is a (possibly infinite) direct product of matrix algebras over $K$. We show that there are compatible embeddings $\gotR_n \hookrightarrow \gotR_{n+1}$, so that the algebra $\calA_{\infty} = \bigcup_{n=1}^{\infty} \calA_n$ embeds in the direct limit algebra $\gotR_{\infty} = \lim \gotR_n$. In particular, the algebra $\mathfrak R_{\infty}$ is von Neumann regular, and we show that it admits a rank function $\rk_{\gotR_{\infty}}$ such that $\mu (U) = \rk_{\gotR_{\infty}}(\pi_{\infty} (\chi_U))$ for all clopen subsets $U$ of $X$, where $\pi_{\infty}\colon \calA_{\infty} \to \gotR_{\infty}$ is the canonical embedding. The algebra $\calA$ itself does not embed in $\gotR_{\infty}$, but it does embed in the rank completion $\gotR_{{\rm rk}}$ of $\gotR_{\infty}$ with respect to $\rk_{\gotR_{\infty}}$. Using this, we show that there exists a unique Sylvester matrix rank function $\rk_{\calA}$ on $\calA$ such that $\rk_{\calA} (\chi_U)= \mu (U)$ for all clopen subsets $U$ of $X$. We moreover show that $\rk_{\calA} \in \partial_e \mathbb P (\calA)$, the set of extreme 
points of the compact convex set $\mathbb P (\calA)$ of all the Sylvester matrix rank functions on $\calA$, and using a recent result by the authors \cite{AC2}, we identify the 
algebra $\gotR_{{\rm rk}}$ with the continuous von Neumann factor $\calM_K$, in complete analogy with the analytic setting described above. 

In the final section, using the close relation between Sylvester matrix rank functions on $\calA$ and $T$-invariant measures on $X$, we show that every Sylvester matrix rank function on $\calA$ is regular in the sense of \cite{Jaik}, that is, it is induced from a ring homomorphism from $\calA$ to a regular ring.

Another main motivation for this work is the possibility to obtain results related to the study of $L^2$-invariants for group algebras. Indeed, the results in this paper can be applied to obtain approximations of a large class of group algebras and their $*$-regular closures, as follows.  

Assume that $H$ is a countable discrete, torsion abelian group and that $\alpha$ is an automorphism of $H$. One may then consider the semidirect product $G:=H\rtimes _{\alpha}\Z$. 
The dual group $\widehat{H}$ is a compact, totally disconnected metrizable topological space and, by using Pontryagin duality, one has $K[H] \cong C_K(\widehat{H})$ for any field $K$ 
whose characteristic is coprime with all the orders of the elements of $H$ and containing all the $n$-th roots of unity for those $n$ which 
appear as orders of elements of $H$ (see \cite[Section 3.1]{Cthesis} for a detailed exposition). The action of $\Z$ on $H$ induces a canonical action $\widehat{\alpha}$ of $\Z$ by 
homeomorphisms on $\widehat{H}$, and one obtains that the above isomorphism extends to an isomorphism of $K$-algebras
$$K[G]=K[H\rtimes_{\alpha} \Z] \cong K[H] \rtimes_{\alpha} \Z \cong C_K(\widehat{H})\rtimes_{\widehat{\alpha}} \Z.$$
Hence the group algebras $K[G]$ can be analyzed using the tools developed in this paper. For instance, the group algebra of the lamplighter group $\Z_2\wr \Z$ arises from this construction by taking the shift automorphism on $H=\bigoplus_{\Z} \Z_2$ (see subsection \ref{subsection-lamplighter.group} for details).
 
Recall that if $G$ is a discrete group, the $*$-regular algebra $\calU(G)$ is the algebra of unbounded operators affiliated to the group von Neumann algebra $\calN(G)$. The algebra $\calU(G)$ is a $*$-regular ring, endowed with a canonical rank function $\text{rk}$, defined by $\rk (a) = \text{tr} (s(a))$, where $s(a) \in \calN(G)$ is the support projection of $a$ and $\text{tr}$ is the canonical trace on $\calN(G)$. The $*$-regular algebra $\calU(G)$ and the $*$-regular closure $\calR_{K[G]}:=\calR(K[G], \calU(G))$ of $K[G]$ in $\calU(G)$ play a fundamental role in the study of the Atiyah Problem, see \cite{Luck}, \cite{Jaik-survey} for details. Indeed, a result of Jaikin-Zapirain \cite[Corollary 6.2]{Jaik} implies the equality 
$$\phi(K_0(\calR_{K[G]})) = \calG(G, K),$$
where $\phi$ is the state on $K_0(\calR_{K[G]})$ induced by the canonical rank function on $\calU(G)$, and $\calG(G,K)$ is the subgroup of $\R$ generated by the $l^2$-Betti numbers arising from matrices over $K[G]$. 

If $G = H \rtimes _{\alpha} \Z$ is as above, and $K$ is a subfield of the complex numbers $\C$ closed under complex conjugation and containing enough roots of unity, one can use the approach developed in this paper to obtain suitable approximations of the $*$-regular closure $\calR_{K[G]}$.  This will be developed in the forthcoming paper \cite{AC}.

This paper is structured as follows. In Section \ref{section-background.preliminaries} we collect the definitions of von Neumann regular rings and pseudo-rank functions, 
together with the notion of Sylvester matrix rank functions. We also recall the concept of a $*$-regular ring. 
In Section \ref{section-main.construction}, we give the main construction of the article, so given a full ergodic $T$-invariant measure $\mu$ on $X$ and given a partition of $X$ into clopen subsets, we obtain an approximating
subalgebra which can be embedded in a possibly infinite direct product of matrix algebras over $K$ (see Proposition \ref{proposition-embedding.B.R}). We also briefly study, in subsection \ref{subsection-lamplighter.group}, 
a motivating example, the lamplighter group algebra $K[\Z_2\wr \Z]$.   
We use the above construction in Section \ref{section-Sylvester.rank.functions.A}, together with the main result in \cite{AC2}, to obtain 
an embedding of $\calA = C_K(X) \rtimes_{\alpha} \Z$ into the well-known von Neumann 
continuous factor $\calM _K$ (Theorem \ref{theorem-completion.A.Ay}, Proposition \ref{proposition-unique.rank}, Theorem \ref{theorem-vN.cont.factor}). 
As a consequence, we get a faithful extremal Sylvester matrix rank function on $\calA$, and prove a uniqueness 
statement for such a rank function provided that a suitable compatibility condition with $\mu$ is satisfied.  Section \ref{section-space.PA} is devoted to the study of the structure of the compact convex set $\Ps(\calA)$ 
consisting of all Sylvester matrix rank functions on $\calA$. In particular, we show that all such rank functions are regular (Theorem \ref{theorem-regular.Sylvester.space}).

\section{Background and preliminaries}\label{section-background.preliminaries}

Here we collect background definitions, concepts, and results needed during the course of the paper.

\subsection{Von Neumann regular rings and pseudo-rank functions}\label{subsection-regular.rank.functions}

A unital ring $R$ is called a \textit{regular ring} if for every element $x \in R$ there exists $y \in R$ such that $x = xyx$. Note that, in this case, the element $e = xy$ is an idempotent 
and generates the same (right) ideal as $x$. In fact, a characterization for regular rings is that every finitely generated one-sided ideal of $R$ is generated by a single 
idempotent (see \cite[Theorem 1.1]{Goo91}). Regularity is closed under taking extensions, ideals\footnote{Since the definition of regularity on a unital ring does not concern the unit itself, 
the notion of a regular ideal is analogous: for any element $x$ of the ideal, there exists another element $y$, also in the ideal, such that $x = xyx$.}, direct products, matrices, direct limits, among others.

Two idempotents $e,f \in R$ are said to be \textit{equivalent}, denoted by $e \sim f$, if there exists an isomorphism $eR \cong fR$ as right $R$-modules. Equivalently, $e \sim f$ if there exist elements 
$x \in eRf$, $y \in fRe$ such that $e = xy$ and $f = yx$. 

We now introduce the notion of pseudo-rank functions on a regular ring $R$.
\begin{definition}\label{Ch1-definition-rank.function}
A \textit{pseudo-rank function} on a (regular) ring is a real-valued function $\rk : R \ra [0,1]$ satisfying the following properties:
\begin{enumerate}[a)]
\item $\rk(0) = 0$, $\rk(1) = 1$.
\item $\rk(xy) \leq \rk(x),$ $\rk(y)$ for every $x,y \in R$.
\item If $e, f$ are orthogonal idempotents, then $\rk(e+f) = \rk(e) + \rk(f)$.
\end{enumerate}
If $\rk$ satisfies the additional property
\begin{enumerate}[d)]
\item $\rk(x) = 0$ if and only if $x = 0$,
\end{enumerate}
then $\rk$ is called a \textit{rank function} on $R$.
\end{definition}
For general properties of pseudo-rank functions over regular rings one can consult \cite[Chapter 16]{Goo91}.

Every pseudo-rank function $\rk$ on a regular ring $R$ defines a pseudo-metric $d$ on $R$ by the rule $d(x,y) = \rk(x-y)$ for $x,y\in R$. If moreover $\rk$ is a rank function, then $d$ is a metric. 
Note that we can always achieve the situation where $d$ is indeed a metric by factoring through the ideal $\text{ker}(\rk)$ of elements having zero rank. Since the ring operations 
are continuous with respect to this metric, one can consider the completion $\ol{R}$ of $R$ with respect to $d$. $\ol{R}$ is again a regular ring, and $\rk$ can be uniquely extended continuously 
to a rank function $\ol{\rk}$ on $\ol{R}$ such that, with the new metric induced by $\ol{\rk}$, $\ol{R}$ is also complete, and coincides with the natural metric on $\ol{R}$ inherited from the 
completion process. It turns out that the completion $\ol{R}$ is also a right and left self-injective ring (see Theorems 19.6 and 19.7 of \cite{Goo91}).

The space of pseudo-rank functions $\Ps(R)$ on a regular ring $R$ is a Choquet simplex (\cite[Theorem 17.5]{Goo91}), and the completion $\ol{R}$ of $R$ with respect to $\rk \in \Ps(R)$ is a simple ring if 
and only if $\rk$ is an extreme point in $\Ps(R)$ (\cite[Theorem 19.14]{Goo91}).

\subsection{Sylvester matrix rank functions}\label{subsection-Sylvester.rank.functions}

Regular rings are also of great interest since every (pseudo-) rank function $\rk$ on $R$ can be uniquely extended to a (pseudo-)rank function on matrices over $R$ (see e.g. \cite[Corollary 16.10]{Goo91}). This is no longer true if we do not assume $R$ to be regular. The definition that seems to fit in the general setting is the notion of Sylvester matrix rank functions.
\begin{definition}\label{Ch1-definition-sylvester.rank}
Let $R$ be a unital ring. A \textit{Sylvester matrix rank function} ${\rm rk}$ on $R$ is a function that assigns a nonnegative real number to each matrix over $R$ and satisfies the following conditions:
\begin{enumerate}[a)]
 \item ${\rm rk} (M)= 0$ if $M$ is a zero matrix, and ${\rm rk}(1)= 1$. 
 \item ${\rm rk} (M_1M_2) \leq {\rm rk}(M_1), {\rm rk}(M_2)$ for any matrices $M_1$ and $M_2$ which can be multiplied.
 \item ${\rm rk} \begin{pmatrix} M_1 & 0 \\ 0 & M_2 \end{pmatrix} = {\rm rk} (M_1) + {\rm rk}(M_2) $ for matrices $M_1$ and $M_2$.
 \item ${\rm rk} \begin{pmatrix} M_1 & M_3 \\ 0 & M_2  \end{pmatrix} \ge {\rm rk}(M_1) + {\rm rk}(M_2)$ for any matrices $M_1$, $M_2$ and $M_3$ of appropriate sizes. 
\end{enumerate}
\end{definition}

For more theory and properties about Sylvester matrix rank functions we refer the reader to \cite{Jaik} and \cite[Part I, Chapter 7]{Sch}.

We denote by $\mathbb P(R)$ the compact convex set of Sylvester matrix rank functions on $R$. It is well-known (see for example \cite[Proposition 16.20]{Goo91}) that, in case $R$ is a regular ring, this space coincides with the space of pseudo-rank functions on $R$.

As in the case of pseudo-rank functions on a regular ring, a Sylvester matrix rank function $\rk$ on a unital ring $R$ gives rise to a 
pseudo-metric by the rule $d(x,y) = \rk(x-y)$ for $x, y \in R$. We call it \textit{faithful} if its kernel $\text{ker}(\rk)$ is exactly $\{ 0 \}$. 
In this case, $d$ becomes a metric on $R$. We can always obtain a faithful Sylvester rank function by passing to the quotient $R \ra R / \text{ker}(\rk)$. 
The ring operations are continuous with respect to this metric, so one can consider the completion $\ol{R}$ of $R$ with respect to $d$, and $\rk$ extends uniquely to a Sylvester matrix 
rank function $\ol{\rk}$ on $\ol{R}$.

\subsection{\texorpdfstring{$*$}{}-regular rings}\label{subsection-*regular.rings}

A \textit{$*$-regular ring} is a regular ring endowed with a proper involution, that is, an involution $*$ such that $x^*x= 0$ implies $x=0$.

The involution is called \textit{positive definite} in case the condition
$$\sum_{i=1}^n x_i^* x_i= 0 \quad \implies \quad x_i=0 \text{ for all } 1 \leq i \leq n$$
holds for each positive integer $n$. If $R$ is a $*$-regular ring with positive definite involution, then $M_n(R)$, endowed with the $*$-transpose involution, is also a $*$-regular ring.

For $*$-regular rings, we have a strong property concerning idempotents generating principal right/left ideals of $R$. In fact, if we demand these idempotents to be 
projections (i.e. elements $e \in R$ such that $e = e^2 = e^*$), then it turns out that there exist unique projections generating a given principal right/left ideal. 
More precisely, for each element $x \in R$ there are unique projections $e, f \in R$ (denoted by $\mathrm{LP}(x)$ and $\mathrm{RP}(x)$ and called the \textit{left} and \textit{right projections} of $x$, respectively) 
such that $xR = eR$ and $Rx = Rf$; moreover, there exists a unique element $y \in f R e$ such that $xy = e$ and $yx = f$, termed the \textit{relative inverse} of $x$.

We refer the reader to \cite{Ara87, Berb, Jaik} for further information about $*$-regular rings.

\section{The main construction}\label{section-main.construction}

We start with some preliminaries. Here, we will concentrate on the most basic dynamical system, the one provided by a single homeomorphism $T : X \ra X$ on a totally disconnected, 
compact metrizable space $X$. Recall that a probability measure $\mu$ on $X$ is \textit{ergodic} if for every $T$-invariant Borel subset $E$ of $X$ we have that 
either $\mu(E) = 0$ or $\mu(E) = 1$, and $\mu$ is said to be \textit{invariant} in case $\mu(T(E)) = \mu(E)$ for every Borel subset $E$ of $X$. 
For instance, it is well-known (cf. \cite[Example 3.1]{KM}) that the product measure $\mu$ on $\{0,1\}^{\Z}$, where we take the $\big(\frac{1}{2},\frac{1}{2}\big)$-measure on $\{0,1\}$, is invariant and ergodic.
 
We will also often assume that our ergodic invariant measure $\mu$ is \textit{full}, that is, $\mu(U) >0$ for every non-empty open subset $U$ of $X$.\footnote{This is not true for a general ergodic 
invariant measure. For instance take $T= \text{Id}$; then every measure is invariant, in particular the 
one-point mass measures are invariant and ergodic.}

The following is a simple application of Rokhlin's Lemma. We include a proof for the convenience of the reader. 

\begin{lemma}\label{lemma-Rokhlin} 
Let $\mu$ be an ergodic $T$-invariant probability measure on $X$, and take $E$ to be a Borel subset of $X$ with positive measure. Consider the first return map $r_E : E \ra \N \cup \{ \infty \}$, defined by 
$$r_E(x) = \mathrm{min} \{ l > 0 \mid T^l(x) \in E \}$$
in case there is $l>0$ such that $T^l(x) \in E$, and $r_E(x) = \infty$ otherwise. For each $k \in \N$, consider $Y_k^l = T^l(r_E^{-1} (k))$, for $0 \leq l \leq k-1$. Also let $Y_{\infty} = E \backslash \bigsqcup_{k \in \N} Y_k^0$ be the set of points of $E$ that do not return to $E$.

Then we have that $T(Y_k^l) = Y_{k}^{l+1}$ for $0 \leq l < k-1$, all the sets $Y_k^l$ are mutually disjoint, and the set 
$$Y= Y(E) = \bigsqcup _{k \geq 1} \bigsqcup_{l=0}^{k-1} Y_k^l$$
satisfies that $\mu(Y)= 1$. In particular, we get $\sum _{k \geq 1} \sum_{l=0}^{k-1} \mu(Y_k^l) = \sum_{k \geq 1} k \mu(Y_k^0) = 1$.
\end{lemma}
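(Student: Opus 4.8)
The plan is to read this as the Kakutani--Rokhlin tower decomposition of $X$ associated to the base $E$, and to prove the four assertions in increasing order of difficulty. The shift relation and the disjointness are formal consequences of $T$ being a bijection together with the minimality built into $r_E$, whereas the mass statement $\mu(Y)=1$ is where the dynamical hypotheses (invariance, ergodicity, recurrence) genuinely enter. For the shift relation, since $T$ is a homeomorphism one has $T(Y_k^l)=T(T^l(r_E^{-1}(k)))=T^{l+1}(r_E^{-1}(k))=Y_k^{l+1}$ whenever $0\le l<k-1$, which is immediate. For disjointness, suppose $T^l(x)=T^{l'}(x')$ with $x\in r_E^{-1}(k)$, $x'\in r_E^{-1}(k')$, $0\le l\le k-1$, $0\le l'\le k'-1$, and say $l\le l'$; injectivity of $T$ gives $x=T^{l'-l}(x')$, and if $l'-l>0$ then $x'$ would return to $E$ at the time $0<l'-l<k'$, contradicting $r_E(x')=k'$. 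Hence $l=l'$, so $x=x'$ and $k=k'$, proving the $Y_k^l$ are mutually disjoint.

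The core is $\mu(Y)=1$. The first ingredient is Poincar\'e recurrence: as $\mu$ is $T$-invariant and $\mu(E)>0$, almost every point of $E$ returns to $E$, so the non-returning set $Y_\infty$ is $\mu$-null; equivalently $\bigsqcup_{k\ge 1} Y_k^0=r_E^{-1}(\N)$ exhausts $E$ up to measure zero, whence $\mu(Y)\ge \mu\big(\bigsqcup_{k} Y_k^0\big)=\mu(E)>0$. The second ingredient is that $Y$ is $T$-invariant modulo a null set. Indeed, $T$ carries $Y_k^l$ onto $Y_k^{l+1}$ for $l<k-1$, while it carries the top level $Y_k^{k-1}=T^{k-1}(r_E^{-1}(k))$ onto $T^{k}(r_E^{-1}(k))\subseteq E$, i.e. back into the base $E=\bigsqcup_j Y_j^0$ up to the null set $Y_\infty$; therefore $\mu(T(Y)\setminus Y)=0$. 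Applying $T^{-1}$ and using $\mu\circ T^{-1}=\mu$ yields $\mu(Y\setminus T^{-1}(Y))=0$, and then $\mu(T^{-1}(Y))=\mu(Y)$ forces $\mu(T^{-1}(Y)\setminus Y)=0$ as well, so $\mu(T^{-1}(Y)\,\triangle\,Y)=0$. Since $\mu$ is ergodic, a set invariant up to measure zero has measure $0$ or $1$ (replace $Y$ by the genuinely invariant $\limsup_n T^{-n}(Y)$, which differs from $Y$ by a null set, and apply the definition of ergodicity); as $\mu(Y)>0$ we conclude $\mu(Y)=1$.

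Finally, the ``in particular'' clause follows by countable additivity and invariance of $\mu$: disjointness gives $\sum_{k\ge1}\sum_{l=0}^{k-1}\mu(Y_k^l)=\mu(Y)=1$, and since $Y_k^l=T^l(Y_k^0)$ with $T$ preserving $\mu$ we have $\mu(Y_k^l)=\mu(Y_k^0)$, so $\sum_{l=0}^{k-1}\mu(Y_k^l)=k\,\mu(Y_k^0)$ and hence $\sum_{k\ge1}k\,\mu(Y_k^0)=1$. The main obstacle is the identity $\mu(Y)=1$, and precisely the two dynamical facts it rests on: that the top levels of the towers return into the base (handled by Poincar\'e recurrence, which simultaneously disposes of $Y_\infty$), and the passage from ``invariant modulo null sets'' to a genuinely $T$-invariant set to which the definition of ergodicity applies. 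The remaining claims are, by contrast, bookkeeping with the bijection $T$ and the minimality of the first return time.
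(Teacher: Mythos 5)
Your proof is correct and follows essentially the same route as the paper: both establish $\mu(Y_\infty)=0$ via the disjointness of the forward translates of $Y_\infty$ (you cite Poincar\'e recurrence, the paper reproves it in place), and both then apply ergodicity after replacing the almost-invariant tower by a genuinely $T$-invariant set of the same measure (you use $\limsup_n T^{-n}(Y)$, the paper uses $Z_0=\bigcap_{j\ge 0}T^j(Z)$ for the forward-invariant set $Z=Y\sqcup\bigsqcup_j T^j(Y_\infty)$). The remaining verifications (disjointness via minimality of the return time, and the ``in particular'' identity via invariance) match the paper's, with your disjointness argument being spelled out in more detail than the paper's one-line assertion.
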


\begin{proof}
Note that each $Y_k^0$ is given by the set $E \cap T^{-1}(X \backslash E) \cap \cdots \cap T^{-k+1}(X \backslash E) \cap T^{-k}(E)$, which are Borel sets. Therefore $Y_{\infty} = E \backslash \bigsqcup_{k \in \N} Y_k^0$ is also Borel. Moreover, if we choose $E$ to be a clopen set, then all the $Y_k^l$ are also clopen sets, and $Y_{\infty}$ is a closed set.

By its definition all the sets $Y_k^l, Y_{k'}^{l'}$ and the $T$-translates of $Y_{\infty}$ are pairwise disjoint. Therefore
$$1 \geq \mu \Big( \bigsqcup_{j \geq 0} T^j(Y_{\infty}) \Big) = \sum_{j \geq 0} \mu(T^j(Y_{\infty})) = \sum_{j \geq 0} \mu(Y_{\infty}),$$
which shows that $\mu(Y_{\infty}) = 0$.

Now we set
$$Z := \Big( \bigsqcup_{j \geq 0} T^j(Y_{\infty}) \Big) \sqcup \Big( \bigsqcup_{k \geq 1} \bigsqcup_{l=0}^{k-1} Y_k^l \Big).$$
We observe that $T(Z) \subseteq Z$. Indeed, it is clear that $T(Y_k^l) \subseteq Z$ for $0 \leq l < k-1$, and also, $T(Y_k^{k-1}) \subseteq E \subseteq Z$. Take $Z_0 = \bigcap_{j \geq 0} T^j(Z) \subseteq Z$. Clearly $T(Z_0) = Z_0$ since $T(Z) \subseteq Z$, so $Z_0$ is a $T$-invariant Borel set. Hence by ergodicity of the measure either $\mu(Z_0) = 0$ or $\mu(Z_0) = 1$. But by invariance and the fact that $T^j(Z) \subseteq Z$ for all $j \geq 0$,
$$\mu(Z \backslash Z_0) = \mu\Big( \bigcup_{j\geq 0} Z \backslash T^j(Z) \Big) \leq \sum_{j \geq 0} \mu(Z \backslash T^j(Z)) = 0,$$
so $\mu(Z) = \mu(Z_0)$ is either $0$ or $1$. Since $\mu(E) > 0$, we get $\mu(Z) = 1$. The result follows by invariance of the measure and the fact that $Y_{\infty}$ is a null-set.
\end{proof}

For any field with involution $K$, we will consider the $*$-algebra $C_K(X)$ of continuous functions from $X$ to $K$, where $K$ is endowed with the discrete topology (i.e., $C_K(X)$ is the algebra of locally constant $K$-valued functions on $X$). A homeomorphism $T$ of $X$ induces an action $\alpha$ of the integers $\Z$ on $C_K(X)$ by 
$$\alpha_n(b)(x) = b(T^{-n}(x))$$
for $b\in C_K(X)$ and $x\in X$. Note that if $U$ is a clopen subset of $X$ and $\chi_U$ denotes the characteristic function of $U$, then $\chi_U \in C_K(X)$ and $\alpha_n(\chi_U) = \chi_{T^n(U)}$ for each $n \in \Z$.

If $A$ is a $*$-algebra and $\alpha$ is a $*$-automorphism of $A$, we define the algebraic crossed product $A \rtimes_{\alpha} \Z$ as the $*$-algebra of formal
finite sums $\sum_{i \in \Z} a_it^i$, where $a_i \in A$. The sum is componentwise, the product is determined by the rule $ta := \alpha(a)t$, and the involution is given by $(at^i)^* := t^{-i}a^* = \alpha^{-i}(a^*) t^{-i}$. If $T$ is a homeomorphism of a totally disconnected metrizable compact space $X$, and $\alpha (=\alpha _1) $ is the $*$-automorphism of $C_K(X)$ defined above, we will denote the crossed product $C_K(X) \rtimes_{\alpha} \Z$ by $C_K(X) \rtimes_T \Z$.

Recall also the definition of a \textit{partial} algebraic crossed product \cite{Exel}. A partial action of $\Z$ on a $*$-algebra $A$ is a pair $\phi = ( \{ A_n \}_{n \in \Z}, \{\phi_n\}_{n \in \Z})$ consisting of a collection of self-adjoint two-sided ideals $A_n$ of $A$ and a collection of $*$-isomorphisms $\phi_n : A_{-n} \to A_n$ such that
\begin{enumerate}[a)]
\item $A_0 = A$ and $\phi_0$ is the identity map, and
\item $\phi_n \circ \phi_m \subseteq \phi_{n+m}$, meaning that $\phi_n \circ \phi_m$ is defined on the largest possible domain where the composition makes sense and $\phi_{n+m}$ extends it.
\end{enumerate}
The partial algebraic crossed product of $A$ by $\Z$ with respect to the partial action $\phi$, denoted by $A \rtimes_{\phi} \Z$, is defined to be the set of all finite formal sums $\sum_{n \in \Z}a_n \delta_n$ 
with $a_n \in A_n$ and $\delta_n$ are indeterminates, with the usual addition and scalar multiplication, and the product defined by the rule $(a_n \delta_n) \cdot (b_m \delta_m) := \phi_n( \phi_{-n}(a_n)b_m) \delta_{n+m}$. The involution is then defined through the rule $(a_n \delta_n)^* := \phi_{-n}(a_n^*) \delta_{-n}$.

We start our construction by first approximating our space $X$, and then using this approximation to construct a family of approximating algebras for $C_K(X) \rtimes_T \Z$. First, some definitions.

\begin{definition}\label{definition-quasi.partitions}
Let $Y$ be a topological space, endowed with a probability measure $\mu$.
\begin{enumerate}[a)]
\item By a \textit{partition} of $Y$ we will understand a finite family $\calP$ of nonempty, pairwise disjoint clopen subsets of $Y$, such that $Y = \bigsqcup_{Z \in \calP}Z$.

\noindent Given two partitions $\calP_1, \calP_2$ of $Y$, we say that $\calP_2$ is finer than $\calP_1$ (or $\calP_1$ is coarser than $\calP_2$), written $\calP_1 \precsim \calP_2$, if every element $Z \in \calP_2$ is contained in a (unique) element $Z' \in \calP_1$, that is $Z \subseteq Z'$.
\item By a \textit{quasi-partition} of $Y$ we will understand a finite or countable family $\ol{\calP}$ of nonempty, pairwise disjoint clopen subsets of $Y$, such that $Y = \bigsqcup_{\ol{Z} \in \ol{\calP}} \ol{Z}$ up to a set of measure $0$, i.e. $\mu \Big( Y \backslash \bigsqcup_{\ol{Z} \in \ol{\calP}} \ol{Z} \Big) = 0$.

\noindent Given two quasi-partitions $\ol{\calP}_1, \ol{\calP}_2$ of $Y$, we say that $\ol{\calP}_2$ is finer than $\ol{\calP}_1$ (or $\ol{\calP}_1$ is coarser than $\ol{\calP}_2$), written $\ol{\calP}_1 \precsim \ol{\calP}_2$, if every element $\ol{Z} \in \ol{\calP}_2$ is contained in a (unique) element $\ol{Z}' \in \ol{\calP}_1$, that is $\ol{Z} \subseteq \ol{Z}'$.
\end{enumerate}
\end{definition}

\noindent Note that, in the hypotheses of Lemma \ref{lemma-Rokhlin}, the family $\{ Y_k^l \mid Y_k^l \neq \emptyset \}$ forms a quasi-partition of $X$.

\begin{definition}\label{definition-approx.alg}
Consider the $*$-algebra $\calA := C_K(X) \rtimes_T \Z$. Let $E$ be a nonempty clopen subset of $X$, and let $\calP$ be a partition of $X \backslash E$. Define $\calB := \calA (E, \calP)$ as the unital $*$-subalgebra of $\calA$ generated by the elements $\chi_Z \cdot t$ for $Z \in \calP$.
\end{definition}

Our first goal is to express $\calB$ as a partial algebraic crossed product by a $\Z$-action. Let $\calB_0 = C_K(X) \cap \calB$, which is a commutative $*$-subalgebra of $\calB$. We first give a complete description of $\calB_0$ in terms of characteristic functions.

\begin{lemma}\label{lemma-comm.B0}
The $*$-algebra $\calB_0$ is linearly spanned by $1$ and the projections of the form
\begin{equation}\label{equation-projections.B0}
\chi_{T^{-r}(Z_{-r})\cap T^{-r+1}(Z_{-r+1}) \cap \cdots \cap Z_0\cap T(Z_1)\cap \cdots \cap T^{s-1}(Z_{s-1})},
\end{equation}
where $Z_{-r}, ..., Z_0, ..., Z_{s-1} \in \calP$, and $r, s \geq 0$. 
 \end{lemma}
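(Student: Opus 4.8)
The plan is to exploit the $\Z$-grading that $\calB$ inherits from $\calA = C_K(X)\rtimes_T \Z$ and to compute products of the generators and their adjoints completely explicitly. First I would record the two facts that drive everything. The involution gives $(\chi_Z t)^* = t^{-1}\chi_Z = \chi_{T^{-1}(Z)}t^{-1}$, so $\calB$ is generated as a $*$-algebra by the homogeneous elements $\chi_Z t$ (degree $+1$) and $\chi_{T^{-1}(Z)}t^{-1}$ (degree $-1$), $Z\in\calP$. Hence $\calB=\bigoplus_n(\calB\cap C_K(X)t^n)$ is a graded subalgebra, and $\calB_0 = C_K(X)\cap\calB$ is spanned by the \emph{degree-$0$ monomials} in these generators. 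Second, using the relation $t^{\pm 1}\chi_V = \chi_{T^{\pm 1}(V)}t^{\pm 1}$ repeatedly, I would show by a direct induction that a monomial $g_1\cdots g_m$, where $g_j=\chi_{V_j}t^{\epsilon_j}$ is labelled by a partition element $Z_j\in\calP$ (with $V_j=Z_j$ if $\epsilon_j=+1$ and $V_j=T^{-1}(Z_j)$ if $\epsilon_j=-1$), collapses to
\[
g_1\cdots g_m = \chi_{\,\bigcap_{j=1}^m T^{\min(s_{j-1},s_j)}(Z_j)}\, t^{s_m},
\]
where $s_0=0$ and $s_j=\epsilon_1+\cdots+\epsilon_j$. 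The point of the computation is that \emph{both} a $+1$-step (contributing $T^{s_{j-1}}(Z_j)$) and a $-1$-step (contributing $T^{s_{j-1}-1}(Z_j)=T^{s_j}(Z_j)$) produce the factor $T^{\min(s_{j-1},s_j)}(Z_j)$, i.e. the lower endpoint of that step.

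Next I would specialize to degree $0$, so $s_m=0$ and $(s_0,\dots,s_m)$ is a closed lattice walk on $\Z$ starting and ending at $0$. The combinatorial heart is to show that the set of edge-levels $\{\min(s_{j-1},s_j)\}$ is exactly the integer interval $\{-r,\dots,s-1\}$, where $-r=\min_j s_j\le 0$ and $s=\max_j s_j\ge 0$. Since the walk is a path on $\Z$ that starts at $0$ and attains both $s$ and $-r$, a discrete intermediate-value argument forces it to traverse, for every $\ell$ with $-r\le\ell\le s-1$, an edge joining $\ell$ and $\ell+1$. Grouping the intersection by level then gives $U=\bigcap_{i=-r}^{s-1}T^i\big(\bigcap_{j:\min(s_{j-1},s_j)=i}Z_j\big)$, and because $\calP$ consists of pairwise disjoint clopen sets, each inner intersection is either empty or a single $Z_i\in\calP$. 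Thus every degree-$0$ monomial equals $0$, equals $1$ (the empty walk), or equals a projection $\chi_U$ of exactly the displayed form $\chi_{T^{-r}(Z_{-r})\cap\cdots\cap Z_0\cap\cdots\cap T^{s-1}(Z_{s-1})}$, which establishes the inclusion $\calB_0\subseteq\mathrm{span}\{1,\text{these projections}\}$.

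For the reverse inclusion I would exhibit explicit words realizing each such projection. Setting $g=(\chi_{Z_0}t)\cdots(\chi_{Z_{s-1}}t)$ and $h=(\chi_{Z_{-1}}t)^*\cdots(\chi_{Z_{-r}}t)^*$, the collapse formula gives $g=\chi_{Z_0\cap\cdots\cap T^{s-1}(Z_{s-1})}\,t^{s}$ and $h=\chi_{T^{-r}(Z_{-r})\cap\cdots\cap T^{-1}(Z_{-1})}\,t^{-r}$, whence $gg^*=\chi_{Z_0\cap\cdots\cap T^{s-1}(Z_{s-1})}$ and $hh^*=\chi_{T^{-r}(Z_{-r})\cap\cdots\cap T^{-1}(Z_{-1})}$. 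The degree-$0$ element $(hh^*)(gg^*)\in\calB$ then equals the required $\chi_U$, so $\chi_U\in\calB_0$, giving the opposite inclusion and the equality.

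I expect the main obstacle to be the combinatorial step in the second paragraph: the contiguity of the edge-levels together with its interaction with the partition property. This is precisely what guarantees that the resulting intersection runs over a \emph{full consecutive range} $-r,\dots,s-1$ with a well-defined label $Z_i$ at every level, rather than over an index set with gaps. Once this is in place, the remaining manipulations are routine bookkeeping with the commutation relation $t\chi_V=\chi_{T(V)}t$.
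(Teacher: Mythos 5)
Your proof is correct, and it organizes the hard direction differently from the paper. The easy inclusion (realizing each projection \eqref{equation-projections.B0} by explicit words, via $gg^*$ and $hh^*$) is the same in both arguments. For the converse, the paper proceeds by induction on the length $n$ of a degree-zero word $a_1\cdots a_n$: if some proper initial segment already has degree zero it splits the word there and uses that the family $\calF$ of projections \eqref{equation-projections.B0} is closed under products; otherwise it peels off the outer pair $a_1=\chi_{Z_1}t$, $a_n=t^{-1}\chi_{Z_2}$, conjugates the inner word by $t$, and uses disjointness of $\calP$ to conclude. You instead derive a single closed formula $g_1\cdots g_m=\chi_{\bigcap_j T^{\min(s_{j-1},s_j)}(Z_j)}\,t^{s_m}$ --- which is correct, as one checks by pushing all powers of $t$ to the right, the case distinction $\epsilon_j=\pm1$ producing in both cases the translate by the lower endpoint of the $j$-th step --- and then handle degree zero by the discrete intermediate-value observation that a closed walk on $\Z$ traverses precisely the edges at levels $\min_j s_j,\dots,\max_j s_j-1$, with the partition property collapsing each level to a single $Z_i$ or to the empty set. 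The two arguments encode the same combinatorics (the paper's case split is in effect a first-return/outermost-pair decomposition of your lattice walk), but yours is non-inductive and strictly more informative: it identifies every monomial exactly, makes the ``consecutive range of levels'' phenomenon explicit rather than implicit in the induction, and the same closed formula would also streamline the later computations of the type carried out in Lemma \ref{lemma-image.of.pi}. The paper's induction is shorter to write down but proves only the membership statement.
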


\begin{proof}
Recall that for a clopen subset $U$ of $X$, $t \chi_U t^{-1} = T(\chi_U) = \chi_{T(U)}$. We have 
$$(\chi_{Z_0}t)(\chi_{Z_1}t)\cdots (\chi_{Z_{s-1}}t)(\chi_{Z_{s-1}}t)^{*} \cdots (\chi_{Z_0}t)^{*} = \chi_{Z_0 \cap T(Z_1) \cap \cdots \cap T^{s-1}(Z_{s-1})}$$
and
$$(\chi_{Z_{-1}}t)^* \cdots (\chi_{Z_{-r}}t)^*(\chi_{Z_{-r}}t) \cdots (\chi_{Z_{-1}}t) = \chi_{T^{-r}(Z_{-r}) \cap \cdots \cap T^{-1}(Z_{-1})},$$
which shows that all the projections of the form \eqref{equation-projections.B0} belong to $\calB_0$.

Let $\calF$ be the set of projections of the form \eqref{equation-projections.B0} together with $0$. Observe that the family $\calF$ is closed under products. %for if we have two projections
%$$\chi_{T^{-r}(Z_{-r})\cap T^{-r+1}(Z_{-r+1}) \cap \cdots \cap Z_0 \cap T(Z_1) \cap \cdots \cap T^{s-1}(Z_{s-1})}, \quad \chi_{T^{-r'}(Z_{-r'}) \cap T^{-r'+1}(Z'_{-r'+1}) \cap \cdots \cap Z'_0 \cap T(Z'_1) \cap \cdots \cap T^{s'-1}(Z'_{s'-1})},$$
%where $Z_{-r}, ..., Z_0, ..., Z_{s-1}, Z'_{-r'}, ..., Z'_0, ..., Z'_{s'-1} \in \calP$, and $r, r', s, s' \geq 0$ with for example $r > r'$ and $s > s'$, its product is given by
%$$\chi_{T^{-r}(Z_{-r}) \cap \cdots \cap T^{-r'}(Z_{-r'} \cap Z'_{-r'}) \cap \cdots \cap (Z_0 \cap Z'_0) \cap T(Z_1 \cap Z'_1) \cap \cdots \cap T^{s'-1}(Z_{s'-1} \cap Z'_{s'-1}) \cap \cdots \cap T^{s-1}(Z_{s-1})}.$$
%Since $\calP$ is a partition of $X \backslash E$, it is again of the form \eqref{equation-projections.B0} or zero. The other cases for different $r,r',s,s'$ are similar.
Hence, to show the result, it is enough to prove that any product of generators $a_1 \cdots a_n$ of degree $0$ in $t$ belongs to $\calF$ (here each $a_i$ is either of the form $\chi_Z t$ or of the form $(\chi_Z t)^* = t^{-1} \chi_Z$, for $Z \in \calP$). An immediate observation is that if $a_1 \cdots a_n$ is of degree $0$, then $n$ must be even. We will proceed to show the result by induction on $n$.

Clearly the result is true for $n = 2$, so assume $n > 2$ is even and that each product of at most $n-2$ generators of degree $0$ in $t$ belongs to $\calF$. Define $d(i) \in \Z$ by $d(i)= \text{deg}_t(a_1 \cdots a_i)$. Suppose, for instance, that $d(1) =1$.

If there is $r < n$ such that $d(r) =0$, then since
$$0 = \text{deg}_t(a_1 \cdots a_n) = d(r) + \text{deg}_t(a_{r+1} \cdots a_n) = \text{deg}_t(a_{r+1} \cdots a_n),$$
we can use induction to conclude that the products $a_1 \cdots a_r$ and $a_{r+1} \cdots a_n$ belong to $\calF$, hence the whole product $a_1 \cdots a_n$ belongs to $\calF$ too.

Otherwise we must have $d(r) > 0$ for all $r < n$, and since $d(n)=0$ we must have that $\text{deg}_t(a_n) = -1$. Then necessarily $a_1 = \chi_{Z_1}t$ and $a_n= t^{-1}\chi_{Z_2}$ for some $Z_1, Z_2 \in \calP$, and thus
\begin{equation}\label{equation-product.B0}
a_1 a_2 \cdots a_{n-1} a_n = \chi_{Z_1} (ta_2\cdots a_{n-1}t^{-1}) \chi_{Z_2}.
\end{equation}
By induction, the product $a_2 \cdots a_{n-1}$ belongs to $\calF$, and hence $t a_2 \cdots a_{n-1}t^{-1}$ 
either belongs to $\calF$ or it is of the form $\chi_{T(Z'_1) \cap \cdots \cap T^{s-1}(Z'_{s-1})}$, for $Z'_1,\dots ,Z'_s\in \calP$. 
Therefore, the product \eqref{equation-product.B0} is zero if $Z_1 \neq Z_2$ and, if $Z_1 = Z_2$, it belongs to $\calF$. In either case $a_1 \cdots a_n$ belongs to $\calF$, as desired. 
The case where $d(1) = -1$ is similar. 
 
It then follows that $\calB_0$ is the linear span of the given set of projections $\calF$. This concludes the proof of the lemma.
\end{proof}

We now consider the structure of $\calB$ as a partial algebraic crossed product by $\Z$ on $\calB_0$. Note that we can write $\calB = \bigoplus_{i \in \Z} \calB_i t^i$, where $\calB_i = \chi_{X \setminus (E \cup T(E) \cup \cdots \cup T^{i-1}(E))} \calB_0$ and $\calB_{-i} = \chi_{X \setminus (T^{-1}(E) \cup \cdots \cup T^{-i}(E))} \calB_0$ for $i > 0$. %Let's show this only for $\calB_i$ with $i > 0$, being the other case analogous. Since an element of $\calB$ is a linear combination of products of generators $a_1 \cdots a_n$, being each $a_j$ either $\chi_Z t$ or $t^{-1} \chi_Z$ ($Z \in \calP$), it is enough to show that if such a product has degree $i$ in $t$, then it can be written as
%$$a_1 \cdots a_n = b_i t^i, \quad \text{ with } b_i \in \calB_0 \text{ satisfying } b_i = \chi_{X \backslash ( E \cup \cdots \cup T^{i-1}(E))} b_i.$$
%We will show this by induction on $i > 0$. For $i = 1$, take $a_j$ to be a generator such that $\text{deg}_t(a_1 \cdots a_{j-1}) = 0$ and $\text{deg}_t(a_1 \cdots a_{j-1} a_j) = 1$. In this case $a_j = \chi_Z t$ for some $Z \in \calP$, and the products $a_1 \cdots a_{j-1}, a_{j+1} \cdots a_n$ both belong to $\calB_0$, so
%$$a_1 \cdots a_n = (a_1 \cdots a_{j-1}) \chi_Z t (a_{j+1} \cdots a_n) = (a_1 \cdots a_{j-1}) (t a_{j+1} \cdots a_n t^{-1}) \chi_Z \cdot \chi_{X \backslash E} t.$$
%We observe that the claim is true in this case. Now assume that the result is true for $i \geq 1$, and take $a_1 \cdots a_n$ of degree $i+1$ in $t$. As before, take $a_j$ to be a generator such that $\text{deg}_t(a_1 \cdots a_{j-1}) = i$ and $\text{deg}_t(a_1 \cdots a_{j-1} a_j) = i+1$. In this case $a_j = \chi_Z t$ for some $Z \in \calP$, the product $a_1 \cdots a_{j-1}$ is of the form $b_i t^i$ with $b_i = \chi_{X \backslash ( E \cup \cdots \cup T^{i-1}(E))} b_i$ by induction hypothesis, and $a_{j+1} \cdots a_n \in \calB_0$. We compute
%\begin{align*}
%a_1 \cdots a_n & = (a_1 \cdots a_{j-1}) \chi_Z t (a_{j+1} \cdots a_n) = (b_i t^i) \chi_Z \cdot \chi_{X \backslash E} t (a_{j+1} \cdots a_n) \\
%& = \chi_{X \backslash (E \cup \cdots \cup T^{i-1}(E) \cup T^i(E))} \cdot \chi_{T^i(Z)} b_i (t^{i+1} a_{j+1} \cdots a_n t^{-i-1}) t^{i+1},
%\end{align*}
%which is an element of the desired form. This concludes the induction, and the claim is proved.

Observe that if $b_i t^i \in \calB$ then $b_i = \chi_{X \backslash (E \cup \cdots \cup T^{i-1}(E))} b_i$ and $b_{-i} = \chi_{X \backslash (T^{-1}(E) \cup \cdots \cup T^{-i}(E))} b_{-i}$ for $i > 0$, and so
$$b_i t^i = b_i (\chi_{X \backslash E} t)^i, \quad b_{-i} t^{-i} = b_{-i} (t^{-1} \chi_{X \backslash E})^i \quad \text{for positive $i$}.$$

In particular, it is true that $\calB_i t^i = \calB_0 (\chi_{X \backslash E} t)^i$ and $\calB_{-i}t^{-i} = \calB_0 (t^{-1} \chi_{X \backslash E})^i$ for $i > 0$.

\begin{observation}\label{observation-approx.t}
One needs to be careful with the term $\chi_{X \backslash E} t$ because, although $t$ is invertible with inverse $t^{-1} = t^*$, this is not true for $\chi_{X \backslash E} t$. As a consequence, equalities like
$$(\chi_{X \backslash E} t)^i = (\chi_{X \backslash E} t)^{i+j} (\chi_{X \backslash E} t)^{-j}$$
are no longer true and even meaningful for $i > j > 0$. In the next lemma we summarize the basic arithmetics that one can achieve with these powers.
\end{observation}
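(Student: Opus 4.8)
The plan is to prove the arithmetic identities promised by the Observation by reducing every product of the element $\chi_{X\setminus E}t$ and its adjoint $(\chi_{X\setminus E}t)^{*}=t^{-1}\chi_{X\setminus E}$ to a single monomial $\chi_W t^{m}$ with $W$ an explicitly described clopen set. The only tools required are the commutation rule $t\chi_U t^{-1}=\chi_{T(U)}$, equivalently $t^{i}\chi_U=\chi_{T^{i}(U)}t^{i}$, and the fact that characteristic functions of clopen sets are commuting self-adjoint idempotents with $\chi_U\chi_V=\chi_{U\cap V}$. I would first establish the closed form for the positive powers: writing $p_i:=\chi_{X\setminus(E\cup T(E)\cup\cdots\cup T^{i-1}(E))}$ (with $p_0=1$), a short induction on $i$ — multiplying $(\chi_{X\setminus E}t)^{i}=p_i t^{i}$ on the right by $\chi_{X\setminus E}t$ and pushing $t^{i}$ past $\chi_{X\setminus E}$ — yields
$$(\chi_{X\setminus E}t)^{i}=p_i\,t^{i},\qquad i\geq 0.$$
Taking adjoints immediately gives $(t^{-1}\chi_{X\setminus E})^{i}=\big((\chi_{X\setminus E}t)^{i}\big)^{*}=t^{-i}p_i$. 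These two formulas already make transparent the failure of invertibility flagged in the Observation: the $i$-th power lives on the shrinking clopen set cut out by $p_i$.

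Next I would record the domain and range projections. Multiplying the two closed forms gives $(\chi_{X\setminus E}t)^{i}(t^{-1}\chi_{X\setminus E})^{i}=p_i$, a projection, whereas $(t^{-1}\chi_{X\setminus E})^{i}(\chi_{X\setminus E}t)^{i}=t^{-i}p_i t^{i}=\chi_{T^{-i}(X\setminus(E\cup\cdots\cup T^{i-1}(E)))}$, so each power is a partial isometry with the indicated source and range projections. The general mixed products are then handled by splitting off $\min(a,b)$ paired factors, inserting the projection identity just obtained, and commuting the surviving power of $t$ across the remaining characteristic function. For instance, for $a\geq b\geq 0$ one computes $(\chi_{X\setminus E}t)^{a}(t^{-1}\chi_{X\setminus E})^{b}=p_a t^{a-b}p_b=p_a t^{a-b}$, since $t^{a-b}p_b=\chi_{X\setminus(T^{a-b}(E)\cup\cdots\cup T^{a-1}(E))}t^{a-b}$ and the support of $p_a$ is contained in that set; the case $b\geq a$, and the analogous reductions for $(t^{-1}\chi_{X\setminus E})^{a}(\chi_{X\setminus E}t)^{b}$, are entirely symmetric and again produce a single monomial of degree $a-b$.

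The genuine work lies not in any one identity but in the careful bookkeeping of which translates $T^{k}(E)$ enter the defining union of each clopen set after a power of $t$ is commuted across it. Because $\chi_{X\setminus E}t$ is not invertible, one may never cancel paired factors freely, so each reduction step must be justified by an explicit projection identity rather than by a formal inversion — this is precisely the pitfall the Observation warns against. Once the monomial reduction is available it specializes, via the decomposition $\calB_i t^{i}=\calB_0(\chi_{X\setminus E}t)^{i}$ and $\calB_{-i}t^{-i}=\calB_0(t^{-1}\chi_{X\setminus E})^{i}$ already recorded, to give the multiplication rule for arbitrary elements $b_i t^{i}\cdot b_j t^{j}$ of $\calB$, which is exactly the arithmetic needed to realize $\calB$ as a partial algebraic crossed product of $\calB_0$ by $\Z$.
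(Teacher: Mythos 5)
Your proposal is correct, and it supplies precisely the computation the paper omits: the Observation is informal commentary, and the proof of the accompanying Lemma \ref{lemma-approx.t} is explicitly skipped as ``purely computational.'' The closed forms $(\chi_{X\setminus E}t)^{i}=p_i t^{i}$ and $(t^{-1}\chi_{X\setminus E})^{i}=t^{-i}p_i$ with $p_i=\chi_{X\setminus(E\cup T(E)\cup\cdots\cup T^{i-1}(E))}$ are exactly the right bookkeeping device; they immediately yield the source and range projections, and your reduction $(\chi_{X\setminus E}t)^{i+j}(\chi_{X\setminus E}t)^{-j}=p_{i+j}t^{i}$ versus $(\chi_{X\setminus E}t)^{i}=p_i t^{i}$ recovers each of the corrected equalities (with the left or right projection factors) listed in parts (iii) and (iv) of Lemma \ref{lemma-approx.t}, and matches the paper's identities $\calB_i t^i=\calB_0(\chi_{X\setminus E}t)^i$. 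The one small thing you do not address is the strict non-equality asserted in the Observation: your formulas show the two sides differ exactly when $p_{i+j}\neq p_i$, i.e.\ when $T^{i}(E)\cup\cdots\cup T^{i+j-1}(E)\not\subseteq E\cup\cdots\cup T^{i-1}(E)$, which holds in the situations of interest (e.g.\ $\mu$ full and atomless forces the translates $T^k(E)$ to be ``new'' sets) but is not automatic for an arbitrary pair $(X,T,E)$; since the paper also leaves this implicit, it is a cosmetic rather than substantive omission.
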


From now on for $i > 0$, we will write $(\chi_{X \backslash E} t)^{-i}$ for the element $(t^{-1} \chi_{X \backslash E})^i$. We will also understand that $(\chi_{X \backslash E} t)^0$ is $1$.

\begin{lemma}\label{lemma-approx.t}
Fix $i \geq j > 0$. We have the following rules:
\begin{enumerate}[i)]
\item $(\chi_{X \backslash E} t)^i = (\chi_{X \backslash E} t)^{i-j} (\chi_{X \backslash E} t)^j = (\chi_{X \backslash E} t)^j (\chi_{X \backslash E} t)^{i-j}$.
\item $(\chi_{X \backslash E} t)^{-i} = (\chi_{X \backslash E} t)^{-i+j} (\chi_{X \backslash E} t)^{-j} = (\chi_{X \backslash E} t)^{-j} (\chi_{X \backslash E} t)^{-i+j}$.
\item $(\chi_{X \backslash E} t)^i \neq (\chi_{X \backslash E} t)^{i+j} (\chi_{X \backslash E} t)^{-j} \neq (\chi_{X \backslash E} t)^{-j} (\chi_{X \backslash E} t)^{i+j} \neq (\chi_{X \backslash E} t)^i$, but: we have the first equality when multiplied (to the left) by the projection $\chi_{X \backslash (E \cup \cdots \cup T^{i+j-1}(E))}$; we have the second equality when multiplied (to the left) by the projection $\chi_{X \backslash (T^{-j}(E) \cup \cdots \cup T^{i+j-1}(E))}$; we have the third equality when multiplied (to the left) by the projection $\chi_{X \backslash (T^{-j}(E) \cup \cdots \cup T^{-1}(E))}$.
\item $(\chi_{X \backslash E} t)^{-i} \neq (\chi_{X \backslash E} t)^{-i-j} (\chi_{X \backslash E} t)^j \neq (\chi_{X \backslash E} t)^j (\chi_{X \backslash E} t)^{-i-j} \neq (\chi_{X \backslash E} t)^{-i}$, but: we have the first equality when multiplied (to the right) by the projection $\chi_{X \backslash (T^{-j}(E) \cup \cdots \cup T^{-1}(E))}$; we have the second equality when multiplied (to the right) by the projection $\chi_{X \backslash (T^{-j}(E) \cup \cdots \cup T^{i+j-1}(E))}$; we have the third equality when multiplied (to the right) by the projection $\chi_{X \backslash (E \cup \cdots \cup T^{i+j-1}(E))}$.
\end{enumerate}
\end{lemma}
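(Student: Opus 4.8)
The plan is to reduce every assertion to closed-form expressions for the powers of $s := \chi_{X\backslash E}\,t$ (with the convention $s^{-i} := (t^{-1}\chi_{X\backslash E})^i = (s^i)^*$), obtained by repeatedly applying the identity $t^m\chi_U t^{-m} = \chi_{T^m(U)}$ together with $\chi_U\chi_V = \chi_{U\cap V}$. A straightforward induction gives, for every $i>0$,
\[
s^i = \chi_{X\backslash(E\cup T(E)\cup\cdots\cup T^{i-1}(E))}\,t^i, \qquad s^{-i} = \chi_{X\backslash(T^{-1}(E)\cup\cdots\cup T^{-i}(E))}\,t^{-i},
\]
the second being the adjoint of the first. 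Once these formulas are available, each claimed (in)equality becomes a comparison of two characteristic functions multiplying a fixed power of $t$, and the whole problem reduces to elementary bookkeeping of which translates $T^k(E)$ are deleted.

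Rules (i) and (ii) then require no real work: $s^{i-j}s^j$ and $s^j s^{i-j}$ are literally products of same-sign powers of the single element $s$, so associativity gives $s^{i-j}s^j = s^i = s^j s^{i-j}$ (the case $i=j$ being covered by $s^0 = 1$), and (ii) is the analogous statement for the single element $s^* = t^{-1}\chi_{X\backslash E}$. The content of the lemma lies in (iii) and (iv), where mixed-sign products appear and the non-invertibility of $s$ defeats cancellation. For (iii) I would compute the two mixed products directly; using the closed forms one finds
\[
s^{i+j}s^{-j} = \chi_{X\backslash(E\cup\cdots\cup T^{i+j-1}(E))}\,t^i, \qquad s^{-j}s^{i+j} = \chi_{X\backslash(T^{-j}(E)\cup\cdots\cup T^{i-1}(E))}\,t^i,
\]
where in each case the two characteristic functions produced collapse into one because one support contains the other. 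Comparing with $s^i = \chi_{X\backslash(E\cup\cdots\cup T^{i-1}(E))}\,t^i$ shows that all three elements carry the same $t^i$ but distinct deletion-sets, whence they differ in general.

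Each asserted equality of (iii) is then verified by left-multiplying by the stated projection and checking a single containment of clopen sets: multiplication by $\chi_{X\backslash(E\cup\cdots\cup T^{i+j-1}(E))}$ absorbs the smaller deletion-set $E\cup\cdots\cup T^{i-1}(E)$ and forces $s^i$ to agree with $s^{i+j}s^{-j}$; multiplication by $\chi_{X\backslash(T^{-j}(E)\cup\cdots\cup T^{i+j-1}(E))}$ forces $s^{i+j}s^{-j}$ and $s^{-j}s^{i+j}$ to agree; and multiplication by $\chi_{X\backslash(T^{-j}(E)\cup\cdots\cup T^{-1}(E))}$ forces $s^{-j}s^{i+j}$ to agree with $s^i$. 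Finally, rule (iv) is obtained from (iii) by applying the involution: taking adjoints of the three identities of (iii) turns each left-multiplication by a self-adjoint projection into the corresponding right-multiplication and interchanges $s^{i+j}s^{-j}\leftrightarrow s^js^{-i-j}$ and $s^{-j}s^{i+j}\leftrightarrow s^{-i-j}s^j$, producing exactly the three statements of (iv) with the projections attached on the right as listed. The only genuine obstacle throughout is keeping track of the index ranges of the deleted translates $T^k(E)$; once the closed forms above are in place, every individual step is an elementary inclusion of clopen subsets of $X$.
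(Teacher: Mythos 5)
Your proof is correct: the closed forms $(\chi_{X\backslash E}t)^i=\chi_{X\backslash(E\cup\cdots\cup T^{i-1}(E))}t^i$ and $(\chi_{X\backslash E}t)^{-i}=\chi_{X\backslash(T^{-1}(E)\cup\cdots\cup T^{-i}(E))}t^{-i}$, the two mixed products in (iii), and the passage to (iv) by applying the involution all check out, and the paper itself omits the proof precisely on the grounds that it is ``purely computational,'' which is exactly the computation you supply. The only cosmetic caveat is that the ``$\neq$'' assertions should be read as ``not equal in general'' (for special choices of $E$ and $T$ the deletion sets could coincide), which you correctly acknowledge with ``they differ in general.''
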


The proof of Lemma \ref{lemma-approx.t} is purely computational, so we will omit it. From now on, we will make use of it without any further reference.

Note that each $\calB_i, \calB_{-i}$ is an ideal of $\calB_0$. Let us define the basic map of the partial action of $\Z$ on $\calB_0$ as conjugation by $\chi_{X \backslash E} t$:
$$\varphi_1 \colon \calB_{-1} \to \calB_1, \quad \varphi_1(b_{-1}) = (\chi_{X \setminus E}t) b_{-1} (\chi_{X \setminus E}t)^*,$$
which is a $*$-isomorphism between $\calB_{-1} = \chi_{X \setminus T^{-1}(E)} \calB_0$ and $\calB_1 = \chi_{X \setminus E} \calB_0$ with inverse given by conjugation by $(\chi_{X \backslash E}t)^{-1} = t^{-1}\chi_{X \backslash E}$. Note that since $b_{-1} \in \calB_{-1}$, $b_{-1} = \chi_{X \backslash T^{-1}(E)} b_{-1}$. In general for $i \neq 0$ we have a $*$-isomorphism $\varphi_i$ from $\calB_{-i}$ onto $\calB_i$ which is given by conjugation by $(\chi_{X\setminus E}t)^i$. Using these maps we build a partial action $\varphi$ of $\Z$ on $\calB_0$, and we get the following result.

\begin{proposition}\label{proposition-cross.prod.B0}
There is a canonical $*$-isomorphism $\calB_0 \rtimes_{\varphi} \Z \cong \calB$ given by
$$\Psi \colon \calB_0 \rtimes_{\varphi} \Z \to \calB, \quad \Psi\big( \sum_{i \in \Z} b_i \delta_i \big) = \sum_{i \in \Z} b_i (\chi_{X \backslash E} t)^i = \sum_{i \in \Z}b_it^i,$$
where $b_i \in \calB_i$ for $i \in \Z$. %Recall that we are using the notation $(\chi_{X \backslash E}t)^i = (t^{-1} \chi_{X \backslash E})^{-i}$ for $i < 0$, and $(\chi_{X \backslash E} t)^0 = 1$.
\end{proposition}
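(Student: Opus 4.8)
The plan is to verify directly that the explicitly defined linear map $\Psi$ is unital, bijective, multiplicative and $*$-preserving, and therefore a $*$-isomorphism. It is convenient to write $\calB_i = \chi_{Q_i}\calB_0$, where $Q_i$ is the clopen set appearing in the definition of $\calB_i$ (with $Q_0 = X$), and to record that $(\chi_{X\backslash E}t)^i = \chi_{Q_i}t^i$ and hence $(\chi_{X\backslash E}t)^{-i} = t^{-i}\chi_{Q_i}$ for every $i\in\Z$, as one checks directly (cf. Lemma \ref{lemma-approx.t}), together with $T^i(Q_{-i}) = Q_i$. Bijectivity is then immediate: since $b_i\in\calB_i = \chi_{Q_i}\calB_0$ we have $b_i(\chi_{X\backslash E}t)^i = b_it^i$, so $\Psi$ carries the homogeneous summand $\calB_i\delta_i$ onto $\calB_it^i$ by $b_i\delta_i\mapsto b_it^i$; as $t$ is invertible in $\calA$, this is a bijection on each summand, and the direct sum decomposition $\calB = \bigoplus_{i\in\Z}\calB_it^i$ recorded above upgrades it to a linear isomorphism. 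Unitality holds because $\Psi(1\cdot\delta_0) = 1$.

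The computational heart of the argument is the identity $\varphi_i = \alpha_i|_{\calB_{-i}}$, valid for every $i\in\Z$: for $c\in\calB_{-i}$,
$$\varphi_i(c) = (\chi_{X\backslash E}t)^i\,c\,(\chi_{X\backslash E}t)^{-i} = \chi_{Q_i}\,t^i c t^{-i}\,\chi_{Q_i} = \chi_{Q_i}\,\alpha_i(c) = \alpha_i(c),$$
the last step because $c$ is supported on $Q_{-i}$, so $\alpha_i(c)$ is supported on $T^i(Q_{-i}) = Q_i$. Granting this, multiplicativity follows by bilinearity from a check on monomials: for $a\in\calB_n$, $b\in\calB_m$,
$$\Psi\big((a\delta_n)(b\delta_m)\big) = \varphi_n\big(\varphi_{-n}(a)\,b\big)\,t^{n+m} = \alpha_n\big(\alpha_{-n}(a)\,b\big)\,t^{n+m} = a\,\alpha_n(b)\,t^{n+m},$$
which agrees with $\Psi(a\delta_n)\Psi(b\delta_m) = (at^n)(bt^m) = a\,\alpha_n(b)\,t^{n+m}$. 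The same identity yields $*$-compatibility:
$$\Psi\big((a\delta_n)^*\big) = \varphi_{-n}(a^*)\,t^{-n} = \alpha_{-n}(a^*)\,t^{-n} = (at^n)^* = \Psi(a\delta_n)^*,$$
where $a^*\in\calB_n$ because $\calB_n$ is self-adjoint, so it lies in the domain of $\varphi_{-n}$.

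The only genuinely delicate point — and the main obstacle — is the bookkeeping of domains intrinsic to partial crossed products: one must ensure that $\varphi_{-n}(a)\,b$ lies in $\calB_{-n}$ (the domain of $\varphi_n$) and that the resulting coefficient $\varphi_n(\varphi_{-n}(a)\,b)$ lies in $\calB_{n+m}$, so that $\Psi$ is applied to a legitimate element of $\calB_0\rtimes_\varphi\Z$. The first inclusion is clear, as $\calB_{-n}$ is an ideal of $\calB_0$ with $\varphi_{-n}(a)\in\calB_{-n}$ and $b\in\calB_m\subseteq\calB_0$. The second is most cleanly obtained from the fact that $\calB$ is a subalgebra of $\calA$: the product $(at^n)(bt^m) = a\,\alpha_n(b)t^{n+m}$ lies in $\calB = \bigoplus_i\calB_it^i$ and is homogeneous of degree $n+m$, forcing its coefficient $a\,\alpha_n(b)$ into $\calB_{n+m}$. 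This simultaneously pins down the target ideal and confirms that the two displayed products coincide. With all four properties established, $\Psi$ is a bijective unital $*$-homomorphism, hence the claimed $*$-isomorphism.
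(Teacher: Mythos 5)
Your proof is correct, and its overall strategy matches the paper's (whose proof is declared ``routine'' apart from multiplicativity): one verifies directly that $\Psi$ is a bijective unital $*$-homomorphism, the only substantive point being that the coefficient of a product of monomials lands in the correct ideal $\calB_{i+j}$. Where you genuinely diverge is in how that key fact and the ensuing computation are handled. The paper establishes $b_i T^i(b_j)\in\calB_{i+j}$ by a case-by-case analysis using the arithmetic of the truncated powers $(\chi_{X\backslash E}t)^{\pm i}$ from Lemma \ref{lemma-approx.t}, and then carries those powers explicitly through the multiplicativity computation. You instead (a) observe that $\varphi_i$ coincides with $\alpha_i$ on its domain $\calB_{-i}$ --- which collapses the multiplicativity and $*$-compatibility checks to one line each --- and (b) deduce $a\,\alpha_n(b)\in\calB_{n+m}$ from the fact that $\calB$ is a graded subalgebra of $\calA$ with homogeneous components $\calB_i t^i$, so that $(at^n)(bt^m)$, being an element of $\calB$ homogeneous of degree $n+m$, must have its coefficient in $\calB_{n+m}$. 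This buys a cleaner argument that avoids the case analysis entirely; the only cost is that it leans on the decomposition $\calB=\bigoplus_{i\in\Z}\calB_i t^i$ asserted just before the proposition, but the paper's proof relies on that decomposition as well (for the well-definedness and bijectivity of $\Psi$), so nothing is lost. You are also more careful than the paper in flagging and discharging the domain bookkeeping ($\varphi_{-n}(a)b\in\calB_{-n}$ and $\varphi_n(\varphi_{-n}(a)b)\in\calB_{n+m}$) required for the partial crossed product formulas to make sense.
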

\begin{proof}
Routine. The only nontrivial thing may be to check that the products are preserved. The key observation here is that the product $b_i T^i(b_j)$ belongs to $\calB_{i+j}$ for any integer values of $i,j$; this follows by a case-by-case analysis using Lemma \ref{lemma-approx.t}. After that, a direct computation shows that the products are indeed preserved under $\Psi$:
\begin{align*}
\Psi((b_i \delta_i) & (b_j \delta_j)) = \Psi( \varphi_i( \varphi_{-i}(b_i) b_j ) \delta_{i+j} ) = \varphi_i( \varphi_{-i}(b_i) b_j ) (\chi_{X \backslash E} t)^{i+j} \\
& = (\chi_{X \backslash E}t)^i (\chi_{X \backslash E}t)^{-i} b_i (\chi_{X \backslash E}t)^i b_j (\chi_{X \backslash E}t)^{-i} (\chi_{X \backslash E} t)^{i+j} \\
& = b_iT^i(b_j) (\chi_{X \backslash E} t)^{i+j} = b_iT^i(b_j) t^{i+j} = \Psi(b_i \delta_i) \Psi(b_j \delta_j).\hfill\qedhere
\end{align*}
\end{proof}

%From Proposition \ref{Ch2-proposition-cross.prod.B0} we can deduce that we are approximating our crossed product $*$-algebra $C_K(X) \rtimes_T \Z$ by a \textit{partial} crossed product $*$-algebra $\calB_0 \rtimes_{\varphi} \Z$, where $\calB_0$ is obtained from $C_K(X)$ by means of our nonempty clopen set $E$ and the partition $\calP$ of the complement $X \backslash E$.

We summarize in the next lemma the structure of the elements belonging to the ideals $\calB_i$, $i \in \Z$, of $\calB_0$.

\begin{lemma}\label{lemma-structure.bi}
A nonzero element $b_i \in \calB_i$ ($i \in \Z$) can be written as an orthogonal linear combination  of characteristic functions of nonempty sets of the following four different types:
\begin{align*}
& (I) \quad T^{-N}(Z_{-N}) \cap \cdots \cap T^{M-1}(Z_{M-1}), \text{ with } Z_j \in \calP; \\
& (II) \quad T^{-N}(Z_{-N}) \cap \cdots \cap T^{s-2}(Z_{s-2}) \cap T^{s-1}(E), \text{ for } 0 \leq s \leq M, Z_j \in \calP; \\
& (III) \quad T^{-r}(E) \cap T^{-r+1}(Z_{-r+1}) \cap \cdots \cap T^{M-1}(Z_{M-1}), \text{ for } 0 \leq r \leq N, Z_j \in \calP; \\
& (IV) \quad T^{-r}(E) \cap T^{-r+1}(Z_{-r+1}) \cap \cdots \cap T^{s-2}(Z_{s-2}) \cap T^{s-1}(E), \text{ for } 0 \leq r \leq N, 0 \leq s \leq M, Z_j \in \calP;
\end{align*}
%$$(I) \quad T^{-N}(Z_{-N}) \cap \cdots \cap T^{M-1}(Z_{M-1}), \quad \text{ with } Z_i \in \calP$$
%$$(II) \quad T^{-N}(Z_{-N}) \cap \cdots \cap T^{s-2}(Z_{s-2}) \cap T^{s-1}(E), \quad \text{ for } 0 \leq s \leq M \text{ and } Z_i \in \calP$$
%$$(II) \quad T^{-r}(E) \cap T^{-r+1}(Z_{-r+1}) \cap \cdots \cap T^{M-1}(Z_{M-1}), \quad \text{ for } 0 \leq r \leq N \text{ and } Z_i \in \calP$$
%$$(IV) \quad T^{-r}(E) \cap T^{-r+1}(Z_{-r+1}) \cap \cdots \cap T^{s-2}(Z_{s-2}) \cap T^{s-1}(E), \quad \text{ for } 0 \leq r \leq N, 0 \leq s \leq M \text{ and } Z_i \in \calP$$
for some $N, M \geq 0$, where if $i < 0$ then $N \geq -i$, and $r \geq -i$ in (III) and (IV), and if $i > 0$ then $M \geq i$, and $s \geq i$ in (II) and (IV).
\end{lemma}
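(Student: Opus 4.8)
The plan is to argue entirely inside the commutative algebra $\calB_0$, using that $\{E\} \cup \calP$ is a partition of \emph{all} of $X$ (not just of $X \setminus E$). For $N,M \geq 0$ consider the cylinder sets
$$C(W_{-N}, \dots, W_{M-1}) := \bigcap_{j=-N}^{M-1} T^j(W_j), \qquad W_j \in \{E\}\cup\calP ,$$
which form a partition of $X$ into (possibly empty) clopen sets, since $\sum_{W \in \{E\}\cup\calP}\chi_{T^j(W)} = 1$ for every $j$. The four types $(I)$--$(IV)$ are exactly the cylinders \emph{truncated} so that coordinates are specified only between the last $E$ occurring at a negative position and the first $E$ occurring at a nonnegative position; so the whole statement reduces to writing $b_i$ as a combination of full cylinders and then grouping them according to these two distinguished $E$-coordinates.

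First I would invoke Lemma \ref{lemma-comm.B0}: the element $b_i \in \calB_0$ is a finite $K$-linear combination of $\chi_X$ and projections of the form \eqref{equation-projections.B0}. Each of the latter is already a cylinder in the above coordinates (with all specified entries in $\calP$), and $\chi_X$ is the sum of all cylinders; hence, taking the window $[-N, M-1]$ large enough to contain every index that occurs and with $M \geq i$ (when $i>0$) resp. $N \geq -i$ (when $i<0$), refinement yields a finite expansion $b_i = \sum_{\vec W} \lambda_{\vec W}\, \chi_{C(\vec W)}$ into full cylinders. Next I would use the ideal condition: since $\calB_i = \chi_{X \setminus (E \cup \cdots \cup T^{i-1}(E))}\calB_0$ for $i>0$ (and the analogous support projection for $i<0$), and cylinders are pairwise orthogonal, multiplying this expansion by the support projection kills every cylinder whose central coordinates $W_0, \dots, W_{i-1}$ (resp. $W_{-1}, \dots, W_i$) are not all in $\calP$. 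One may therefore assume every surviving $C(\vec W)$ has these central coordinates forced into $\calP$.

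The heart of the proof is the regrouping. For a surviving cylinder let $s-1$ be the least $j \geq 0$ with $W_j = E$ if one lies in the window (``right cap''), and $-r$ the greatest $j \leq -1$ with $W_j = E$ if one lies in the window (``left cap''); by minimality, maximality and the support condition, all coordinates strictly between $-r$ and $s-1$ are then in $\calP$. I would group together all full cylinders sharing the same data on the capped range --- the positions and the intervening $\calP$-values --- and sum. On each capped side the exterior coordinates are free and range over all of $\{E\}\cup\calP$, so the identity $\sum_W \chi_{T^j(W)} = 1$ erases the constraint there, and the group sums to the characteristic function of a truncated cylinder: type $(IV)$ when both caps are present, $(II)$ or $(III)$ when only one is (the un-capped side being specified out to the window end), and $(I)$ when neither $E$ appears. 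Distinct groups disagree at a cap position or at an intervening $\calP$-coordinate, so the truncated cylinders are pairwise disjoint; discarding the empty ones gives the required orthogonal combination. The parameter ranges now read off from the support condition: for $i>0$ the right cap cannot occur before position $i$ because $W_0, \dots, W_{i-1}\in\calP$, which forces $s \geq i$ (and $M \geq i$), and symmetrically $r \geq -i$, $N \geq -i$ when $i<0$.

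The main obstacle I anticipate is organizational rather than conceptual: one must arrange the truncation bookkeeping so that summing out the free exterior coordinates genuinely returns the unconstrained truncated cylinder, and so that the finitely many grouping profiles yield \emph{pairwise disjoint} sets --- it is this disjointness that makes the decomposition orthogonal and hence additive for the later rank computations. Once the cylinder formalism and the identity $\sum_{W \in \{E\}\cup\calP}\chi_{T^j(W)} = 1$ are set up, each verification is a routine finite check, and the only delicate point is matching the indices $N, M, r, s$ of the four types to the positions of the two capping $E$'s.
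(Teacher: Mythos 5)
Your proposal follows essentially the same route as the paper: both start from Lemma \ref{lemma-comm.B0} and refine using the partition $\{E\}\cup\calP$ of $X$, organizing the resulting pieces by the positions of the $E$'s nearest the origin. The paper does the refinement adaptively --- it expands each $\chi_S$ outward one coordinate at a time and simply stops expanding on a side as soon as an $E$ is placed there --- so that each $\chi_S$ individually becomes a sum of truncated cylinders, each carrying the single coefficient $\lambda_S$, and the only cross-term task is to collect coinciding sets coming from different $S$. Your version refines all the way down to full cylinders and then coarsens back up; this makes the pairwise disjointness of the resulting sets cleaner (distinct profiles are unions of disjoint collections of full cylinders from one common partition), but it introduces one claim that you do not address and that is genuinely needed: for a group to sum to a scalar multiple of the characteristic function of the truncated cylinder, the coefficient $\lambda_{\vec W}$ must be \emph{constant} on the group. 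Your text only verifies the set-theoretic identity $\sum_{\vec W \in \mathrm{group}}\chi_{C(\vec W)} = \chi_{\mathrm{truncated}}$; if the coefficients varied over a group, summing out the exterior coordinates would not produce a term of type (I)--(IV) at all.

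The claim is true, but its proof uses the specific shape of the sets in \eqref{equation-projections.B0}. The coefficient of $C(\vec W)$ is $\lambda_0 + \sum_{S\,:\,C(\vec W)\subseteq S}\lambda_S$, and $C(\vec W)\subseteq S$ holds if and only if the (contiguous) range $[-r_S,\,s_S-1]$ of $S$ lies strictly between the two cap positions of $\vec W$ and the letters of $\vec W$ there match those of $S$. The key point is that a range $[-r_S,\,s_S-1]$ with $r_S,s_S\ge 0$ cannot reach past a cap without containing it: since the left cap sits at a position $\le -1$ and the right cap at a position $\ge 0$, any such interval extending beyond a cap must contain that cap position, where $S$ prescribes a $\calP$-letter and $\vec W$ has $E$, forcing $C(\vec W)\cap S=\emptyset$. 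Hence containment in each $S$, and therefore the coefficient, depends only on the capped-range data defining the group. With this observation supplied, your argument is complete and produces the same decomposition as the paper's; the index constraints ($N\ge -i$, $r\ge -i$ for $i<0$, and $M\ge i$, $s\ge i$ for $i>0$) then follow from the support projections exactly as you indicate.
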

\begin{proof}
Due to Lemma \ref{lemma-comm.B0}, we can write a given $b_i \in \calB_i$ as a sum
$$b_i = \lambda_0 + \sum_S \lambda_S \chi_S$$
where the sets $S$ are of the form \eqref{equation-projections.B0}, and $\lambda_0,\lambda_S \in K$. Note that if $i < 0$ then we can take $\lambda_0 = 0$ and all the sets $S$ as in \eqref{equation-projections.B0} having $r \geq -i$, and similarly if $i > 0$ we can take $\lambda_0 = 0$ and all the sets $S$ as in \eqref{equation-projections.B0} having $s \geq i$.

Take $N$ to be the maximum value of the $r$'s while running through the sets $S$, and $M$ to be the maximum value of the $s$'s. The idea is to expand the element $1$ as an orthogonal sum of characteristic functions using the partition $\calP$. So for a fixed set $S = T^{-r}(Z_{-r}) \cap \cdots \cap T^{s-1}(Z_{s-1})$ with $0 \leq r < N$, we can decompose its characteristic functions as an orthogonal sum as follows:
$$\chi_S = \chi_{T^{-r-1}(E) \cap S} + \sum_{Z \in \calP} \chi_{T^{-r-1}(Z_{-r-1}) \cap S}.$$
By further expanding the set $T^{-r-1}(E) \cap S$ to the right, we will end up with a sum of terms of types (III) and (IV); by expanding $T^{-r-1}(Z_{-r-1}) \cap S$ to both sides we will end up with a sum of terms of all types. Of course, we discard the empty sets that appear in this process. Also, if one of the terms appearing in the expansion of $S$ coincides with another term appearing in the expansion of some other set $S'$, we simply collect them by summing the corresponding coefficients. Proceeding in this way, we will end up with an orthogonal sum of the desired form.
\end{proof}

\subsection{Quasi-partitions and a \texorpdfstring{$*$}{}-representation of \texorpdfstring{$\calB$}{}}\label{subsection-quasi.partitions.representation.B}

Let $X$ be an infinite, totally disconnected metrizable compact space, $T$ a homeomorphism of $X$, and $\mu$ a full ergodic $T$-invariant probability measure on $X$. We apply the previous considerations given in Lemma \ref{lemma-Rokhlin} to the clopen set $E$, and we add into the picture the partition $\calP$ of $X \backslash E$. That is, we consider the coarsest quasi-partition $\ol{\calP}$ of $X$ such that
\begin{enumerate}[a)]
\item $\calP \cup \{ E \} \precsim \ol{\calP}$ and $\{Y_k^l\mid Y_k^l \ne \emptyset \} \precsim \ol{\calP}$, where $\{Y_k^l\mid Y_k^l \ne \emptyset \}$ is the quasi-partition introduced above in Lemma \ref{lemma-Rokhlin}, and
\item if $\ol{Z} \in \ol{\calP}$ and $\ol{Z} \subseteq Y_k^0$ for some $k$, then all its translates belong to the quasi-partition too, that is $T^i(\ol{Z}) \in \ol{\calP}$ for every $1 \leq i \leq k-1$.
\end{enumerate}
$\ol{\calP}$ can be obtained by refining, using $\calP \cup \{E \}$, the quasi-partition $\{Y_k^l\mid Y_k^l \ne \emptyset \}$. It turns out that all the characteristic functions $\chi_{\ol{Z}}$, with $\ol{Z} \in \ol{\calP}$, belong to $\calB$.

\begin{lemma}\label{lemma-quasi.partition}
The quasi-partition $\ol{\calP}$ above consists exactly of all the nonempty subsets of $X$ of the form
\begin{equation}\label{equation-Wexpression}
W = E\cap T^{-1}(Z_1) \cap T^{-2}(Z_2) \cap \cdots \cap T^{-k+1}(Z_{k-1})\cap T^{-k}(E)
\end{equation}
for some $k\ge 1$ and some $Z_1,\dots , Z_{k-1}\in \calP$, together with all their $T$-translates $T^l(W)$, $0 \leq l \leq k-1$. Moreover, each characteristic function $\chi_{\ol{Z}}$ belongs to $\calB$ for any $\ol{Z} \in \ol{\calP}$.
\end{lemma}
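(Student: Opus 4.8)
The plan is to prove that the explicit family
$$\calQ := \{\, T^l(W) \mid W \neq \emptyset \text{ of the form \eqref{equation-Wexpression} for some } k\ge1,\ 0\le l\le k-1 \,\}$$
coincides with the coarsest admissible quasi-partition $\ol{\calP}$, and then to verify the membership claim directly. First I would record the geometry of the sets $W$: since each $Z_i\in\calP$ lies in $X\setminus E$, a point $x\in W$ satisfies $x\in E$, $T^i(x)\notin E$ for $1\le i\le k-1$, and $T^k(x)\in E$, so $W\subseteq Y_k^0=r_E^{-1}(k)$; moreover, resolving $X\setminus E=\bigsqcup_{Z\in\calP}Z$ at each intermediate coordinate shows that, for fixed $k$, the nonempty $W$ partition $Y_k^0$ according to the $\calP$-itinerary of the excursion. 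Applying the bijections $T^l$ and using that the $Y_k^l$ are pairwise disjoint with $\mu\big(\bigsqcup_{k,l}Y_k^l\big)=1$ (Lemma \ref{lemma-Rokhlin}), I conclude that $\calQ$ is a quasi-partition refining $\{Y_k^l\mid Y_k^l\ne\emptyset\}$.

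Next I would check that $\calQ$ satisfies (a) and (b). Each piece $T^l(W)$ is contained in $E$ when $l=0$ and in the set $Z_l\in\calP$ when $1\le l\le k-1$, so $\calP\cup\{E\}\precsim\calQ$; together with the refinement of $\{Y_k^l\}$ this gives (a), and (b) holds by the very construction of $\calQ$ from base pieces and their forward translates. The heart of the matter is the reverse statement: every quasi-partition $\calQ'$ obeying (a) and (b) refines $\calQ$. For a piece $\ol Z'\subseteq Y_k^0$ this is immediate, since by (b) its forward translates lie in $\calQ'$ and, being contained in $X\setminus E$, sit inside single $\calP$-sets by (a), which forces $\ol Z'\subseteq W$. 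For a piece $\ol Z'\subseteq Y_k^l$ with $l\ge1$ I would argue that it is itself a translate of a base piece: the base pieces of $\calQ'$ cover $Y_k^0$ up to a null set, so by (b) their $T^l$-translates belong to $\calQ'$ and cover $Y_k^l$ up to a null set; as $\ol Z'$ is nonempty clopen and $\mu$ is full, it cannot be squeezed into the null complement, hence it coincides with one such translate and so lies in some $T^l(W)$. This yields $\calQ\precsim\calQ'$ for every admissible $\calQ'$, so $\calQ$ is the coarsest one and equals $\ol{\calP}$.

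For the membership statement I would write each $\ol Z=T^l(W)$ as the intersection of $T^l(E)$, the $\calP$-window $N=\bigcap_{i=1}^{k-1}T^{l-i}(Z_i)$, and $T^{l-k}(E)$, and then expand the two end factors via $\chi_{T^l(E)}=1-\sum_{Z}\chi_{T^l(Z)}$ and $\chi_{T^{l-k}(E)}=1-\sum_{Z'}\chi_{T^{l-k}(Z')}$. This rewrites $\chi_{\ol Z}$ as a $\pm$-combination of characteristic functions of contiguous $\calP$-windows with indices inside $[\,l-k,\,l\,]$. When $1\le l\le k-1$ every such window straddles the index $0$, hence is of the form \eqref{equation-projections.B0} and lies in $\calB_0$ by Lemma \ref{lemma-comm.B0}; when $l=0$ the two windows not containing $0$ end at index $-1$ and are realized as $g g^*$ with $g=(\chi_{Z_1}t)^*\cdots(\chi_{Z_{m}}t)^*=\chi_{\bigcap_{j}T^{-j}(Z_j)}t^{-m}$, exactly as in the proof of Lemma \ref{lemma-comm.B0}. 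Since $\chi_E=1-\sum_{Z\in\calP}\chi_Z\in\calB$ as well, all the building blocks lie in $\calB$, and therefore so does each $\chi_{\ol Z}$.

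The step I expect to be the main obstacle is the coarsest direction on the upper levels $l\ge1$: conditions (a) and (b) do not directly pin down the full $\calP$-itinerary of a level-$l$ piece, since (b) only propagates information forward from the base. The essential input there is the fullness of $\mu$, which guarantees that a nonempty clopen set has positive measure; this upgrades the merely measure-theoretic covering of $Y_k^l$ by translated base pieces into an honest identification of clopen sets, forcing the level-$l$ part of any admissible quasi-partition to consist precisely of the translates $T^l(W)$.
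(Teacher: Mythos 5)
Your identification of $\ol{\calP}$ with the family of translates of the sets \eqref{equation-Wexpression} follows the same route as the paper: exhibit the explicit family, check it is an admissible quasi-partition refining $\calP\cup\{E\}$ and $\{Y_k^l\}$, then show every admissible quasi-partition refines it. You are in fact more careful than the paper on one point: the paper's minimality argument only treats pieces contained in $Y_k^0$, leaving the level-$l$ pieces ($l\ge 1$) implicit, whereas you supply the missing step (the $T^l$-translates of the base pieces cover $Y_k^l$ up to a null set, and fullness of $\mu$ together with pairwise disjointness of the pieces forces a level-$l$ piece to coincide with one such translate). Where you genuinely diverge is the membership claim $\chi_{\ol{Z}}\in\calB$. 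The paper argues multiplicatively: it notes that $\chi_E=1-(\chi_{X\setminus E}t)(\chi_{X\setminus E}t)^*$ and $\chi_{T^{-1}(E)}=1-(\chi_{X\setminus E}t)^*(\chi_{X\setminus E}t)$ lie in $\calB$, writes
$$\chi_W=\chi_E\cdot\chi_{T^{-1}(Z_1)\cap\cdots\cap T^{-k+1}(Z_{k-1})}\cdot(t^{-1}\chi_{X\setminus E})^{k-1}\chi_{T^{-1}(E)}(\chi_{X\setminus E}t)^{k-1},$$
and then obtains the translates by conjugation, $\chi_{T^l(W)}=(\chi_{X\setminus E}t)^l\chi_W(t^{-1}\chi_{X\setminus E})^l$. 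You instead expand the two $E$-factors by inclusion--exclusion over the finite partition $\calP$ and reduce to finitely many contiguous $\calP$-windows, each of the form \eqref{equation-projections.B0} (containing the index $0$ when $1\le l\le k-1$, or ending at index $-1$ when $l=0$), so that Lemma \ref{lemma-comm.B0} applies directly. Both arguments are correct; the paper's is shorter once Lemma \ref{lemma-approx.t} is available, while yours stays entirely inside $\calB_0$ and avoids the conjugation bookkeeping, at the cost of the index check that the windows never drift entirely to one side of $0$ --- which is exactly the constraint built into \eqref{equation-projections.B0}, and which you verify correctly.
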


\begin{proof}
Let $\V$ denote the set of all the nonempty sets $W$ of the form \eqref{equation-Wexpression}, and let $\calP'$ be the family of all the translates of all $W \in \V$. For $W = E \cap T^{-1}(Z_1) \cap \cdots \cap T^{-k+1}(Z_{k-1}) \cap T^{-k}(E) \in \V$ we define $|W| := k$, the length of $W$. 
We will prove that $\calP' = \ol{\calP}$. We first show:
\begin{enumerate}[(1)]
\item $\calP'$ is a quasi-partition of $X$. Clearly, the sets in $\calP'$ are mutually disjoint since $\calP$ forms a 
partition of $X \backslash E$, and the nonempty sets of $\V$ form, for a fixed length $k$, a partition of $Y_k^0=r_E^{-1}(\{ k \})$. Indeed,
\begin{align*}
\bigsqcup_{\substack{W \in \V \\ |W| = k}} W & = \bigsqcup_{Z_1,...,Z_{k-1} \in \calP} E \cap T^{-1}(Z_1) \cap \cdots \cap T^{-k+1}(Z_{k-1})\cap T^{-k}(E) \\
& =  E \cap (X \backslash T^{-1}(E)) \cap \cdots \cap (X \backslash T^{-k+1}(E)) \cap T^{-k}(E) = Y_k^0.
\end{align*}
As a consequence, for a fixed $0 \leq l \leq k-1$, the $T^l$-translates of the $W \in \V$ having length $k$ form a partition of $Y_k^l = T^l(Y_k^0)$. Since, by Lemma \ref{lemma-Rokhlin}, the family $\{Y_k^l\mid Y_k^l \ne \emptyset \}$ 
forms a quasi-partition of $X$, this shows that $\calP'$ is a quasi-partition of $X$.
\item $\calP'$ refines $\calP \cup \{E \}$ and the family $\{Y_k^l\mid Y_k^l \ne \emptyset \}$. This is a direct consequence of part $(1)$.
\item For $\ol{Z} \in \calP'$ with $\ol{Z} \subseteq Y_k^0$ for some $k$, then $T^i(\ol{Z}) \in \calP'$ for each $1 \leq i \leq k-1$. By construction, all the sets $\ol{Z} \in \calP'$ with $\ol{Z} \subseteq Y_k^0$ for some $k$ are the $W \in \V$ having length $k$. It is then clear that all its translates $T^i(W) \in \calP'$ for $1 \leq i \leq k-1$.
\end{enumerate}
This shows that $\ol{\calP} \precsim \calP'$. To show that $\calP' \precsim \ol{\calP}$, we only have to check that if $Y' \subseteq Y_k^0$ is a nonempty clopen set such that for each $1 \leq i \leq k-1$ the translate $T^i(Y')$ is contained in one of the sets of the partition $\calP$, then $Y' \subseteq W$ for some $W \in \V$. But this is clear, since if $T^i(Y') \subseteq Z_i$ for $i=1,...,k-1$ where $Z_i \in \calP$, and $T^k(Y') \subseteq E$, then $Y' \subseteq E \cap T^{-1}(Z_1) \cap \cdots \cap T^{-k+1}(Z_{k-1}) \cap T^{-k}(E)$. Hence $\calP' = \ol{\calP}$.

We now check that $\chi_W$ belongs to $\calB$, where $W$ is as in \eqref{equation-Wexpression}. First observe that  $\chi_E = 1- (\chi_{X\setminus E} t)(\chi_{X\setminus E} t)^*$ and $\chi_{T^{-1}(E)}= 1- (\chi_{X\setminus E} t)^*(\chi_{X\setminus E} t)$ both belong to $\calB$. Now by Lemma \ref{lemma-comm.B0}, we have that $\chi_{T^{-1}(Z_1) \cap \cdots \cap T^{-k+1}(Z_{k-1})} \in \calB$ for $Z_1,Z_2,\dots , Z_{k-1} \in \calP$. Therefore
$$\chi_W = \chi_E \cdot \chi_{T^{-1}(Z_1) \cap \cdots \cap T^{-k+1}(Z_{k-1})} \cdot (t^{-1} \chi_{X\setminus E})^{k-1} \chi_{T^{-1}(E)} (\chi_{X\setminus E} t)^{k-1} \in \calB.$$
Also, for $1 \leq l \leq k-1$, observe that
$$\chi_{T^l(W)} = \chi_{X \backslash (E \cup T(E) \cup \dots \cup T^{l-1}(E))} \cdot \chi_{T^l(W)} = (\chi_{X \backslash E} t)^l \chi_W (t^{-1} \chi_{X \backslash E})^l,$$
and so $\chi_{T^l(W)} \in \calB$ too.
\end{proof}

\begin{proposition}\label{proposition-minimal.central}
For each $W \in \V$, we have $*$-isomorphisms
$$\chi_W \calB \chi_W \cong K, \quad \calB \chi_W \calB \cong M_{|W|}(K).$$
Moreover, the element $h_W := \sum_{l=0}^{|W|-1} \chi_{T^l(W)}$ is a unit in the two-sided ideal $\calB \chi_W \calB$, a central projection in $\calB$, and
$$h_W \calB \cong M_{|W|}(K).$$
In particular, $\chi_W$ is a minimal projection in $\calB$.%\footnote{For an idempotent $e$ in a ring $R$, we say that $e$ is \textit{minimal} if the ideal $eR$ is simple as a right $R$-module.}.
\end{proposition}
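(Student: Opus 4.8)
The plan is to exhibit the levels of the Rokhlin tower $W, T(W), \dots, T^{k-1}(W)$ (where $k := |W|$) as the matrix units of a copy of $M_k(K)$ sitting inside $\calB$, and then to read off all three isomorphisms from the computation of the corner $\chi_W\calB\chi_W$. Write $p_l := \chi_{T^l(W)}$ for $0 \le l \le k-1$; by Lemma \ref{lemma-quasi.partition} these are pairwise orthogonal projections of $\calB$ with $\sum_l p_l = h_W$, and directly from \eqref{equation-Wexpression} one has $W, T^k(W) \subseteq E$ whereas $T^l(W) \subseteq X\setminus E$ for $1 \le l \le k-1$. First I would verify that, for $0 \le l \le k-2$, the element $s_l := \chi_{T^{l+1}(W)}\,(\chi_{X\setminus E}\,t)$ lies in $\calB$ (being a product of $\chi_{T^{l+1}(W)} \in \calB$ with the generator $\chi_{X\setminus E}t$) and equals $\chi_{T^{l+1}(W)}\,t$, and that it is a partial isometry with $s_l^* s_l = p_l$, $s_l s_l^* = p_{l+1}$. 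Declaring $e_{ii} := p_i$, $e_{i+1,i} := s_i$, and defining the remaining $e_{ij}$ as the evident products of the $s_l$ and their adjoints produces a full system of $k\times k$ matrix units in $\calB$; one checks inductively that $e_{i0} = \chi_{T^i(W)}\,t^i$. Their $K$-span $\calM$ is then a copy of $M_k(K)$ with unit $h_W$.

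The crux, which I expect to be the main obstacle, is to prove $\chi_W\calB\chi_W = K\chi_W$. Using the crossed-product description of Proposition \ref{proposition-cross.prod.B0}, I would write a general element as $\sum_i b_i(\chi_{X\setminus E}t)^i$ with $b_i \in \calB_i \subseteq \calB_0$ and compress by $\chi_W$ on both sides. The key elementary computations are that $(\chi_{X\setminus E}t)^i\chi_W = 0$ for $i<0$ (since $\chi_{X\setminus E}\chi_W = 0$ as $W\subseteq E$) and for $i\ge k$ (since $T^k(W)\subseteq E$ is killed by $\chi_{X\setminus E}$), while $(\chi_{X\setminus E}t)^i\chi_W = \chi_{T^i(W)}t^i$ for $0\le i\le k-1$. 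Hence in $\chi_W\big(\sum_i b_i(\chi_{X\setminus E}t)^i\big)\chi_W$ the terms with $i<0$ or $i\ge k$ vanish outright, those with $1\le i\le k-1$ vanish because $\chi_W\chi_{T^i(W)} = 0$, and only $i=0$ survives, contributing $\chi_W b_0\chi_W$. Finally $\chi_W\calB_0 = K\chi_W$: by Lemma \ref{lemma-comm.B0} every spanning projection $\chi_S$ of $\calB_0$ has $S\subseteq X\setminus E$, so $\chi_W\chi_S = 0$ and only the scalar term of $b_0$ remains. This gives $\chi_W\calB\chi_W = K\chi_W \cong K$, the first isomorphism, and shows at once that $\chi_W$ is a minimal projection.

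With the corner in hand, everything transports along the matrix units. Since $e_{l0}$ is a partial isometry with $e_{l0}^* e_{l0} = p_0 = \chi_W$ and $e_{l0} e_{l0}^* = p_l$, conjugating gives $p_l\calB p_l = e_{l0}(\chi_W\calB\chi_W)e_{0l} = Kp_l$. For the one-sided pieces, writing $a = \sum_i b_i(\chi_{X\setminus E}t)^i$ and using the vanishing facts above yields $a\chi_W = \sum_{i=0}^{k-1} b_i\chi_{T^i(W)}t^i$, and since $\chi_{T^i(W)}b_i = \chi_{T^i(W)}b_i\chi_{T^i(W)} \in Kp_i$ this equals $\sum_i \lambda_i e_{i0}$. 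Thus $\calB\chi_W = \bigoplus_i Ke_{i0}$ is the first column of $\calM$ and, by taking adjoints, $\chi_W\calB = \bigoplus_i Ke_{0i}$ is the first row; multiplying, $\calB\chi_W\calB = (\calB\chi_W)(\chi_W\calB) = \bigoplus_{i,j} Ke_{ij} = \calM \cong M_{|W|}(K)$, the second isomorphism, with $h_W = \sum_l e_{ll}$ its unit.

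It remains to see that $h_W$ is central and that $h_W\calB \cong M_{|W|}(K)$. The set $\calB\chi_W\calB$ is by construction a two-sided ideal carrying the unit $h_W$; hence for every $a\in\calB$ both $h_Wa$ and $ah_W$ lie in it, and applying $h_W$ on the appropriate side gives $h_Wa = h_Wah_W = ah_W$, so $h_W$ is a central projection. Centrality yields $h_W\calB = h_W\calB h_W$, and since $p_l a p_m = e_{l0}(e_{0l}ae_{m0})e_{0m}$ with $e_{0l}ae_{m0}\in\chi_W\calB\chi_W = K\chi_W$, we get $p_l a p_m \in Ke_{lm}$; summing over $l,m$ shows $h_W\calB h_W\subseteq\calM$. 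Combined with $\calM = \calB\chi_W\calB \subseteq h_W\calB$, this gives $h_W\calB = \calM \cong M_{|W|}(K)$, which completes the argument.
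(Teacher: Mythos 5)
Your argument follows essentially the same route as the paper's proof: the matrix units you assemble from the partial isometries $s_l$ coincide with the paper's elements $e_{ij}(W)=(\chi_{X\setminus E}t)^i\chi_W(t^{-1}\chi_{X\setminus E})^j$, the computation of the corner $\chi_W\calB\chi_W$ via the decomposition $\calB=\bigoplus_i\calB_i(\chi_{X\setminus E}t)^i$ is the paper's computation, and your derivation of the centrality of $h_W$ from its being the unit of the two-sided ideal $\calB\chi_W\calB$ is a clean abstract alternative to the paper's direct check that $h_W$ commutes with $\chi_{X\setminus E}t$. There is, however, one false assertion, and it occurs exactly at the step you identify as the crux. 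You claim that every spanning projection $\chi_S\neq 1$ of $\calB_0$ furnished by Lemma \ref{lemma-comm.B0} has $S\subseteq X\setminus E$, and deduce $\chi_W\chi_S=0$. This is not true: the spanning family \eqref{equation-projections.B0} includes the sets with $s=0$, namely $S=T^{-r}(Z_{-r})\cap\cdots\cap T^{-1}(Z_{-1})$ (these arise as $(\chi_{Z_{-1}}t)^*\cdots(\chi_{Z_{-r}}t)^*(\chi_{Z_{-r}}t)\cdots(\chi_{Z_{-1}}t)$), and such a set has no factor from $\calP$ in position $0$, so it need not avoid $E$. Concretely, if $W=E\cap T^{-1}(Z^W_1)\cap\cdots\cap T^{-k+1}(Z^W_{k-1})\cap T^{-k}(E)$ with $k\geq 2$, then the spanning projection $\chi_S$ with $S=T^{-1}(Z^W_1)$ satisfies $\chi_W\chi_S=\chi_W\neq 0$.

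The conclusion $\chi_W\calB_0=K\chi_W$ is nevertheless correct, and the repair is short: for every $S$ as in \eqref{equation-projections.B0} one has $\chi_W\chi_S=\chi_{W\cap S}\in\{0,\chi_W\}$. Indeed, if $s\geq 1$ then $S\subseteq Z_0\subseteq X\setminus E$ while $W\subseteq E$, so $W\cap S=\emptyset$; if $s=0$ and $r\geq k$ then the factor $T^{-k}(Z_{-k})$ of $S$ is disjoint from the factor $T^{-k}(E)$ of $W$; and if $s=0$ and $1\leq r\leq k-1$ then $W\cap S$ equals $W$ when $Z_{-m}=Z^W_m$ for all $1\leq m\leq r$ and is empty otherwise. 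Hence $\chi_W\calB_0\subseteq K\chi_W$, which is all you need, and with this correction the remainder of your argument goes through as written.
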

\begin{proof}
Fix $W \in \V$. We will prove a more general statement, that is $\chi_{T^l(W)} \calB \chi_{T^l(W)} \cong K$ for all $0 \leq l \leq |W|-1$. Write again $\calB = \bigoplus_{i \in \Z} \calB_i t^i = \bigoplus_{i \in \Z} \calB_0 (\chi_{X \backslash E}t)^i$, so that $\chi_{T^l(W)} \calB \chi_{T^l(W)} = \bigoplus_{i \in \Z} \calB_0 \chi_{T^l(W)} (\chi_{X \backslash E}t)^i \chi_{T^l(W)}$. For $i > 0$, note that
$$\chi_{T^{l+i}(W)} \cdot \chi_{X \backslash (T^{l+(i-1)}(E) \cup \cdots \cup T^i(E))} = \chi_{T^{l+i}(W)},$$
and so
\begin{align*}
\chi_{T^l(W)} (\chi_{X \backslash E}t)^i \chi_{T^l(W)} & = \chi_{T^l(W)} \cdot \chi_{X \backslash (E \cup \cdots \cup T^{i-1}(E))} t^i \chi_{T^l(W)} \\
& = \chi_{T^l(W)} \cdot \chi_{T^{l+i}(W)} \cdot \chi_{X \backslash (E \cup \cdots \cup T^{i-1}(E))} t^i \\
& = \chi_{T^l(W)} \cdot \chi_{T^{l+i}(W)} \cdot \chi_{X \backslash (E \cup \cdots \cup T^{l+(i-1)}(E))} t^i = 0, \\
%\chi_{T^l(W) \cap T^l(X \backslash E)} \chi_{X \backslash (E \cup \cdots \cup T^{l+(i-1)}(E))} \chi_{T^{l+i}(W)} t^i = 0, \\
\chi_{T^l(W)} (t^{-1} \chi_{X \backslash E})^i \chi_{T^l(W)} & = \Big( \chi_{T^l(W)} (\chi_{X \backslash E}t)^i \chi_{T^l(W)} \Big) ^* = 0.
\end{align*}
Therefore $\chi_{T^l(W)} \calB \chi_{T^l(W)} = \chi_{T^l(W)} \calB_0$. Using Lemma \ref{lemma-comm.B0} we get that $\chi_{T^l(W)} \calB_0 = K \chi_{T^l(W)}$, so $\chi_{T^l(W)} \calB \chi_{T^l(W)} = K \chi_{T^l(W)} \cong K$.
%Write now $W = E \cap T^{-1}(Z_1) \cap \cdots \cap T^{-k+1}(Z_{k-1}) \cap T^{-k}(E)$ with $k = |W|$ and $Z_1,...,Z_{k-1} \in \calP$. By Lemma \ref{lemma-comm.B0}, $\calB_0$ is linearly spanned by $1$ and the projections of the form
%$$p = \chi_{T^{-r}(Z'_{-r}) \cap \cdots \cap Z'_0 \cap T(Z'_1) \cap \cdots \cap T^{s-1}(Z'_{s-1})} \quad \text{ with } Z'_{-r},..., Z'_{s-1} \in \calP \text{ and } r, s \geq 0.$$
%If $s \geq l$, we directly compute $p \cdot \chi_{T^l(W)} = 0$ since $\chi_{T^l(Z'_l) \cap T^l(W)} = 0$. If $0 \leq s < l$,
%$$p \cdot \chi_{T^l(W)} = \chi_{T^{-r}(Z'_{-r}) \cap \cdots \cap Z'_0 \cap \cdots \cap T^s(Z'_s)} \cdot \chi_{T^l(E) \cap T^{l-1}(Z_1) \cap \cdots \cap T^{l-k+1}(Z_{k-1}) \cap T^{l-k}(E)}.$$
%This is $0$ for $r \geq k-l$, and for $r < k-l$, it can be either $0$ or $\chi_{T^l(W)}$ again, since $\calP$ forms a partition of $X \backslash E$. 
%All these observations together imply that $\chi_{T^l(W)} \calB_0 = K \chi_{T^l(W)}$, so $\chi_{T^l(W)} \calB \chi_{T^l(W)} = K \chi_{T^l(W)} \cong K$.

Now, by means of previous computations, it is straightforward to see that for general $i,j \in \Z$, we have
\begin{equation}\label{equation-matr.unit}
(\chi_{X \backslash E} t)^i \chi_W (t^{-1} \chi_{X \backslash E})^j =
	\begin{cases}
		(\chi_{X \backslash E} t)^i \chi_W (t^{-1} \chi_{X \backslash E})^j & \text{ for } 0 \leq i,j \leq |W| - 1 \\
		0 & \text{ otherwise}
	\end{cases}.
\end{equation}
We then consider
$$e_{ij}(W) := (\chi_{X \backslash E} t)^i \chi_W (t^{-1} \chi_{X \backslash E})^j, \quad 0 \leq i,j \leq |W|-1.$$
Observe that $e_{ll}(W) = (\chi_{X \backslash E} t)^l \chi_W (t^{-1} \chi_{X \backslash E})^l = \chi_{T^l(W)}$ for $0 \leq l \leq |W|-1$, and that the set $\{ e_{ij}(W) \}$ is a complete system of matrix units for $\calB \chi_W \calB$. %Indeed, the defining relations for matrix units are satisfied:
%\begin{align*}
%e_{ij}(W) & e_{kl}(W) = (\chi_{X \backslash E} t)^i \chi_W (t^{-1} \chi_{X \backslash E})^j (\chi_{X \backslash E} t)^k \chi_W (t^{-1}\chi_{X \backslash E})^l \\
%& = (\chi_{X \backslash E} t)^i t^{-j} \chi_{T^j(W) \cap T^k(W)} \cdot \chi_{ X \backslash (E \cup \cdots \cup T^{\text{max}\{j,k\}-1}(E))} t^k (t^{-1}\chi_{X \backslash E})^l \\
%& = \delta_{j,k} (\chi_{X \backslash E} t)^i t^{-j} \chi_{T^j(W)} t^j (t^{-1}\chi_{X \backslash E})^l = \delta_{j,k} (\chi_{X \backslash E} t)^i \chi_{W} (t^{-1}\chi_{X \backslash E})^l = \delta_{j,k} e_{il}(W),
%\end{align*}
To prove that $h_W = \sum_{l=0}^{|W|-1} \chi_{T^l(W)} = \sum_{l=0}^{|W|-1} e_{ll}(W)$ is indeed a unit for $\calB \chi_W \calB$, we first use \eqref{equation-matr.unit} to write
\begin{align*}
\calB \chi_W \calB & = \bigoplus_{i,j \in \Z} \calB_0 (\chi_{X \backslash E} t)^i \chi_W \calB_0 (t^{-1} \chi_{X \backslash E})^j = \bigoplus_{i,j \in \Z} \calB_0 (\chi_{X \backslash E} t)^i \chi_W (t^{-1} \chi_{X \backslash E})^j \\
& = \bigoplus_{i ,j \geq 0}^{|W|-1} \calB_0 e_{ij}(W) = \bigoplus_{i ,j \geq 0}^{|W|-1} (\calB_0 e_{ii}(W)) e_{ij}(W) = \bigoplus_{i ,j \geq 0}^{|W|-1} K e_{ij}(W)
\end{align*}
where we have used that $\calB_0 e_{ii}(W) = K e_{ii}(W)$. It is now clear that $h_W$ is a unit for $\calB \chi_W \calB$. %, since for $0 \leq i,j \leq |W|-1$
%$$h_W \cdot e_{ij}(W) = \sum_{l=0}^{|W|-1} e_{ll}(W) e_{ij}(W) = \sum_{l=0}^{|W|-1} \delta_{l,i} e_{lj}(W) = e_{ij}(W)$$
%Applying the involution we get $e_{ij}(W) \cdot h_W = e_{ij}(W)$ too.
We thus get the desired $*$-isomorphism by sending
\begin{equation}\label{equation-*isomorphism}
\calB \chi_W \calB \ra M_{|W|}(K), \quad e_{ij}(W) \mapsto e_{ij}
\end{equation}
where $\{e_{ij}\}_{0 \leq i,j \leq |W|-1}$ is a complete system of matrix units for $M_{|W|}(K)$.

For the second statement, since $\calB = \bigoplus_{i \in \Z} \calB_0 (\chi_{X \backslash E} t)^i$, it is enough to show that $h_W$ commutes with all the elements $(\chi_{X \backslash E} t)^i$ for $i \in \Z$. By applying the involution, we may assume without loss of generality that $i \geq 1$. By induction, we may further assume that $i = 1$. But for $0 \leq l \leq |W|-1$,
\begin{align*}
e_{ll}(W) \cdot & \chi_{X \backslash E} t = \chi_{T^l(W)} \cdot \chi_{X \backslash E}t \\
& = \begin{cases}
		 \chi_{X \backslash E}t \cdot e_{l-1,l-1}(W) & \text{ if } 1 \leq l \leq |W|-1 \\
		 0 & \text{ otherwise}
		\end{cases} = \begin{cases}
										e_{l,l-1}(W) & \text{ if } 1 \leq l \leq |W|-1 \\
										0 & \text{ otherwise}
									\end{cases},\\
\chi_{X \backslash E}  t & \cdot e_{ll}(W) = (\chi_{X \backslash E} t)^{l+1} \chi_W (t^{-1} \chi_{X \backslash E})^l = \begin{cases}
										 e_{l+1,l}(W) & \text{ if } 0 \leq l \leq |W|-2 \\
										 0 & \text{ otherwise}
								   \end{cases},
\end{align*}
%\begin{align*}
%e_{ll}(W) \cdot & (\chi_{X \backslash E} t)^i = t^i \chi_{X \backslash (T^{-1}(E) \cup \cdots \cup T^{-i}(E))} \chi_{T^{l-i}(W)} \\
%& = \begin{cases}
%		 (\chi_{X \backslash E}t)^i e_{l-i,l-i}(W) & \text{ if } 0 \leq l,l-i \leq |W|-1 \\
%		 0 & \text{ otherwise}
%		\end{cases} = \begin{cases}
%										e_{l,l-i}(W) & \text{ if } 0 \leq l,l-i \leq |W|-1 \\
%										0 & \text{ otherwise}
%									\end{cases}\\
%(\chi_{X \backslash E}  t &)^i \cdot e_{ll}(W) = (\chi_{X \backslash E} t)^{l+i} \chi_W (t^{-1} \chi_{X \backslash E})^l = \begin{cases}
%																																												 e_{l+i,l}(W) & \text{ if } 0 \leq l,l+i \leq |W|-1 \\
%																																												 0 & \text{ otherwise}
%																																												\end{cases}
%\end{align*}
so by summing up over $l$ and doing the change $l+1 = l'$, it is clear that $h_W \cdot \chi_{X \backslash E} t = \chi_{X \backslash E} t \cdot h_W$. The result follows.

For the last part, simply observe that the family $\{e_{ij}(W)\}_{0 \leq i,j \leq |W|-1}$ is also a complete system of matrix units for the central factor $h_W \calB$ of $\calB$, so there is an isomorphism $h_W \calB \cong M_{|W|}(K)$ given by
$$h_W b \mapsto \sum_{i,j=0}^{|W|-1} b_{ij} e_{ij}, \quad \text{with } b_{ij} = e_{1i}(W) \cdot b \cdot e_{j1}(W) \in e_{11}(W) \calB e_{11}(W) \cong K$$
which is also a $*$-isomorphism. In fact, one should note that this $*$-isomorphism coincides with the $*$-isomorphism given in \eqref{equation-*isomorphism}, i.e. $h_W \calB = \calB \chi_W \calB$.

It is now straightforward to see that each $\chi_{T^l(W)}$ is a minimal projection in $\calB$.
\end{proof}
%We now compute the corners $e_{ii}(W) \calB e_{jj}(W) = \chi_{T^i(W)} \calB \chi_{T^j(W)}$ for $0 \leq i,j \leq |W|-1$.
%\begin{align*}
%e_{ii}(W) \calB & e_{jj}(W) = \bigoplus_{k \in \Z} \calB_0 e_{ii}(W) (\chi_{X \backslash E}t)^k e_{jj}(W) \\
%& = \Big( \bigoplus_{k \geq 0} \calB_0 e_{ii}(W) (\chi_{X \backslash E}t)^k e_{jj}(W) \Big) \oplus \Big(\bigoplus_{k > 0} \calB_0 e_{ii}(W) (t^{-1} \chi_{X \backslash E})^k e_{jj}(W) \Big)
%\end{align*}
%But by means of \eqref{equation-matr.unit}, the product $e_{ii}(W) (\chi_{X \backslash E}t)^k e_{jj}(W) = e_{ii}(W) (\chi_{X \backslash E}t)^{k+j}\chi_W (t^{-1} \chi_{X \backslash E})^j$ is either $e_{ii}(W) e_{k+j,j}(W)$ if $0 \leq k+j \leq |W|-1$ or zero otherwise, which gives, in addition, $e_{ij}(W)$ if $k = i-j$ and zero otherwise. Analogously, the product $e_{ii}(W) (t^{-1}\chi_{X \backslash E})^k e_{jj}(W) = (\chi_{X \backslash E}t)^i \chi_W (t^{-1} \chi_{X \backslash E})^{i+k} e_{jj}(W)$ is either $e_{i,i+k}(W) e_{jj}(W)$ if $0 \leq i+k \leq |W|-1$ or zero otherwise, which gives, in addition, $e_{ij}(W)$ if $k = j-i$ and zero otherwise. Putting everything together, we compute the corner $e_{ii}(W) \calB e_{jj}(W)$ to be
%\begin{align*}
%e_{ii}(W) \calB & e_{jj}(W) = \calB_0 e_{ij}(W) = (\calB_0 e_{ii}(W)) e_{ij}(W) = K e_{ij}(W)
%\end{align*}
%With these computations at hand one observes that
%$$h_W \calB = \bigoplus_{i ,j \geq 0}^{k-1} e_{ii}(W) \calB e_{jj}(W) = \bigoplus_{i ,j \geq 0}^{k-1} K e_{ij}(W) = \calB \chi_W \calB \cong M_{|W|}(K)$$
%through the $*$-isomorphism given in Proposition \ref{proposition-minimal.central}.

As a consequence of Proposition \ref{proposition-minimal.central} we obtain a $*$-homomorphism from the algebra $\calB$ into an infinite matrix product $\gotR := \gotR(E,\calP) = \prod_{W \in \V} M_{|W|}(K)$ given by
$$\pi : \calB \ra \gotR, \quad \pi(b) = (h_W \cdot b)_W.$$

We will show below that this homomorphism is injective, but for that we need a preliminary lemma.

\begin{lemma}\label{lemma-orthogonal.sum}
Suppose that $b \in \calB_0$, $b \neq 0$ can be written as a finite linear combination of the form
$$b = \sum_{U \in \calU} \lambda_U \chi_U,$$
where the $U \in \calU$ are nonempty pairwise disjoint clopen subsets of $X$, and $\lambda_U \in K^*$. Then there exists a $W \in \V$ such that $h_W \cdot b \neq 0$.
\end{lemma}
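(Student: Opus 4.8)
The plan is to argue by contradiction, reducing the statement to the fullness of $\mu$ together with the fact, established in Lemma \ref{lemma-quasi.partition}, that the translates $T^l(W)$ (for $W \in \V$ and $0 \le l \le |W|-1$) form a quasi-partition of $X$.

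First I would unpack the product. Since $h_W = \sum_{l=0}^{|W|-1} \chi_{T^l(W)}$ and $b = \sum_{U \in \calU} \lambda_U \chi_U$, and multiplication inside $\calB_0 \subseteq C_K(X)$ is pointwise, I obtain
$$h_W \cdot b = \sum_{l=0}^{|W|-1} \sum_{U \in \calU} \lambda_U \chi_{T^l(W) \cap U}.$$
The clopen sets $T^l(W) \cap U$ occurring here are pairwise disjoint: the $T^l(W)$ are disjoint for distinct pairs $(W,l)$ since they belong to the quasi-partition $\ol{\calP}$, and the $U$ are disjoint by hypothesis. As every coefficient $\lambda_U$ lies in $K^*$, the vanishing $h_W \cdot b = 0$ forces each summand to vanish, i.e. $T^l(W) \cap U = \emptyset$ for all $0 \le l \le |W|-1$ and all $U \in \calU$. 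This reduction is the one mildly delicate point; everything afterwards is a direct appeal to the quasi-partition property and to fullness of $\mu$.

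Now suppose, for contradiction, that $h_W \cdot b = 0$ for every $W \in \V$. By the previous step, each set $T^l(W)$ is then disjoint from the support $\bigsqcup_{U \in \calU} U$ of $b$ (note that this support is exactly $\bigsqcup_U U$, since the $\lambda_U$ are nonzero and the $U$ are disjoint). Writing $Y = \bigsqcup_{W \in \V} \bigsqcup_{l=0}^{|W|-1} T^l(W)$, we conclude $\big( \bigsqcup_{U \in \calU} U \big) \cap Y = \emptyset$, so $\bigsqcup_U U \subseteq X \setminus Y$. But Lemma \ref{lemma-quasi.partition} identifies the family $\{ T^l(W) \}$ with the quasi-partition $\ol{\calP}$ of $X$, whence $\mu(Y) = 1$ and therefore $\mu\big( \bigsqcup_U U \big) = 0$.

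Finally, since $b \neq 0$ the index set $\calU$ is nonempty, so there is a nonempty clopen $U \in \calU$; by fullness of $\mu$ we have $\mu(U) > 0$, contradicting $\mu\big( \bigsqcup_U U \big) = 0$. Hence $h_W \cdot b \neq 0$ for at least one $W \in \V$, as desired.
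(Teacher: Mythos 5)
Your argument is correct and uses exactly the same two ingredients as the paper --- fullness of $\mu$ and the fact from Lemma \ref{lemma-quasi.partition} that the translates $T^l(W)$ form a quasi-partition of $X$ --- merely organized in contrapositive form. The paper argues directly (fix one $U$, use $\mu(U)>0$ to find a $T^l(W)$ meeting $U$, then multiply $b$ by $\chi_{U\cap T^l(W)}$ to isolate the nonzero term $\lambda_U\chi_{U\cap T^l(W)}$), but this is the same proof.
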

\begin{proof}
Fix one $U \in \calU$. Since $\mu$ is a full measure, $\mu(U) > 0$. Also by Lemma \ref{lemma-quasi.partition}, there exists a $W \in \V$ of length $k \geq 1$ such that $U \cap T^l(W) \neq \emptyset$ for some $0 \leq l \leq k-1$. But then
$$h_W \cdot \chi_{U \cap T^l(W)}b = \lambda_U \chi_{U \cap T^l(W)} \neq 0.$$
It follows that $h_W \cdot b \neq 0$.
\end{proof}

We are now ready to prove injectivity of $\pi$.

\begin{proposition}\label{proposition-embedding.B.R}
With the above hypothesis and notation, we have that the map $\pi : \calB \ra \gotR$ is injective. Moreover, the socle of $\calB$ is essential and coincides with the ideal generated by $\chi_W$, for $W \in \V$, that is
$$\soc(\calB) = \bigoplus_{W \in \V} \calB \chi_W \calB \cong \bigoplus_{W \in \V} M_{|W|}(K).$$ 
\end{proposition}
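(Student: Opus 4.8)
The plan is to establish the injectivity of $\pi$ first and then read off the socle from it. For injectivity, I would write an arbitrary element as a finite sum $b = \sum_{i} b_i t^i$ with $b_i \in \calB_i$ and suppose $\pi(b) = 0$, that is $h_W \cdot b = 0$ for every $W \in \V$. Since each $h_W$ lies in $\calB_0$ and is central in $\calB$ by Proposition \ref{proposition-minimal.central}, one has $h_W b = \sum_i (h_W b_i) t^i$ with $h_W b_i \in \calB_i$, so the directness of the decomposition $\calB = \bigoplus_{i} \calB_i t^i$ forces $h_W b_i = 0$ for all $i$ and all $W$. Now each $b_i$ belongs to $\calB_0 \subseteq C_K(X)$, hence (using its finitely many level sets) is a finite $K$-linear combination of characteristic functions of pairwise disjoint nonempty clopen sets with nonzero coefficients. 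If some $b_i$ were nonzero, Lemma \ref{lemma-orthogonal.sum} would produce a $W \in \V$ with $h_W b_i \neq 0$, a contradiction; hence every $b_i = 0$ and $b = 0$.

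For the socle, set $S := \bigoplus_{W \in \V} \calB \chi_W \calB$. First I would check that this sum is genuinely direct: by Proposition \ref{proposition-minimal.central} we have $\calB \chi_W \calB = h_W \calB$ with $h_W$ a central projection, and for $W \neq W'$ the towers $\{T^l(W)\}_{l}$ and $\{T^{l'}(W')\}_{l'}$ are disjoint members of the quasi-partition $\ol{\calP}$ of Lemma \ref{lemma-quasi.partition}, so $h_W h_{W'} = 0$; orthogonality of the central idempotents $h_W$ gives the internal direct sum $S = \bigoplus_W h_W\calB$ together with $S \cong \bigoplus_W M_{|W|}(K)$. Next, to see $S \subseteq \soc(\calB)$, I would note that each $h_W \calB \cong M_{|W|}(K)$ decomposes as $\bigoplus_{l=0}^{|W|-1} \chi_{T^l(W)}\calB$, and since each $\chi_{T^l(W)}$ is a minimal projection of $\calB$ (Proposition \ref{proposition-minimal.central}), each $\chi_{T^l(W)}\calB$ is a minimal right ideal of $\calB$; thus $S$ is a sum of minimal right ideals.

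The remaining point is the reverse inclusion together with essentiality, and here injectivity of $\pi$ does the work. I claim $S$ is an essential right ideal: given a nonzero right ideal $J$ and a nonzero $b \in J$, injectivity gives some $W$ with $h_W b \neq 0$, and because $h_W$ is central, $h_W b = b h_W \in b\calB \subseteq J$ while $h_W b \in h_W \calB \subseteq S$, so $0 \neq h_W b \in J \cap S$. Consequently, for any minimal right ideal $I$ of $\calB$ the right ideal $I \cap S$ is nonzero, hence equals $I$ by minimality, so $I \subseteq S$; summing over all minimal right ideals yields $\soc(\calB) \subseteq S$. Combined with the previous inclusion this proves $\soc(\calB) = S$, and since $S$ is essential the socle is essential. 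Finally, applying the involution $*$ (an anti-automorphism with $h_W^* = h_W$, so $S^* = S$) interchanges minimal right and left ideals and shows the left socle also equals $S$, making the statement symmetric.

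The main obstacle is the passage from the formal algebraic bookkeeping to the dynamical input. Everything except essentiality of $S$ is routine once Proposition \ref{proposition-minimal.central} is available, but essentiality — equivalently, that no nonzero element of $\calB$ is annihilated on the left by all the $h_W$ — is precisely the injectivity of $\pi$, and that is where fullness and ergodicity of $\mu$ enter through Lemma \ref{lemma-orthogonal.sum}. The one subtlety to handle with care is the homogeneous reduction in the injectivity argument, which relies essentially on the centrality of the $h_W$.
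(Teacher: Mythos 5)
Your proof is correct and follows essentially the same route as the paper: injectivity is reduced, via the grading $\calB=\bigoplus_i\calB_i t^i$, to showing that no nonzero element of $\calB_0$ is killed by all the $h_W$, which is Lemma~\ref{lemma-orthogonal.sum}, and the socle identification then comes from the minimality of the $\chi_{T^l(W)}$ together with the fact that the socle is contained in every essential ideal. The only (harmless) deviation is that you bypass Lemma~\ref{lemma-structure.bi} by observing directly that a nonzero element of $\calB_0$, being locally constant on a compact space, is an orthogonal combination of characteristic functions of nonempty clopen sets, and that you treat all coefficients $b_i$ rather than just the lowest-degree one.
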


\begin{proof}
For injectivity, it is enough to show that the ideal $\bigoplus_{W \in \V} h_W \calB$ is essential in $\calB$ or, equivalently, that for any nonzero element $b \in \calB$, we can always find a $W \in \V$ such that $h_W \cdot b \neq 0$. By writing $b$ as a finite sum
$$b = \sum_{i = -n}^{m} b_i (\chi_{X \backslash E}t)^i = \sum_{i = -n}^{m} b_i t^i$$
with each $b_i \in \calB_i$ and $b_{-n} \neq 0$ (where $n \in \Z$), it is enough to show that there exists a $W \in \V$ such that $h_W \cdot b_{-n} \neq 0$. But this follows immediately from Lemmas \ref{lemma-structure.bi} and \ref{lemma-orthogonal.sum}. We obtain that $\pi$ is injective, and also that the ideal $\bigoplus_{W \in \V} h_W \calB$ is essential in $\calB$.

%once we show that given any nonempty set of the form \eqref{equation-form.of.bs} there exists an occurrence of $(Z'_{s-1},\dots , Z_0',\dots , Z'_{-r})$ in some $W \in \V$ of the form \eqref{equation-Wexpression}. Now given a nonempty set $S$ of the form \eqref{equation-form.of.bs}, we have that $\mu (S) >0$ because $\mu$ is a full measure. Therefore it follows that there is $W' \in \ol{\calP}$ such that $S \cap W' \neq \emptyset$. Now there is some $W \in \V$ of the form \eqref{equation-Wexpression} and a unique $0 \leq t \leq k-1$ such that $W'= T^t(W)$. It follows that $t= l+s$ for some $l \geq 0$, and that $l$ is an occurrence of $(Z'_{s-1},\dots , Z_0',\dots , Z'_{-r})$ in $W$. We obtain that $\pi$ is injective. This also shows that the ideal of $\calB$ generated by the $\chi_W$ for $W \in \V$ is essential in $\calB$.

Now since each $\chi_W$ is a minimal projection by Proposition \ref{proposition-minimal.central}, it follows that the ideal of $\calB$ generated by $\chi_W$ is contained in the socle of $\calB$. This says that $\bigoplus_{W \in \V} \calB \chi_W \calB \subseteq \soc(\calB)$. In particular, this shows that $\soc(\calB)$ is essential in $\calB$, and from the general fact that the socle is contained in any essential ideal we conclude that $\bigoplus_{W \in \V} \calB \chi_W \calB = \soc(\calB)$, as required.
\end{proof}

Given a monomial $b_i t^i$ (resp. $b_{-j} t^{-j}$) with $b_i \in \calB_i$ (resp. $b_{-j} \in \calB_{-j}$), we are interested in computing its image under $\pi$. By Lemma \ref{lemma-comm.B0}, we can write $b_i$ (resp. $b_j$) as a linear combination of characteristic functions of nonempty sets of the form
\begin{equation}\label{equation-form.of.bs}
T^{s-1}(Z'_{s-1}) \cap T^{s-2}(Z'_{s-2})\cap \cdots \cap Z'_0 \cap T^{-1}(Z'_{-1}) \cap \cdots \cap T^{-r}(Z'_{-r}), 
\end{equation}
where $r,s \geq 0$. Note that since $b_i = \chi_{X \setminus (E\cup T(E) \cup \cdots \cup T^{i-1}(E))} b_i$ (resp. $b_{-j} = \chi_{X\setminus (T^{-1}(E)\cup \cdots \cup T^{-j}(E))} b_{-j}$), we can (and will) assume, by expanding these sets if necessary, that $s \geq i$ (resp. $r \geq j$).

\begin{definition}\label{definition-occurrence}
Assume that $W$ is in standard form \eqref{equation-Wexpression}, i.e.
$$W = E \cap T^{-1}(Z_1) \cap \cdots \cap T^{-k+1}(Z_{k-1}) \cap T^{-k}(E)$$
for some $k \geq 1$ and some $Z_1,...,Z_{k-1} \in \calP$. We say that a sequence $(Z'_{s-1},\dots , Z_0',\dots , Z'_{-r})$ of elements of $\calP$ \textit{occurs} in $W$ if there exists $l \geq 0$ such that
$$Z_{l+1}= Z'_{s-1},\quad Z_{l+2}= Z'_{s-2}, \quad \dots, \quad Z_{l+s}= Z'_0, \quad Z_{l+s+1}= Z'_{-1},\quad  \dots ,\quad Z_{l+s+r}= Z'_{-r}.$$
That is, if the sequence $(Z'_{s-1},\dots , Z_0',\dots , Z'_{-r})$ occurs as a subsequence of $(Z_1,Z_2,...,Z_{k-1})$ displaced $l$ positions to the right. In this case we say that $l$ is an \textit{occurrence} of $(Z'_{s-1},\dots , Z_0',\dots , Z'_{-r})$ in $W$. Note that a necessary condition for the sequence $(Z'_{s-1},\dots , Z_0',\dots , Z'_{-r})$ to be an occurrence is that $s+r \leq k-1$.
\end{definition}

Observe that, by definition, if we let $S = T^{s-1}(Z'_{s-1}) \cap \cdots \cap Z'_0 \cap \cdots \cap T^{-r}(Z'_{-r})$ be given by \eqref{equation-form.of.bs}, then $l$ is an occurrence of $(Z'_{s-1},...,Z'_0,...,Z'_{-r})$ in $W$ if and only if $s+r \leq k-1$ and $T^{l+s}(W) \cap S$ is nonempty, and in this case necessarily $T^{l+s}(W) \cap S = T^{l+s}(W)$.

\begin{lemma}\label{lemma-image.of.pi}
Assume the above notation and that $i,j \geq 0$. We have:
\begin{enumerate}[i)]
\item If $b_i = \chi_S$, $S$ of the form \eqref{equation-form.of.bs}, then $h_W \cdot b_it^i $ is nonzero if and only if $(Z'_{s-1},\dots , Z_0',\dots , Z'_{-r})$ occurs in $W$, and in this case we have
 $$h_W \cdot b_it^i = \sum_l e_{l+s,l+s-i}(W) ,$$
 where $l$ ranges over the set of occurrences of $(Z'_{s-1},\dots , Z_0',\dots , Z'_{-r})$ in $W$. 
  \item Suppose that $b_{-j} = \chi_S$ with $S$ also given by \eqref{equation-form.of.bs}. Then $h_W \cdot b_{-j}t^{-j}$ is nonzero if and only if $(Z'_{s-1},\dots , Z_0',\dots , Z'_{-r})$ occurs in $W$, and in this case we have
 $$h_W \cdot b_{-j}t^{-j} = \sum_l e_{l+s,l+s+j}(W) ,$$
 where $l$ ranges over the set of occurrences of $(Z'_{s-1},\dots , Z_0',\dots , Z'_{-r})$ in $W$.
\end{enumerate}
Observe that in any case the formula is valid, that is
$$h_W \cdot b_it^i = \sum_l e_{l+s,l+s-i}(W) \quad \text{ whenever } i \in \Z,$$
where $l$ ranges over the set of occurrences of $(Z'_{s-1},\dots , Z_0',\dots , Z'_{-r})$ in $W$.
\end{lemma}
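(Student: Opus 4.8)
The plan is to peel off the central projection $h_W=\sum_{l'=0}^{k-1}\chi_{T^{l'}(W)}$ (writing $k=|W|$ and recalling $\chi_{T^{l'}(W)}=e_{l'l'}(W)$), decide combinatorially which terms survive, and then convert the survivors into matrix units via the arithmetic established inside the proof of Proposition~\ref{proposition-minimal.central}. First I would use that $b_i=\chi_S$ lies in $\calB_i$, so that $b_it^i=\chi_S(\chi_{X\backslash E}t)^i$ with $s\ge i$, to reduce the claim to the identity
$$h_W\cdot b_it^i=\sum_{l'=0}^{k-1}\chi_{T^{l'}(W)\cap S}\,(\chi_{X\backslash E}t)^i,$$
after which everything hinges on understanding the clopen sets $T^{l'}(W)\cap S$.

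The heart of the argument is a coordinate count. Encoding a point $x$ by its itinerary $n\mapsto$ (the cell of $\calP\cup\{E\}$ containing $T^n(x)$), membership in $T^{l'}(W)$ fixes the itinerary on the window $\{-l',\dots,k-l'\}$, the value $E$ being forced precisely at the endpoints $-l'$ and $k-l'$, while membership in $S$ forces $\calP$-values on the window $\{1-s,\dots,r\}$. If $l'<s$ the index $-l'$ lies in the $S$-window, where $S$ demands a $\calP$-cell against the $E$ demanded by $W$, so the intersection is empty; symmetrically, if $l'>k-1-r$ the index $k-l'$ produces the same clash. In the remaining range $s\le l'\le k-1-r$ the $S$-window sits inside the $W$-window, so $T^{l'}(W)\cap S$ is either empty or all of $T^{l'}(W)$, and it is everything exactly when the fixed values agree; unwinding this, the agreement is precisely the condition that $l:=l'-s$ be an occurrence of $(Z'_{s-1},\dots,Z'_{-r})$ in $W$. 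This matches the remark following Definition~\ref{definition-occurrence} (note an occurrence automatically satisfies $l+s\le k-1$, so every survivor indexes a genuine term of the sum). The step I expect to be most delicate is exactly this dichotomy: one must rule out proper nonempty sub-intersections, which is what the two endpoint clashes accomplish, and in particular when $s+r>k-1$ the admissible range is empty, correctly giving $h_W\cdot b_it^i=0$.

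It then remains, on each survivor $\chi_{T^{l'}(W)}=e_{l'l'}(W)$ with $l'=l+s\ge s\ge i$, to simplify $e_{l'l'}(W)(\chi_{X\backslash E}t)^i$. Using the relation $e_{bb}(W)\cdot\chi_{X\backslash E}t=e_{b,b-1}(W)$ for $b\ge1$ from Proposition~\ref{proposition-minimal.central} and inducting, this equals $e_{l',l'-i}(W)=e_{l+s,\,l+s-i}(W)$; summing over occurrences $l$ gives part (i). Finally, (ii) is obtained from (i) by an adjoint argument rather than a second computation: since $h_W$ is a self-adjoint central projection and $(b_{-j}t^{-j})^*=t^j\chi_S=\chi_{T^j(S)}t^j$, with $T^j(S)$ again of the form \eqref{equation-form.of.bs} carrying the same partition sequence but with parameters $s+j$ and $r-j$ (legitimate as $r\ge j$, which also ensures $\chi_{T^j(S)}\in\calB_j$), one has $(h_W\cdot b_{-j}t^{-j})^*=h_W\cdot\chi_{T^j(S)}t^j$. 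Applying (i) to the latter yields $\sum_l e_{l+s+j,\,l+s}(W)$, and taking adjoints with $e_{ab}(W)^*=e_{ba}(W)$ gives $\sum_l e_{l+s,\,l+s+j}(W)$, as asserted. The unified formula follows at once, since $e_{l+s,\,l+s+j}(W)=e_{l+s,\,l+s-i}(W)$ when $i=-j$.
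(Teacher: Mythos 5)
Your proof is correct, and for part (i) it follows the same strategy as the paper: expand $h_W\cdot b_it^i=\sum_{l'}\chi_{T^{l'}(W)\cap S}\,t^i$, identify the nonvanishing terms with occurrences, and convert each survivor into the matrix unit $e_{l+s,l+s-i}(W)$. Two points where you genuinely diverge are worth noting. First, the paper disposes of the dichotomy ``$T^{l+s}(W)\cap S$ is empty unless $l$ is an occurrence, in which case it equals $T^{l+s}(W)$'' by simply citing the observation stated after Definition~\ref{definition-occurrence} (itself asserted ``by definition''); your itinerary argument, with the two endpoint clashes forced by the $E$-coordinates of $W$ at positions $-l'$ and $k-l'$, actually proves that observation, which is the only place where something could conceivably go wrong. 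Second, for part (ii) the paper says only that the proof is ``similar,'' i.e.\ it repeats the computation with $t^{-j}$; your adjoint argument via $(b_{-j}t^{-j})^*=\chi_{T^j(S)}t^j$, the reindexing $(s,r)\mapsto(s+j,r-j)$ (legitimate because the normalization $r\ge j$ was imposed before the lemma), and $e_{ab}(W)^*=e_{ba}(W)$ derives (ii) formally from (i) using the centrality and self-adjointness of $h_W$. This buys a shorter, less error-prone treatment of the negative-power case and makes the unified formula at the end of the statement transparent; the paper's direct computation, by contrast, keeps both cases on the same concrete footing. Your use of the recursion $e_{bb}(W)\cdot\chi_{X\backslash E}t=e_{b,b-1}(W)$ in place of the paper's one-line manipulation of $(\chi_{X\backslash E}t)^{l+s}\chi_W(t^{-1}\chi_{X\backslash E})^{l+s}t^i$ is an equivalent bookkeeping choice, valid since $l+s\ge s\ge i$.
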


\begin{proof}
$i)$ This is a simple computation. Write $b_i = \chi_S$ with $S = T^{s-1}(Z'_{s-1}) \cap \cdots \cap Z'_0 \cap \cdots \cap T^{-r}(Z'_{-r})$. We compute $h_W \cdot b_i t^i = \sum_{l=0}^{k-1} \chi_{T^l(W) \cap S} t^i$. By the observation preceding the lemma this sum equals $\sum_{l} \chi_{T^{l+s}(W)} t^i$, where $l$ ranges over the set of occurrences of $(Z'_{s-1},\dots , Z_0',\dots , Z'_{-r})$ in $W$. Since $s \geq i$,
\begin{align*}
\chi_{T^{l+s}(W)} & t^i = (\chi_{X \backslash E} t)^{l+s} \chi_W (t^{-1} \chi_{X \backslash E})^{l+s} t^i = t^{l+s} \chi_{X \backslash (T^{-1}(E) \cup \cdots \cup T^{-l-s}(E))} \cdot \chi_W t^{-l-s} t^i \\
%& = t^{l+s} \chi_{X \backslash (T^{-1}(E) \cup \cdots \cup T^{-l-s}(E))} \cdot \chi_W \cdot \chi_{X \backslash (T^{-1}(E) \cup \cdots \cup T^{-l-s+i}(E))} t^{-l-s+i} \\
& = (\chi_{X \backslash E} t)^{l+s} \chi_W (t^{-1} \chi_{X \backslash E})^{l+s-i} = e_{l+s,l+s-i}(W).
\end{align*}
The result follows from this computation. The proof of $ii)$ is similar. 
\end{proof}

There are two relatively special elements inside $\calB$ that we are interested in computing their images under $\pi$ for later use (see Lemma \ref{lemma-compat.measure}). These are the elements $\chi_E = 1 - (\chi_{X \backslash E}t)(\chi_{X \backslash E}t)^*$ and $\chi_{T^{-1}(E)} = 1 - (\chi_{X \backslash E}t)^*(\chi_{X \backslash E}t)$. Their images under $\pi$ are easy to compute: for $W \in \V$, we have $\chi_W \cdot \chi_E = \chi_W$ and $\chi_{T^l(W)} \cdot \chi_E = 0$ for $1 \leq l \leq |W|-1$, so $h_W \cdot \chi_E = \chi_W = e_{00}(W)$ and
$$\pi(\chi_E) = (e_{00}(W))_W.$$

We also have $\chi_{T^l(W)} \cdot \chi_{T^{-1}(E)} = 0$ for $0 \leq l \leq |W| - 2$ and $\chi_{T^{|W|-1}(W)} \cdot \chi_{T^{-1}(E)} = \chi_{T^{|W|-1}(W)}$, so $h_W \cdot \chi_{T^{-1}(E)} = \chi_{T^{|W|-1}(W)} = e_{|W|-1,|W|-1}(W)$ and
$$\pi(\chi_{T^{-1}(E)}) = (e_{|W|-1,|W|-1}(W))_W.$$

\subsection{Example: the lamplighter group algebra}\label{subsection-lamplighter.group}

We now show that the so-called lamplighter group algebra $K[\Gamma]$ can be realized as a crossed product algebra of the above form, so we can apply our main construction to $K[\Gamma]$. In a future paper \cite{AC} such a construction will be used in order to study possible $l^2$-Betti numbers arising from $K[\Gamma]$.

The lamplighter group $\Gamma$ is defined to be the wreath product of the finite group of two elements $\Z_2$ by $\Z$. 
In other words, $\Gamma = \Z_2 \wr \Z = \Big( \bigoplus_{i \in \Z} \Z_2 \Big) \rtimes_{\sigma} \Z$, where $\sigma : \Z \act H = \bigoplus_{i \in \Z} \Z_2$ is the Bernoulli shift defined by $\sigma(n)(x)_i = x_{i+n}, x = (x_i)_i \in H$. In terms of generators and relations, $\Gamma$ is generated by $\{a_i\}_{i \in \Z}$ and $t$, satisfying the following relations (here $1$ denotes its unit element):
$$a_i^2=1, \quad a_ia_j = a_ja_i, \quad ta_it^{-1} = a_{i-1} \quad \text{ for all }i,j \in \Z.$$
Let now $X = \wh{H}$ be the Pontryagin dual of $H$, which we identify with the Cantor set $\prod_{i \in \Z}\{0,1\}$, and let $T : X \ra X$ be the shift homeomorphism, namely $T(x)_i = x_{i+1}$ for $x \in X$. Take $K$ to be a field of characteristic different from $2$. Fourier 
transform (or Pontryagin duality) gives a $*$-isomorphism
$$\mathcal F : K[\Gamma] \ra C_K(X) \rtimes_T \Z$$
where $t$ is mapped to the generator of $\Z$, also denoted by $t$, and $a_i$ is mapped to $\chi_{U_i} - \chi_{\ol{U_i}}$, being $U_i = \{x \in X \mid x_i = 0\}$ and $\ol{U_i}$ its complement in $X$. In particular, the elements $e_i = \frac{1+a_i}{2}$ are idempotents in $K[\Gamma]$, which correspond to the characteristic function of $U_i$.

\begin{notation}\label{notation-cylinder.sets}
Given $\varepsilon_{-k},...,\varepsilon_l \in \{0,1\}$, the cylinder set $\{x \in X \mid x_{-k} = \varepsilon_{-k},...,x_l = \varepsilon_l\}$ will be denoted by $[\varepsilon_{-k} \cdots \ul{\varepsilon_0} \cdots \varepsilon_l]$. So for example $\calU_0 = [\ul{0}]$, and its characteristic function $\chi_{[\ul{0}]}$ is identified with $e_0$ under $\mathcal F$.
\end{notation}
%Therefore a basis for the topology of $X$ is given by the collection of all cylinder sets.

We have a natural measure $\mu$ on $X$ given by the usual product measure, where we take the $\big(\frac{1}{2},\frac{1}{2}\big)$-measure on each component $\{0,1\}$. It is well-known (cf. \cite[Example 3.1]{KM}) that $\mu$ is an ergodic, full and shift-invariant probability measure on $X$. Therefore we can apply our methods from Section \ref{subsection-quasi.partitions.representation.B}. For a fixed $n \geq 1$, we take $E = [1...\ul{1}...1]$ (with $2n+1$ one's), and the partition of the complement $\calP$ given by the obvious one, namely
$$\calP = \{ [00...\ul{0}...00],[00...\ul{0}...01],...,[01...\ul{1}...11]\}.$$
Here $\calB$ coincides with the unital $*$-subalgebra of $K[\Gamma]$ generated by the partial isometries $s_i = e_it$ for $i=-n,...,n$.

Also, the quasi-partition $\ol{\calP}$ consists here of the translates of the sets $W \in \V$ of the following form:
\begin{enumerate}[a)]
\item $W_0 = E \cap T^{-1}(E) = [11...\ul{1}...111]$ of length $1$ (there are $2n+2$ one's);
\item $W_1 = [11...\ul{1}...11011...{1}...11]$ of length $2n+2$ (there are $4n+2$ one's, and a zero);
\item for $l \geq 0$, $W(\varepsilon_1,...,\varepsilon_l) = [11...\ul{1}...110\varepsilon_1 \cdots \varepsilon_l 011...{1}...11]$ of length $2n+3+l$, where $(\varepsilon_1,...,\varepsilon_l) \in \{0,1\}^l$ is a sequence having 
at most $2n$ consecutive one's.
\end{enumerate}
We refer the reader to \cite{AC} for more details. It is worth to mention that the $*$-algebra $\calB$ corresponding to $n=0$ is the $*$-algebra considered in \cite{AG}, concretely it is 
$*$-isomorphic to the semigroup algebra $K[\calF]$ of the monogenic free inverse monoid $\calF$ (see \cite[Section 4 and Proposition 6.5]{AG}).

\section{Sylvester matrix rank functions on \texorpdfstring{$\calA$}{}}\label{section-Sylvester.rank.functions.A}

Throughout this section, $T$ will denote a homeomorphism of an infinite, totally disconnected, compact metrizable space $X$, and $\mu$ will denote a full ergodic $T$-invariant Borel probability measure on $X$. Note that this implies that $\mu$ is atomless, that is,  $\mu (\{ x \})=0$ for all $x\in X$.

\subsection{Approximation algebras}\label{subsection-approximation.algebras}

We make our construction from Section \ref{section-main.construction} to depend on a point $y \in X$. Let $\{E_n\}_{n \geq 1}$ be a decreasing sequence of clopen sets of $X$ such that $\bigcap_{n \geq 1} E_n= \{ y \}$, and let $\calP_n$ be partitions of $X \backslash E_n$ such that $\calP_{n+1} \cup \{E_{n+1}\}$ is finer than $\calP_n \cup \{E_n\}$; so $E_n$ is the disjoint union of $E_{n+1}$ and some of the sets in $\calP_{n+1}$.

\begin{hypothesis}\label{hypothesis-generate.top}
We also require that $\bigcup_{n \geq 1} (\calP _n \cup \{E_n\})$ generates the topology of $X$.
\end{hypothesis}

Recalling Lemma \ref{lemma-quasi.partition}, each quasi-partition $\ol{\calP}_n$ consists of all the $T$-translates of the nonempty subsets of $X$ of the form
$$W = E_n \cap T^{-1}(Z_1) \cap \cdots \cap T^{-k+1}(Z_{k-1}) \cap T^{-k}(E_n)$$
for some $k \geq 1$ and some $Z_1,...,Z_{k-1} \in \calP_n$. We write $\V_n$ for the set of all the $W \in \ol{\calP}_n$ of the above form. We thus have $\ol{\calP}_n = \{ T^l(W) \mid W \in \V_n, 0\le l \le |W|-1 \}$ for all $n$.

In these conditions, it follows that the quasi-partition $\ol{\calP}_{n+1}$ constructed from the partition $\calP_{n+1} \cup \{E_{n+1}\}$ is finer than 
the quasi-partition $\ol{\calP}_n$ constructed from the partition $\calP_n \cup \{E_n\}$. Indeed, let $W' \in \V_{n+1}$ and write it as
$$W' = E_{n+1} \cap T^{-1}(Z'_1) \cap \cdots \cap T^{-k+1}(Z'_{k-1}) \cap T^{-k}(E_{n+1})$$
for $k\ge 1$ and $Z'_1, ..., Z'_{k-1} \in \calP_{n+1}$. Since $\calP_{n+1} \cup \{ E_{n+1}\}$ is finer than $\calP_n \cup \{E_n\}$, there exist unique integers $1 \leq k_1 < \cdots < k_r < k$ and unique elements $Z_j \in \calP_n$ for $j \in \{1,...,k-1\} \backslash \{k_1,...,k_r\}$ such that
\begin{equation*}
\begin{split}
	Z'_1 &\subseteq Z_1, \\
	Z'_2 &\subseteq Z_2, \\
	& \vdots \\
	Z'_{k_1-1} &\subseteq Z_{k_1-1},\\
	\text{ and } Z&'_{k_1} \subseteq E_n;
\end{split}\qquad
\begin{split}
	Z'_{k_1+1} &\subseteq Z_{k_1+1}, \\
	Z'_{k_1+2} &\subseteq Z_{k_1+2}, \\
	& \vdots \\
	Z'_{k_2-1} &\subseteq Z_{k_2-1},\\
	\text{ and } Z&'_{k_2} \subseteq E_n;
\end{split}\qquad \cdots \qquad
\begin{split}
	Z'_{k_r+1} &\subseteq Z_{k_r+1}, \\
	Z'_{k_r+2} &\subseteq Z_{k_r+2}, \\
	& \vdots \\
	Z'_{k-1} &\subseteq Z_{k-1}.
\end{split}
\end{equation*}
Therefore
\begin{equation}\label{equation-quasi.partition.finer}
\begin{aligned}
W' \subseteq & \Big( E_n \cap T^{-1}(Z_1) \cap \cdots \cap T^{-k_1+1}(Z_{k_1-1}) \cap T^{-k_1}(E_n) \Big) \\
& \cap T^{-k_1}\Big( E_n \cap T^{-1}(Z_{k_1+1}) \cap \cdots \cap T^{-k_2+k_1+1}(Z_{k_2-1}) \cap T^{-k_2+k_1}(E_n) \Big) \\
& \cap \cdots  \cap T^{-k_r}\Big( E_n \cap T^{-1}(Z_{k_r+1}) \cap \cdots \cap T^{-k+k_r+1}(Z_{k-1}) \cap T^{-k+k_r}(E_n) \Big) \\
= & W_0 \cap T^{-k_1}(W_1) \cap \cdots \cap T^{-k_r}(W_r),
\end{aligned}
\end{equation}
where each $W_i = E_n \cap T^{-1}(Z_{k_i+1}) \cap \cdots \cap T^{-k_{i+1}+k_i+1}(Z_{k_{i+1}-1}) \cap T^{-k_{i+1}+k_i}(E_n)$ belongs to $\V_n$, and that they are not necessarily distinct. From here, it is clear that for $0 \leq l \leq k-1$, $T^l(W')$ is contained in some translate of some $W_i$, and so $\ol{\calP}_{n+1}$ is finer than $\ol{\calP}_n$.

\text{ }\vspace{-0.2cm}

In this way, we construct a sequence of approximating algebras $\calA_n := \calA(E_n, \calP_n)$ (see Definition \ref{definition-approx.alg}) 
such that $\calA_n \subseteq \calA_{n+1}$, where the inclusions are given by the embeddings $\iota_n(\chi_Z \cdot t) = \sum_{Z'} \chi_{Z'} \cdot t$, where the sum is over all 
the $Z' \in \calP_{n+1}$ contained in $Z$. By Proposition \ref{proposition-embedding.B.R}, we have embeddings $\pi_n : \calA_n \ra \gotR_n$ where $\gotR_n = \prod_{W \in \V_n} M_{|W|}(K)$, given by $\pi_n(a) = (h_W \cdot a)_W$.

We can build a generalized Bratteli diagram associated to such construction, such that each vertex receives a finite number of edges, and we can order this set of edges in the same way as for the case of an essentially minimal homeomorphism, see for instance \cite{HPS92} for the latter. The only difference is that there are a possibly infinite number of vertices at each level, and that these vertices might emit in principle an infinite number of edges. This can be done as follows.

The vertices at the level $n$ of this generalized Bratteli diagram are the sets $W \in \V_n$, that is, the sets
\begin{equation}\label{equation-sets.for.Pn}
W = E_n \cap T^{-1}(Z_1)\cap \cdots \cap T^{-k+1}(Z_{k-1}) \cap T^{-k}(E_n),
 \end{equation}
where $Z_1,...,Z_{k-1} \in \calP _n$. There is an arrow from a vertex $W \in \V_n$ to a vertex $W' \in \V_{n+1}$ if $W$ appears as a segment of the sequence corresponding to $W'$; more precisely, if $W$ equals to some $W_i$, being
$$W' \subseteq W_0 \cap T^{-k_1}(W_1) \cap \cdots \cap T^{-k_r}(W_r)$$
as in \eqref{equation-quasi.partition.finer}. Equivalently, if $W' \subseteq T^{-j'}(W)$ or even $W' \cap T^{-j'}(W) \neq \emptyset$ for some $0 \leq j' < |W'|-1$. If no such $j'$ exists, then there are no arrows from $W$ to $W'$. The edges ending at $W'$ are linearly ordered according to the integers $j'$.

Thus, the set of arrows $W \ra W'$ is in bijective correspondence with the set $J(W,W') = \{0 \leq j' < |W'|-1 \mid W' \subseteq T^{-j'}(W)\}$. Clearly $J(W,W')$ is always a finite set, and each $W'$ receives at least one arrow.

\begin{proposition}\label{proposition-embed.Rn}
Following the above notation, we can embed each $\gotR_n$ into $\gotR_{n+1}$ via the construction of the generalized Bratteli diagram just mentioned.
\end{proposition}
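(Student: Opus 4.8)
The plan is to realize $\gotR_n \hookrightarrow \gotR_{n+1}$ as the canonical unital embedding of products of matrix algebras attached to the generalized Bratteli diagram described above, and then to establish injectivity directly. The starting point is the decomposition \eqref{equation-quasi.partition.finer}: for each $W' \in \V_{n+1}$, writing the visit times of its orbit tower to $E_n$ as $0 = k_0 < k_1 < \cdots < k_r < k_{r+1} = |W'|$, one gets sets $W_0, \dots, W_r \in \V_n$ (not necessarily distinct) with $W' \subseteq W_0 \cap T^{-k_1}(W_1) \cap \cdots \cap T^{-k_r}(W_r)$ and $|W_i| = k_{i+1} - k_i$. In particular $\sum_{i=0}^{r} |W_i| = |W'|$, and the number of arrows $W \to W'$ equals $\#\{\, i : W_i = W \,\} = |J(W,W')|$, so that $\sum_{W \in \V_n} |J(W,W')|\,|W| = |W'|$. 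This dimension-matching is exactly what is needed to build a block-diagonal embedding into $M_{|W'|}(K)$.

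Concretely, for each $W'$ I would define $\Phi_{W'} \colon \gotR_n \to M_{|W'|}(K)$ by placing, for an element $(a_W)_{W \in \V_n}$, the block $a_{W_i} \in M_{|W_i|}(K)$ on the rows and columns indexed by $\{k_i, k_i+1, \dots, k_{i+1}-1\}$; in terms of the matrix units of Proposition \ref{proposition-minimal.central} this amounts to $e_{pq}(W_i) \mapsto e_{k_i+p,\,k_i+q}(W')$. Assembling these coordinatewise yields
$$\Phi = (\Phi_{W'})_{W' \in \V_{n+1}} \colon \gotR_n \longrightarrow \gotR_{n+1} = \prod_{W' \in \V_{n+1}} M_{|W'|}(K).$$
Each $\Phi_{W'}$ is the composite of the projection $\gotR_n \to \bigoplus_{i=0}^{r} M_{|W_i|}(K)$ with a block-diagonal inclusion, so it is a unital homomorphism; it respects the transpose involution because every summand is sent to a diagonal block sitting in the self-adjoint position $\{k_i, \dots, k_{i+1}-1\}$. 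Hence $\Phi$ is a unital $*$-homomorphism, and one also checks that it is compatible with the approximating algebras, $\pi_{n+1} \circ \iota_n = \Phi \circ \pi_n$, which is what renders the embeddings compatible in the direct limit.

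It remains to prove injectivity, which carries the real content. Since $\Phi$ is the product of the $\Phi_{W'}$ and each $\Phi_{W'}$ places the coordinates $a_{W_i}$ in pairwise disjoint diagonal blocks, we have $\Phi((a_W)_W) = 0$ if and only if $a_W = 0$ for every $W \in \V_n$ that occurs as some $W_i$ in the decomposition of at least one $W' \in \V_{n+1}$. It therefore suffices to show that every $W \in \V_n$ occurs in some $W' \in \V_{n+1}$. For this I would use that $\ol{\calP}_{n+1}$ refines $\ol{\calP}_n$ together with fullness of $\mu$: given $W \in \V_n \subseteq \ol{\calP}_n$ we have $\mu(W) > 0$, so since $\ol{\calP}_{n+1}$ is a quasi-partition there is $V \in \ol{\calP}_{n+1}$ with $\mu(W \cap V) > 0$; refinement and disjointness then force $V \subseteq W$. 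Writing $V = T^l(W')$ with $W' \in \V_{n+1}$ and $0 \le l \le |W'|-1$, the inclusion $T^l(W') \subseteq W$ with $W$ a tower base at level $n$ says exactly that $l = k_i$ and $W = W_i$ for some $i$; that is, $W$ occurs in $W'$, and injectivity follows.

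The main obstacle is this last identification: converting the set-theoretic inclusion $T^l(W') \subseteq W$ into the combinatorial statement $l = k_i$, $W = W_i$. The clean way to see it is through the orbit interpretation of the tower over $W'$ — a point of $W'$ lies in $E_n$ precisely at the times $0, k_1, \dots, k_r, |W'|$, and $T^l(W')$ lands inside a level-$n$ tower base (some $W \in \V_n$) exactly at the sub-tower starting times $l = k_i$, where it lands in $W_i$. Once this is in place, both the dimension count of the first paragraph and the injectivity argument go through, while the check that $\Phi$ is a $*$-homomorphism is purely routine block-matrix bookkeeping.
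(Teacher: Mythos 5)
Your proposal is correct and follows essentially the same route as the paper: the paper defines the same block-diagonal $*$-homomorphism (organized factor-by-factor on the source, via $\varphi_W(e_{ij}(W)) = \big(\sum_{j' \in J(W,W')} e_{i+j',j+j'}(W')\big)_{W'}$, rather than coordinate-by-coordinate on the target as you do), and proves injectivity by the same observation that every $W \in \V_n$ emits at least one arrow because the translates of the $W' \in \V_{n+1}$ form a quasi-partition of $X$. Your write-up simply makes explicit the dimension count and the identification of inclusions $T^l(W') \subseteq W$ with occurrences, which the paper leaves implicit.
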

\begin{proof}
Recall that $\gotR_n = \prod_{W \in \V_n} M_{|W|}(K)$, $\gotR_{n+1} = \prod_{W' \in \V_{n+1}} M_{|W'|}(K)$. Since each $W'$ receives a finite number of arrows in the diagram, it will be sufficient to define the connecting maps $j_n : \gotR_n \ra \gotR_{n+1}$ on each simple factor $\varphi_W : M_{|W|}(K) \ra \gotR_{n+1}$, because in this case each $j_n$ will be defined as
$$j_n : \gotR_n \ra \gotR_{n+1}, \quad (a_W)_W \mapsto \sum_W \varphi_W(a_W).$$
%To get an insight of how we can define each $\varphi_W$, we make an observation: since $\ol{\calP}_{n+1}$ is a quasi-partition of $X$, we can decompose our space $X$ via $X = \bigsqcup_{W' \in \V_{n+1}} \bigsqcup_{l=0}^{|W'|-1} T^l(W')$\footnote{This, of course, is true up to a set of measure zero.}, so by intersecting with $W$ one gets
%\begin{equation}\label{Ch2-equation-W.W'}
%W = W \cap X = \bigsqcup_{W' \in \V_{n+1}} \bigsqcup_{l=0}^{|W'|-1} W \cap T^l(W') = \bigsqcup_{W' \in 
%\V_{n+1}} \bigsqcup_{j' \in J(W,W')} T^{j'}(W')
%\end{equation}
%up to a set of measure $0$, so that for a fixed $W' \in \V_{n+1}$, $h_{W'} \cdot \chi_W = \sum_{j' \in J(W,W')} \chi_{T^{j'}(W')}$.
We define $\varphi_W$ to be the block diagonal $*$-homomorphism
$$\varphi_W(e_{ij}(W)) := \Big(\sum_{j' \in J(W,W')} e_{i+j',j+j'}(W')\Big)_{W'}.$$
%Pictorically, for a matrix $M \in M_{|W|}(K)$,
%\[ \varphi_W(M)_{W'} =  \left( \begin{array}{@{}c@{}}
%											\begin{matrix}
%												M & & & & \hspace*{-0.2cm}\bigzero \\
%												  & 0 & & & \\
%													& & M & & \\
%													& & & \ddots & \\
%												  \hspace*{0.2cm} \bigzero & & & & 0
%											\end{matrix}											
%											\end{array}\right) \in M_{|W'|}(K) \]
%where we put $M$ at each position $j' \in J(W,W')$.
Since every $W \in \V_n$ always emits at least one arrow to some $W' \in \V_{n+1}$ (this is clear since the translates of the $W'$ form a quasi-partition of $X$), the maps $\varphi_W$ are injective, and so $j_n$ is an embedding of $*$-algebras.
\end{proof}

By construction, we obtain commutative diagrams
\begin{equation}
\vcenter{
	\xymatrix{
	\cdots \ar[r] & \calA_n \ar[r]^{\iota_n} \ar[d]^{\pi_n} & \calA_{n+1} \ar[r]^{\iota_{n+1}} \ar[d]^{\pi_{n+1}} & \calA_{n+2} \ar[d]^{\pi_{n+2}} \ar[r] & \cdots \\
	\cdots \ar[r] & \gotR_n \ar[r]^{j_n} & \gotR_{n+1} \ar[r]^{j_{n+1}} & \gotR_{n+2} \ar[r] & \cdots
	}
}\label{figure-comm.diag.1}
\end{equation}

Set now $\gotR_{\infty} = \varinjlim_n (\gotR_n,j_ n)$ and $\calA_{\infty} = \varinjlim_n (\calA_n, \iota_n) = \bigcup_{n \geq 1} \calA_n $. Note that each $\gotR_n$ is a regular ring, and so is its inductive limit $\gotR_{\infty}$. Moreover, by the commutativity of the diagrams \eqref{figure-comm.diag.1} and the fact that each $\pi_n$ is injective, the algebra $\calA _{\infty}$ is obviously a $*$-subalgebra of $\gotR_{\infty}$, through the limit map $\pi_{\infty} : \calA_{\infty} \ra \gotR_{\infty}$.

A description of the algebra $\calA_{\infty}$ in terms of the crossed product is given as follows. For an open set $U$ of $X$, we denote by $C_{c,K}(U)$ the ideal of $C_K(X)$ generated by the characteristic functions $\chi_V$, where $V$ is a clopen subset of $X$ such that $V \subseteq U$.

\begin{lemma}\label{lemma-descr.Ainfty}
Let $\calA_y$ be the $*$-subalgebra of $\calA= C_K(X) \rtimes_T \Z$ generated by $C_K(X)$ and $C_{c,K}(X \backslash \{ y \})t$. Then we have $\calA_{\infty} = \calA_y$. 
\end{lemma}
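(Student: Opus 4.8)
The plan is to prove $\calA_{\infty}=\calA_y$ by a double inclusion, where one direction is essentially formal and the other is where Hypothesis~\ref{hypothesis-generate.top} does the real work.

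For the inclusion $\calA_{\infty}\subseteq \calA_y$ I would argue directly on generators. By Definition~\ref{definition-approx.alg} each $\calA_n$ is generated as a unital $*$-algebra by the elements $\chi_Z\, t$ with $Z\in\calP_n$. Since $\calP_n$ partitions $X\backslash E_n$, every such $Z$ satisfies $Z\subseteq X\backslash E_n\subseteq X\backslash\{y\}$, so $\chi_Z\in C_{c,K}(X\backslash\{y\})$ and hence $\chi_Z\, t\in C_{c,K}(X\backslash\{y\})\,t\subseteq \calA_y$. As $\calA_y$ is a unital $*$-subalgebra containing all generators of every $\calA_n$, we get $\calA_n\subseteq\calA_y$ for all $n$, and therefore $\calA_{\infty}=\bigcup_n \calA_n\subseteq\calA_y$.

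For the reverse inclusion, $\calA_{\infty}$ is itself a unital $*$-subalgebra (an increasing union of such), so it suffices to check that the two generating families of $\calA_y$, namely $C_K(X)$ and $C_{c,K}(X\backslash\{y\})\,t$, both lie in $\calA_{\infty}$. The crucial step is $C_K(X)\subseteq\calA_{\infty}$. Given a clopen set $V\subseteq X$, I would invoke Hypothesis~\ref{hypothesis-generate.top}: since $\bigcup_n(\calP_n\cup\{E_n\})$ is a basis of clopen sets and $V$ is compact, $V$ is covered by finitely many basic sets contained in it; because the partitions $\calP_n\cup\{E_n\}$ are nested and increasingly fine, for $N$ large enough each of these basic sets, and thus $V$ itself, is a disjoint union of members of $\calP_N\cup\{E_N\}$. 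For $Z\in\calP_N$ one has $\chi_Z=(\chi_Z\,t)(\chi_Z\,t)^{*}\in\calA_N$, and $\chi_{E_N}=1-\sum_{Z\in\calP_N}\chi_Z\in\calA_N$; summing the relevant characteristic functions gives $\chi_V\in\calA_N\subseteq\calA_{\infty}$, and by linearity $C_K(X)\subseteq\calA_{\infty}$.

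It then remains to place $C_{c,K}(X\backslash\{y\})\,t$ inside $\calA_{\infty}$. If $f\in C_{c,K}(X\backslash\{y\})$ then $f$ vanishes on a clopen neighborhood $U$ of $y$ (the complement of a clopen set carrying its support); as $E_n$ decreases to $\{y\}\subseteq U$, compactness yields $E_n\subseteq U$ for all large $n$, so that $f=\chi_{X\backslash E_n}\,f$. Taking $n$ also large enough that $f\in\calA_n$ (possible by the previous step), I can write $f\,t=f\,\chi_{X\backslash E_n}\,t=f\big(\sum_{Z\in\calP_n}\chi_Z\,t\big)$, a product of elements of $\calA_n$, whence $f\,t\in\calA_n\subseteq\calA_{\infty}$. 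Together with the first inclusion this gives $\calA_{\infty}=\calA_y$. The only genuinely nontrivial point is the compactness-and-refinement argument extracting, from Hypothesis~\ref{hypothesis-generate.top}, the fact that an arbitrary clopen set is eventually a union of cells of $\calP_N\cup\{E_N\}$; the rest is routine manipulation inside the crossed product.
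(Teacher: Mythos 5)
Your proof is correct and follows essentially the same route as the paper: the easy inclusion $\calA_{\infty}\subseteq\calA_y$ via the generators $\chi_Z t$, and the reverse inclusion by using Hypothesis \ref{hypothesis-generate.top} together with compactness to write clopen sets as finite disjoint unions of cells of a single $\calP_N\cup\{E_N\}$. The only (immaterial) difference is the order: the paper first puts $C_{c,K}(X\backslash\{y\})t$ into $\calA_{\infty}$ and then deduces $C_K(X)\subseteq\calA_{\infty}$ from $\chi_C=(\chi_C t)(\chi_C t)^*$, whereas you do the function algebra first and then recover $ft=f\,\chi_{X\backslash E_n}t$.
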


\begin{proof} 
$\calA_{\infty}$ is generated, as a $*$-algebra, by $1 = \chi_X \in \calA$ and the partial isometries $\chi_Z t$ for every $Z \in \bigcup_{n \geq 1} \calP_n$. It is then clear that $\calA_{\infty} \subseteq \calA _y$ because $1 = \chi_X \in C_K(X)$ and $\chi_Z t \in C_{c,K}(X \backslash \{ y \})t$ for every $Z \in \bigcup_{n \geq 1} \calP _n$. 
  
For the other inclusion, we first check that $C_{c,K}(X \backslash \{ y \})t \subseteq \calA_{\infty}$, so let $C$ be a clopen subset of $X$ such that $y \notin C$. Since $C$ is closed and $y \notin C$, there exists an index $n_0 \geq 1$ such that $E_{n_0} \cap C = \emptyset$, and so $E_n \cap C = \emptyset$ for $n \geq n_0$ (because $E_n \subseteq E_{n_0}$ for $n \geq n_0$). Since $C$ is also open and $\bigcup_{n \geq n_0} (\calP_n \cup \{ E_n \})$ generate the topology of $X$, we can write $C = \bigcup_{i \geq 1} Z_i$ for $Z_i \in \bigcup_{n \geq n_0} \calP_n$. But $C$ is also compact, so this countable union is in fact finite, and we can further assume without loss of generality that the $Z_i's$ all belong to the same $\calP_N$, and thus are pairwise disjoint. Therefore $C = \bigsqcup_{i=1}^s Z_i$. But now we get that
$$\chi_C t = \sum_{i=1}^s \chi_{Z_i}t \in \calA_N \subseteq \calA_{\infty}.$$
This shows that $C_{c,K}(X \backslash \{ y \})t \subseteq \calA_{\infty}$. Next, we show that $C_K(X) \subseteq \calA_{\infty}$. Indeed, if $C$ is a clopen subset of $X$ and $y \notin C$, then the above argument gives that $\chi_C = (\chi_Ct)(\chi_Ct)^*$ belongs to $\calA_{\infty}$. If $y \in C$ then $\chi_C = 1 - \chi_{X \backslash C}\in \calA_{\infty}$. This concludes the proof.
\end{proof}

\begin{remark}\label{remark-minimal.Cantor.systems}
An analogue of the algebra $\calA_y$ appears in the theory of minimal Cantor systems, see e.g. \cite{P89}, \cite{HPS92}, \cite{GPS}. Let $(X,\varphi)$ be a minimal Cantor system and take $y \in X$. In these papers, the $C^*$-subalgebra $A_y$ of the $C^*$-crossed product $A= C(X) \rtimes_{\varphi} \Z$ which is generated by $C(X)$ and $C(X \backslash \{ y \})u$, where $u$ is the canonical unitary in the crossed product implementing $\varphi$, is considered, and it is shown that $A_y$ is an AF-algebra.%, and that the natural inclusion of $A_y$ into $A$ induces an isomorphism of ordered abelian groups with order-unit $K_0(A_y) \cong K_0(A)$.

Although our algebra $\calA _y$ is (in general) not ultramatricial, we have shown in Lemma \ref{lemma-descr.Ainfty} that $\calA_y = \calA_{\infty}$, a direct limit of algebras $\calA_n$ which are subalgebras of infinite products of matrix algebras over $K$, which can be considered as a replacement of being just finite products of full matrix algebras over $K$.
%\textcolor{red}{PENDENT!!}It is not true in our case that the $*$-algebra $A_y$ is ultramatricial. However, the statement about $K_0$-groups is still true in our context, as we show below in Proposition \ref{Ch2-proposition-equal.Vmonoids}. Although our algebra $\calA _y$ is not ultramatricial, we have shown in Lemma \ref{Ch2-lemma-descr.Ainfty} that $\calA_y = \calA_{\infty}$, a direct limit of algebras $\calA_n$ which are subalgebras of infinite products of matrix algebras over $K$. This result can be considered as a replacement of being just finite products of full matrix algebras over $K$.\textcolor{red}{PENDENT!!}
\end{remark}

We may determine how big is the subalgebra $\calA_y = \calA_{\infty}$ inside the algebra $\calA$ in some cases of interest.
 
\begin{proposition}\label{proposition-periodic.point}
Let us assume the above notation. Suppose that $y$ is a periodic point for $T$ with period $l$. Let $I$ be the ideal of $\calA$ generated by $C_{c,K} (X \backslash \{y, T(y),\dots T^{l-1}(y)\})$. Then:
\begin{enumerate}[(i)]
\item $I$ is also an ideal of $\calA_{\infty}$, and we have $*$-algebra isomorphisms
$$\calA / I \xra{\cong} M_l(K[s,s^{-1}]), \qquad \calA_{\infty} / I \xra{\cong} M_l(K).$$
\item There exists some $M \geq 0$ such that for each $n \geq M$ there is exactly one $W_n \in \V_n$ of length $l$ and containing $y$, and such that the isomorphism $h_{W_n} \calA_n \cong M_l(K)$ given during the proof of Proposition \ref{proposition-minimal.central} coincides with the restriction of the projection map $q : \calA_{\infty} \ra \calA_{\infty}/I$ on $h_{W_n} \calA_n$, that is, the following diagram commutes.
\begin{equation}
\vcenter{
	\xymatrix{
	\calA_{\infty} \ar[r]^{q} & \calA_{\infty}/I \ar[d]^{\cong} \\
	h_{W_n} \calA_n \ar[r]^{\cong} \ar@{^{(}->}[u] & M_l(K)
	}
}\label{figure-comm.diag.2}
\end{equation}
Moreover, $h_W \in I$ for all $W \in \V_n$, $W \neq W_n$, which means that $h_W$ is the zero matrix under the composition $\calA_{\infty} \ra \calA_{\infty}/I \cong M_l(K)$.
\item $\calA_n / (I \cap \calA_n) \cong \calA_{\infty} / I \cong M_l(K)$ and $(1-h_{W_n}) \calA_n = I \cap \calA_n$ for every $n \geq M$.
\end{enumerate}
\end{proposition}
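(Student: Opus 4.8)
The plan is to handle the three parts through one common mechanism: an explicit, $\Z$-graded description of $I$, and an explicit matrix model $M_l(K[s,s^{-1}])$ for $\calA/I$ that collapses to $M_l(K)$ on $\calA_\infty/I$. Throughout I write $\calO=\{y,T(y),\dots,T^{l-1}(y)\}$ for the orbit of $y$, a finite $T$-invariant set, and $R:=C_{c,K}(X\setminus\calO)$. First I would identify $I$ explicitly. Since $T(\calO)=\calO$, for $g\in R$ one has $\alpha^{j}(g)=g\circ T^{-j}\in R$, and $R$ is stable under multiplication by $C_K(X)$; hence each $R\,t^{i}$ is stable under left/right multiplication by $C_K(X)$ and by $t^{\pm1}$, giving the graded description
$$I=\bigoplus_{i\in\Z}R\,t^{i},\qquad\text{so in particular}\qquad I\cap C_K(X)=R.$$
To place $I$ inside $\calA_\infty$ I would prove by induction on $i\ge1$ that $C_{c,K}(X\setminus\{y,\dots,T^{i-1}(y)\})\,t^{i}\subseteq\calA_\infty$; the base case is the defining generating set of $\calA_\infty=\calA_y$ from Lemma \ref{lemma-descr.Ainfty}. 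For the step, given $h$ with $\operatorname{supp}(h)\subseteq X\setminus\{y,\dots,T^{i-1}(y)\}$, put $V=T^{-(i-1)}(\operatorname{supp}(h))$; then $y\notin V$, so $\chi_V t\in\calA_\infty$, and $h\,t^{i}=(h\,t^{i-1})(\chi_V t)$ lies in $\calA_\infty$ by the inductive hypothesis. Applying the involution yields the negative powers, and since $\calO\subseteq\{y,\dots,T^{i-1}(y)\}$ I obtain $R\,t^{\pm i}\subseteq\calA_\infty$; as $I$ is already an ideal of $\calA$, it is thus an ideal of the subalgebra $\calA_\infty$.

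Next I would build an explicit homomorphism $\rho\colon\calA\to M_l(K[s,s^{-1}])$ by $\rho(f)=\operatorname{diag}(f(y),f(T(y)),\dots,f(T^{l-1}(y)))$ for $f\in C_K(X)$ and $\rho(t)=\sum_{i=0}^{l-2}e_{i+1,i}+s\,e_{0,l-1}$. A direct check gives $\rho(t)^l=s\cdot\mathbf 1$ and $\rho(t)\rho(f)\rho(t)^{-1}=\rho(\alpha(f))$ (conjugation cyclically shifts the diagonal), so $\rho$ is a well-defined surjective homomorphism; every generator of $R$ vanishes on $\calO$, so $\rho(R)=0$ and hence $\rho(I)=0$. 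Using the grading together with $C_K(X)/R\cong K^{l}$ (evaluation at the $l$ points of $\calO$, realizable since $X$ is totally disconnected), the induced $\bar\rho\colon\calA/I\to M_l(K[s,s^{-1}])$ is injective: reading off, on each cyclic diagonal, the coefficients of the powers of $s$ recovers all components $\bar f_i\in K^l$. This yields $\calA/I\cong M_l(K[s,s^{-1}])$. Restricting to $\calA_\infty$ (possible as $I\subseteq\calA_\infty$), the crucial point is that the image now omits $s$: the generators $C_K(X)$ map to diagonals, and for $\chi_C t$ with $y\notin C$ the matrix $\rho(\chi_C)$ has vanishing $(0,0)$-entry, so that
$$\bar\rho(\chi_C t)=\operatorname{diag}\bigl(0,\chi_C(T(y)),\dots,\chi_C(T^{l-1}(y))\bigr)\rho(t)=\sum_{i=0}^{l-2}\chi_C(T^{i+1}(y))\,e_{i+1,i}$$
carries no factor of $s$, the wrap-around term $s\,e_{0,l-1}$ being killed. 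Thus $\bar\rho(\calA_\infty/I)\subseteq M_l(K)$, and since the $e_{i+1,i}$, $e_{i,i+1}$ and the diagonal idempotents generate $M_l(K)$, equality holds, giving $\calA_\infty/I\cong M_l(K)$ and completing (i).

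For (ii), since $T^{j}(y)\neq y$ for $1\le j\le l-1$ and $E_n\downarrow\{y\}$, there is $M$ with $T^{j}(y)\notin E_n$ for all such $j$ and all $n\ge M$; with $T^{l}(y)=y\in E_n$ this makes the first return time of $y$ to $E_n$ equal to $l$, so $y\in Y_l^0$ (notation of Lemma \ref{lemma-Rokhlin}) and the unique $W_n\in\V_n$ containing $y$ has length $l$. For $n\ge M$ one has $W_n\cap\calO=\{y\}$, whence $\rho(\chi_{W_n})=e_{00}$, while $\rho(\chi_{X\setminus E_n})=\operatorname{diag}(0,1,\dots,1)$ gives $\bar\rho(\chi_{X\setminus E_n}t)=\sum_{i=0}^{l-2}e_{i+1,i}=:N$. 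Substituting into the matrix units $e_{ij}(W_n)=(\chi_{X\setminus E_n}t)^{i}\chi_{W_n}(t^{-1}\chi_{X\setminus E_n})^{j}$ of Proposition \ref{proposition-minimal.central}, a short computation gives $\bar\rho(q(e_{ij}(W_n)))=N^{i}e_{00}(N^{*})^{j}=e_{ij}$, which is exactly the image of $e_{ij}(W_n)$ under the isomorphism $h_{W_n}\calA_n\cong M_l(K)$ of that proposition; as these matrix units generate, the square \eqref{figure-comm.diag.2} commutes. Finally, for $W\in\V_n$ with $W\neq W_n$, each translate $T^{l'}(W)$ is a block of $\ol{\calP}_n$ distinct from, hence disjoint from, the blocks $T^{j}(W_n)$ that contain the orbit points $T^{j}(y)$; so $T^{l'}(W)$ misses $\calO$, forcing $\rho(\chi_{T^{l'}(W)})=0$ and therefore $q(h_W)=0$, i.e. $h_W\in I$.

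For (iii), $h_{W_n}$ is central in $\calA_n$ and $q(h_{W_n})=\sum_{l'=0}^{l-1}e_{l'l'}=\mathbf 1$, so $q(1-h_{W_n})=0$ and $(1-h_{W_n})\calA_n\subseteq I\cap\calA_n$. Conversely, if $a\in\calA_n$ with $q(a)=0$, then $q(h_{W_n}a)=q(h_{W_n})q(a)=0$, and since $q$ is injective on $h_{W_n}\calA_n$ by (ii) this gives $h_{W_n}a=0$, so $a=(1-h_{W_n})a$; hence $I\cap\calA_n=(1-h_{W_n})\calA_n$ and $\calA_n/(I\cap\calA_n)\cong h_{W_n}\calA_n\cong M_l(K)\cong\calA_\infty/I$. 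I expect the main obstacle to lie in the middle step: verifying injectivity of $\bar\rho$ on $\calA/I$ (the bookkeeping of $s$-powers along cyclic diagonals) and, above all, recognizing conceptually that passing from $\calA$ to $\calA_\infty=\calA_y$ is precisely what deletes the Laurent variable $s$, since the "return" generator $t$ producing $s$ is exactly what is absent from $\calA_y$, all of whose partial-isometry generators $\chi_C t$ vanish at $y$; the inclusion $I\subseteq\calA_\infty$ is the other point requiring genuine care.
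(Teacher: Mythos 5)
Your proof is correct and follows essentially the same route as the paper: an explicit representation $\rho$ sending $f\mapsto\operatorname{diag}(f(y),\dots,f(T^{l-1}(y)))$ and $t$ to a cyclic shift (your $\rho$ just merges the paper's $\Psi$ and $\ol{\Psi}$ by putting $s$ directly in the corner entry), identification of $\ker\rho$ with the graded ideal $I=\bigoplus_i R\,t^i$, the same telescoping factorization to get $I\subseteq\calA_\infty$, and the same choice of $M$ and computation $e_{ij}(W_n)\mapsto e_{ij}$ in part (ii). The only cosmetic deviation is in (iii), where you use injectivity of $q$ on $h_{W_n}\calA_n$ in place of the paper's modular-law-plus-simplicity argument; both are equally valid.
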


\begin{proof}
$(i)$ It is clear that $I \subseteq \calA_{\infty}$ because the set $X \backslash \{y,T(y),\dots,T^{l-1}(y)\}$ is an invariant open subset of $X$ and so all elements of $I$ are of the form $\sum_{i=-m}^n f_it^i$ where $f_i \in C_{c,K}(X \backslash \{y, T(y),\dots, T^{l-1}(y)\}) \subseteq C_K(X) \subseteq \calA_y = \calA_{\infty}$; hence if $U_i$ denotes the support of $f_i$, which is a clopen subset of $X$, then
$$f_it^i = f_i \cdot \chi_{U_i} t^i = f_i (\chi_{U_i} t) (\chi_{T^{-1}(U_i)} t) \cdots (\chi_{T^{-i+1}(U_i)} t) \in \calA_y = \calA_{\infty}.$$

Define a map $\Psi \colon \calA \to M_l(K[t,t^{-1}])$ by sending $f \in C_K(X)$ to the diagonal matrix
$$\text{diag}(f(y),...,f(T^{l-1}(y))),$$
and sending $t$ to the matrix $u = t( \sum_{i=0}^{n-2}e_{i+1,i} + e_{0,n-1})$, where $\{e_{ij}\}_{0 \leq i,j \leq n-1}$ are the canonical matrix units in $M_n(K[t,t^{-1}])$.
%\[f \mapsto \left( \begin{array}{@{}c@{}}
%											\begin{matrix}
%												f(y) & & & \bigzero \\
%												  & f(T(y)) & & \\
%													& & \ddots & \\
%													\hspace*{0.2cm} \bigzero & & & f(T^{l-1}(y))
%											\end{matrix}											
%											\end{array}\right), \qquad t \mapsto \left( \begin{array}{@{}c@{}}
%																									\begin{matrix}
%																										0 & & & 0 & t \\
%																										t	& 0 & & & 0 \\
%																											& \ddots & \ddots & & \\
%																											& & \ddots & 0 & \\
%																										\hspace*{0.2cm} \bigzero & & & t & 0
%																									\end{matrix}											
%																									\end{array}\right) = u. \]
It is easily verified that
$$u \Psi(f) u^{-1} = \Psi(T(f))$$
for $f \in C_K(X)$.
%\begin{align*}
% u \Psi(f) u^{-1} = \begin{pmatrix}
%											0 & & & & t \\
%											t & 0 & & & \\
%											& \ddots & \ddots & & \\
%											& & \ddots & 0 & \\
%											& & & t & 0
%										 \end{pmatrix} & \begin{pmatrix}
%																		f(y) & & & \\
%																		& f(T(y)) & & \\
%																		& & \ddots & \\
%																		& & & f(T^{l-1}(y))
%																	 \end{pmatrix} \begin{pmatrix}
%																									0 & t^{-1} & & & \\
%																									& 0 & \ddots & & \\
%																									& & \ddots & \ddots & \\
%																									& & & 0 & t^{-1} \\
%																									t^{-1} & & & & 0
%																								 \end{pmatrix} \\
%& = \begin{pmatrix}
%			f(T^{l-1}(y)) & & & \\
%			& f(y) & & \\
%			& & \ddots & \\
%			& & & f(T^{l-2}(y))
%		\end{pmatrix} = \Psi(T(f))
%\end{align*}
It follows from the universal property of the crossed product that there is a unique $*$-homomorphism $\Psi : \calA \ra M_l(K[t,t^{-1}])$ extending the above assignments. For an element $f_n t^n \in \calA$ with $n \geq 0$ we have

\[ \Psi(f_n t^n) =  \left( \begin{array}{@{}c|c@{}}
											\hugezero_{\ol{n} \times (l-\ol{n})} &
											\begin{matrix}
												f_n(y)t^n & & \bigzero \\
													& \ddots & \\
												\bigzero & & f_n(T^{\ol{n}-1}(y)) t^n
											\end{matrix} \\
											\\
											\hline \\
											\begin{matrix}
											f_n(T^{\ol{n}}(y)) t^n & & \bigzero \\
											 & \ddots & \\
											 \bigzero & & f_n(T^{l-1}(y))t^n
											\end{matrix} & \hugezero_{(l-\ol{n}) \times \ol{n}}
											
											\end{array}\right) \]
where $\ol{n}$ denotes the unique integer $0 \leq \ol{n} \leq l-1$ such that $n \equiv \ol{n}$ modulo $l$. We can analogously compute it for $n < 0$. In fact, for an arbitrary element $x = \sum_{n \in \Z} f_n t^n \in \calA$ one can check that, 
for $0 \leq i,j \leq l-1$, the $(i,j)$-component of $\Psi(x)$ is given by
\begin{equation}\label{equation-image.Psi}
(\Psi(x))_{i,j} = \sum_{n \in \Z} f_{nl + (i-j)}(T^i(y)) t^{nl + (i-j)} = \Big( \sum_{n \in \Z} f_{nl + (i-j)}(T^i(y)) t^{nl} \Big) t^{i-j}.
\end{equation}
It follows from this that the kernel of $\Psi$ is precisely the ideal $I$. %Observe that $I \subseteq \ker(\Psi)$ since every characteristic function $\chi_V$, where $V \subseteq X$ is a clopen subset of $X$ and $y, T(y), ..., T^{l-1}(y) \notin V$, vanishes over this set, and so $\Psi(\chi_V a) = 0$ for any $a \in \calA$. Conversely, if $x = \sum_{n \in \Z} f_n t^n \in \ker(\Psi)$, then for any $0 \leq i,j \leq l-1$
%$$0 = (\Psi(x))_{i,j} = \sum_{n \in \Z} f_{nl + (i-j)}(T^i(y)) t^{nl + (i-j)},$$
%which means that each function $f_n$ vanishes on the set $\{y, T(y), ..., T^{l-1}(y)\}$, so $f_n \in I$. Therefore $x \in I$.
The image of $\Psi$ is given by the subalgebra
$$\calS_l := \{ X \in M_l(K[t,t^{-1}]) \mid X_{ij} \in K[t^l, t^{-l}] t^{i-j} \text{ for all } 0 \leq i,j \leq l-1 \}.$$
That is, each entry is a polynomial in $t, t^{-1}$ of the form $X_{ij} = p_{ij}(t^l, t^{-l}) t^{i-j}$, where $p_{ij}(s,s^{-1}) \in K[s, s^{-1}]$. There is a $*$-isomorphism between $\calS_l$ and the $*$-algebra $M_l(K[s,s^{-1}])$ by defining
$$\ol{\Psi} : \calS_l \ra M_l(K[s,s^{-1}]), \quad X = (X_{ij}) \mapsto Y = (Y_{ij}) \text{ with } Y_{ij} = p_{ij}(s,s^{-1}).$$
Putting everything together, we obtain a $*$-isomorphism $\calA / I \cong \calS_l \cong M_l(K[s,s^{-1}])$, as desired.

Now, we restrict the map $\Psi$ to $\calA_{\infty}$. Since $I \subseteq \calA_{\infty}$, the kernel of this restriction is again $I$, so we only need to study its image. Take $x = \sum_{n \in \Z} f_n t^n \in \calA_{\infty}$, so $x \in \calA_N$ for some $N \geq 1$. We see from the restrictions of the coefficients (see the paragraph just before Observation \ref{observation-approx.t}, also Lemma \ref{lemma-descr.Ainfty}) that $f_n = \chi_{X \backslash (E_N \cup \cdots \cup T^{n-1}(E_N))} f_n$ and $f_{-n} = \chi_{X \backslash (T^{-1}(E_N) \cup \cdots \cup T^{-n}(E_N))} f_{-n}$ for $n \geq 1$, so by \eqref{equation-image.Psi},
\begin{equation*}
 (\Psi(x))_{i,j} = f_{i-j}(T^i(y))t^{i-j}.
\end{equation*}
%\begin{equation*}
% (\Psi(x))_{i,j} = \begin{cases}
%											f_0(T^i(y)) &\text{for } i=j \\
%											f_{i-j}(T^i(y))t^{i-j} + f_{-l+(i-j)}(T^i(y))t^{-l+(i-j)} &\text{for } i>j \\
%											f_{l+i-j}(T^i(y))t^{l+(i-j)} + f_{i-j}(T^i(y))t^{i-j} &\text{for } i<j
%										\end{cases}
%\end{equation*}
%together with the fact that both $f_{i-j}(T^i(y))$ and $f_{-l+(i-j)}(T^i(y))$ ($f_{l+i-j}(T^i(y))$ and $f_{i-j}(T^i(y))$) cannot be nonzero at the same time, in the sense that either both are zero or there is exactly one that is (possibly) nonzero.
It follows from this that the image of $\calA_{\infty}$ under the composition $\ol{\Psi} \circ \Psi$ is precisely $M_l(K)$.

$(ii)$ Take $M \geq 0$ such that $T(y), ..., T^{l-1}(y) \notin E_M$, so that $T(y), ..., T^{l-1}(y)\notin E_n$ for $n \geq M$. From now on, fix $n \geq M$. In this case, there are unique sets $Z_1,...,Z_{l-1} \in \calP_n$ such that $T(y) \in Z_1, ..., T^{l-1}(y) \in Z_{l-1}$. Take then
$$W_n := E_n \cap T^{-1}(Z_1) \cap \cdots \cap T^{-l+1}(Z_{l-1}) \cap T^{-l}(E_n),$$
which is nonempty since $y \in W_n$, and $|W_n| = l$. Note that $W_n$ is the unique satisfying these properties.

In order to prove the commutativity of the diagram \eqref{figure-comm.diag.2} it is enough to prove that, for $0 \leq i,j \leq l-1$, the elements $e_{ij}(W_n) \in h_{W_n}\calA_n \subseteq \calA_{\infty}$ correspond to the matrix units $e_{ij}$ under the composition $\calA_{\infty} \ra \calA_{\infty} / I \cong M_l(K)$, but by $(i)$,
$$e_{ij}(W_n) = (\chi_{X \backslash E_n}t)^i \chi_{W_n} (t^{-1} \chi_{X \backslash E_n})^j \stackrel{\Psi}{\longmapsto} t^{i-j} e_{ij} \stackrel{\ol{\Psi}}{\longmapsto} e_{ij}$$
as we wanted to show. Therefore we obtain a $*$-isomorphism $\calA_{\infty}/I \cong h_{W_n} \calA_n$ given by $e_{ij}(W_n) + I \mapsto e_{ij}(W_n)$. For $W \in \V_n$ with $W \neq W_n$, the idempotents $h_W$ and $h_{W_n}$ are orthogonal, and so $h_W$ is the zero matrix in $h_{W_n} \calA_n$ under the previous $*$-isomorphism. That means $h_W \in I$, as required.

$(iii)$ Clearly $1 - h_{W_n} \in \calA_n$. Under $\calA_{\infty}/I \cong h_{W_n} \calA_n \cong M_l(K)$, the element $(1-h_{W_n}) + I$ corresponds to the zero matrix, so $1-h_{W_n} \in I$ too.

Define $I_n = (1-h_{W_n})\calA_n$. From the previous observation, $I_n \subseteq I \cap \calA_n$, and we aim to show the reverse inclusion. %Since $h_{W_n}$ is a central idempotent in $\calA_n$, we have a decomposition
%$$\calA_n = (1 - h_{W_n})\calA_n \oplus h_{W_n}\calA_n = I_n \oplus h_{W_n}\calA_n$$
%so $h_{W_n}\calA_n \cong \calA_n / I_n$ through $e_{ij}(W_n) \mapsto e_{ij}(W_n) + I_n$.
By the modular law be have
$$I \cap \calA_n = I \cap (I_n \oplus h_{W_n}\calA_n) = I_n \oplus (h_{W_n} \calA_n \cap I).$$
%Now, we also have a commutative diagram
%\begin{equation*}
%\vcenter{
%	\xymatrix{
%	& \calA_n \ar@{^{(}->}[r] \ar[d] & \calA_{\infty} \ar[d] & \\
%	h_{W_n}\calA_n \ar[r]^{\cong} & \calA_n / I_n \ar[r] & \calA_{\infty} / I, & e_{ij}(W_n) \mapsto e_{ij}(W_n) + I_n \mapsto e_{ij}(W_n) + I.
%	}
%}
%\end{equation*}
%But from $(i)$, the composition $h_{W_n} \calA_n \cong \calA_n / I_n \ra \calA_{\infty} / I$, $e_{ij}(W_n) \mapsto e_{ij}(W_n) + I$ is already a $*$-isomorphism. This has two consequences: first, $\calA_n / I_n \cong \calA_{\infty} / I$ canonically, and second, $h_{W_n}\calA_n \cap I = \{0\}$, so $I_n = I \cap \calA_n$.
Since $1 \notin I$ and $h_{W_n} \calA_n$ is simple, we deduce that $h_{W_n} \calA_n \cap I = \{0\}$, and thus $I \cap \calA_n = I_n$. The rest follows trivially.\hfill\qedhere
\end{proof}

\subsection{A rank function on \texorpdfstring{$\calA$}{}}\label{subsection-rank.function.A}

We pass to study the possible rank functions that the $*$-algebras $\calA_{\infty}$, $\calA$ can admit. To start this study, we first concentrate our attention on the approximating algebras $\calA_n$ and the embeddings $\pi_n : \calA_n \hookrightarrow \gotR_n$.

We define a rank function $\rk_{\gotR_n}$ on $\gotR_n = \prod_{W \in \V_n} M_{|W|}(K)$ by taking a concrete convex combination of the normalized rank functions $\rk_{|W|} = \frac{\Rk}{|W|}$ 
on the matrix algebras $M_{|W|}(K)$ (here $\Rk$ denotes the usual rank function of matrices $M \in M_{|W|}(K)$). Namely, we take $\alpha_W = |W| \mu (W)$, where $W \in \V _n$, and define
$$\rk_{\gotR_n}(x) = \sum_{W \in \V_n} \alpha_W \rk_{|W|}(x_W)$$
for $x = (x_W)_W \in \gotR_n$. By Lemma \ref{lemma-quasi.partition},
$$\sum_{W \in \V_n} \alpha_W = \sum _{k \geq 1} \sum_{\substack{W \in \V_n \\ |W| = k}} k \mu(W) = \mu(X) = 1,$$
so we get that $\rk_{\gotR_n}$ is indeed a rank function on $\gotR_n$, and a faithful one since $\alpha_W \neq 0$ for all $W \in \V_n$. Moreover, the embeddings $j_n \colon \gotR_n \to \gotR_{n+1}$ are rank-preserving. To show this, we only have to prove that
\begin{equation}\label{equation-W.W'}
\mu(W) = \sum_{W' \in \V_{n+1}} |J(W,W')| \mu (W'),
\end{equation}
since then for $x \in \gotR_n$,
\begin{align*}
\rk_{\gotR_{n+1}}(j_n(x)) & = \sum_{W' \in \V_{n+1}} \sum_{W \in \V_n} \mu(W') |W'| \rk_{|W'|}(\varphi_W(x_W)_{W'}) = \sum_{W \in \V_n} \sum_{W' \in \V_{n+1}} \mu(W') \Rk(\varphi_W(x_W)_{W'}) \\
& = \sum_{W \in \V_n} \Big( \sum_{W' \in \V_{n+1}} \mu(W') |J(W,W')| \Big) \Rk(x_W) = \sum_{W \in \V_n} \mu(W) |W| \rk_{|W|}(x_W) = \rk_{\gotR_n}(x).
\end{align*}
But now suppose that $W$ is as in \eqref{equation-sets.for.Pn}. We can write our space $X$ as $X = \bigsqcup_{W' \in \V_{n+1}} \bigsqcup_{l=0}^{|W'|-1} T^l(W')$ up to a set of measure zero, so by intersecting with $W$ one gets
\begin{equation}
W = \bigsqcup_{W' \in \V_{n+1}} \bigsqcup_{l=0}^{|W'|-1} W \cap T^l(W') = \bigsqcup_{W' \in 
\V_{n+1}} \bigsqcup_{j' \in J(W,W')} T^{j'}(W')
\end{equation}
up to a set of measure $0$. From this the equality \eqref{equation-W.W'} follows by invariance of $\mu$.
%$$\mu (W) = \sum _{W'\in \V_{n+1}} \sum_{j' \in J(W,W')} \mu (T^{j'} (W')) = \sum _{W' \in \V_{n+1}} |J(W,W')| \mu (W').$$

With this, we can define a faithful rank function on the inductive limit $\gotR_{\infty} = \varinjlim (\gotR_n, j_n)$ induced from the rank functions $\rk_{\gotR_n}$, which we will denote by $\rk_{\gotR_{\infty}}$. In the next lemma we show that we also have compatibility of our measure $\mu$ and this new rank function $\rk_{\gotR_{\infty}}$.

\begin{lemma}\label{lemma-compat.measure}
Let $\pi_n : \calA_n \ra \gotR_n$ and $\pi_{\infty} : \calA_{\infty} \ra \gotR_{\infty}$ be the canonical inclusions. Then:
\begin{enumerate}[i)]
\item The equality $\mu(Z)= \rk_{\gotR_n}(\pi_n(\chi_Z))$ holds for all $Z \in \calP_n \cup \{ E_n \}$. Moreover,
\item $\mu(U) = \rk_{\gotR_{\infty}}(\pi_{\infty}(\chi_U))$ for all clopen subset $U$ of $X$.
\end{enumerate}
\end{lemma}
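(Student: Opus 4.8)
The plan is to reduce everything to counting how the translates $T^l(W)$ of the generating sets $W\in\V_n$ sit inside a given clopen piece, and then to invoke $T$-invariance of $\mu$. First I would unwind the rank on $\gotR_n$: since $\alpha_W\rk_{|W|}=|W|\mu(W)\cdot\tfrac{\Rk}{|W|}=\mu(W)\Rk$, one has $\rk_{\gotR_n}(x)=\sum_{W\in\V_n}\mu(W)\,\Rk(x_W)$ for $x=(x_W)_W$.

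For part i), I would compute $\pi_n(\chi_Z)=(h_W\cdot\chi_Z)_W$ componentwise. As $\chi_Z\in\calB_0$ and $h_W=\sum_{l=0}^{|W|-1}\chi_{T^l(W)}=\sum_l e_{ll}(W)$, we get $h_W\cdot\chi_Z=\sum_{l=0}^{|W|-1}\chi_{T^l(W)\cap Z}$. The key observation is that the quasi-partition $\ol{\calP}_n$ refines $\calP_n\cup\{E_n\}$, which is exactly the content of Lemma \ref{lemma-quasi.partition}; hence each $T^l(W)\in\ol{\calP}_n$ is either contained in $Z$ or disjoint from it, so $h_W\cdot\chi_Z=\sum_{l:\,T^l(W)\subseteq Z}e_{ll}(W)$ is a diagonal idempotent with $\Rk(h_W\cdot\chi_Z)=\#\{\,0\le l\le|W|-1\mid T^l(W)\subseteq Z\,\}$. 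Substituting and using $\mu(W)=\mu(T^l(W))$ by invariance, I would rewrite
$$\rk_{\gotR_n}(\pi_n(\chi_Z))=\sum_{W\in\V_n}\ \sum_{l:\,T^l(W)\subseteq Z}\mu(T^l(W)).$$
The sets $T^l(W)$ ($W\in\V_n$, $0\le l\le|W|-1$) are precisely the members of $\ol{\calP}_n$, and those contained in $Z$ cover $Z$ up to a null set because $\ol{\calP}_n$ refines $\calP_n\cup\{E_n\}$; therefore the right-hand side collapses to $\mu(Z)$, proving i).

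For part ii), I would first reduce an arbitrary clopen $U$ to a disjoint union of partition pieces at a single level. By Hypothesis \ref{hypothesis-generate.top} together with the fact that the nested partitions $\calP_n\cup\{E_n\}$ are refining, the family $\bigcup_n(\calP_n\cup\{E_n\})$ is closed under intersection and so forms a basis of clopen sets; compactness of $U$ then yields an $N$ with $U=\bigsqcup_{Z\in S}Z$ for some $S\subseteq\calP_N\cup\{E_N\}$. Each such $\chi_Z$ lies in $\calA_N$ (indeed $\chi_Z=(\chi_Zt)(\chi_Zt)^{*}$ for $Z\in\calP_N$, and $\chi_{E_N}=1-\sum_{Z\in\calP_N}\chi_Z$), so $\chi_U\in\calA_N\subseteq\calA_{\infty}$ and $\pi_{\infty}(\chi_U)$ is the image of $\pi_N(\chi_U)$. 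Since the connecting maps $j_n$ are rank-preserving, $\rk_{\gotR_{\infty}}$ restricts to $\rk_{\gotR_N}$ on $\gotR_N$, whence $\rk_{\gotR_{\infty}}(\pi_{\infty}(\chi_U))=\rk_{\gotR_N}(\pi_N(\chi_U))$. Finally $\chi_U=\sum_{Z\in S}\chi_Z$ is an orthogonal sum of idempotents, their images under $\pi_N$ remain orthogonal idempotents, so additivity (property c of the rank) together with part i) gives $\rk_{\gotR_N}(\pi_N(\chi_U))=\sum_{Z\in S}\mu(Z)=\mu(U)$.

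The main obstacle I expect is the bookkeeping in part i): correctly identifying $h_W\cdot\chi_Z$ as a diagonal idempotent by means of the refinement property of $\ol{\calP}_n$, and then recognizing the resulting double sum as a sum over all of $\ol{\calP}_n$ restricted to $Z$, so that invariance of $\mu$ makes it telescope to $\mu(Z)$. Part ii) should then be a routine combination of the basis/compactness reduction with the additivity of the rank on orthogonal decompositions and the fact that the $j_n$ preserve rank.
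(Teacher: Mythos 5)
Your proposal is correct and follows essentially the same route as the paper: compute $h_W\cdot\chi_Z$ as a diagonal idempotent whose rank counts the translates $T^l(W)$ lying in $Z$, sum using $T$-invariance and the quasi-partition property, and then deduce ii) from the generating/refining hypothesis plus additivity. The only cosmetic difference is that the paper handles $Z=E_n$ and $Z\in\calP_n$ by separate explicit matrix-unit computations ($e_{00}(W)$ and $\sum_{j:Z_j=Z}e_{jj}(W)$ respectively), whereas you unify both cases via the refinement $\calP_n\cup\{E_n\}\precsim\ol{\calP}_n$.
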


\begin{proof}
Let us prove the first formula. For $Z = E_n$, by the computation done at the end of Section \ref{subsection-quasi.partitions.representation.B},
$$\rk_{\gotR_n}(\pi_n(\chi_{E_n})) = \sum_{W \in \V_n} \alpha_W \rk_{|W|}(e_{00}(W)) = \sum_{W \in \V_n} \mu(W) = \mu\Big( \bigsqcup_{W \in \V_n} W \Big) = \mu(E_n),$$
where we have used that the sets $\{W\}_{W \in \V_n}$ form a quasi-partition of $E_n$, see Lemma \ref{lemma-quasi.partition}. For $Z \in \calP_n$, also by Lemma \ref{lemma-quasi.partition} the sets $\{Z \cap \ol{W}\}_{\ol{W} \in \ol{\calP}_n} = \{Z \cap T^l(W)\}_{\substack{W \in \V_n \\ 0 \leq l \leq |W|-1}}$ form a quasi-partition of $Z$. Therefore if $W$ is as in \eqref{equation-sets.for.Pn}, then $h_W \cdot \chi_Z = %\sum_{l=0}^{|W|-1} \chi_{T^l(W) \cap Z} = 
\sum_{j : Z_j = Z} e_{jj}(W)$, so
\begin{align*}
\rk_{\gotR_n}(\pi_n(\chi_Z)) & = \sum_{W \in \V_n} \alpha_W \rk_{|W|}\Big( \sum_{j : Z_j = Z} e_{jj}(W) \Big) = \sum_{W \in \V_n} \mu(W) |\{j \mid Z_j = Z\}| \\
& = \sum_{W \in \V_n} \sum_{l=0}^{|W|-1} \mu(T^l(W) \cap Z) = \mu \Big( \bigsqcup_{W \in \V_n} \bigsqcup_{l=0}^{|W|-1} T^l(W) \cap Z \Big) = \mu(Z).
\end{align*}
As a consequence, $\mu(Z) = \rk_{\gotR_{\infty}}(\pi_{\infty}(\chi_Z))$ for all $Z \in \bigcup_{n \geq 1}(\calP_n \cup \{ E_n \})$. Since $\bigcup_{n \geq 1} (\calP_n \cup \{ E_n \})$ generates the topology of $X$, every clopen subset $U$ of $X$ can be written as a finite (disjoint) union of elements of the partitions $\calP_n \cup \{ E_n\}$, so we get that $\mu(U) = \rk_{\gotR_{\infty}}(\pi_{\infty}(\chi_U))$.
\end{proof}

Using this rank function we will define rank functions over $\calA_{\infty}, \calA$. To this aim, we would like to embed our whole algebra $\calA$ inside $\gotR_{\infty}$, but this is (in general) not possible. What we will do is to embed $\calA$ inside the \textit{rank completion} $\gotR_{\rk}$ of $\gotR_{\infty}$ with respect to its rank function $\rk_{\gotR_{\infty}}$.

From now on we will not write down explicitly the maps $\pi_n, \pi_{\infty}$ and $j_n$, so we will identify

\begin{equation}
\vcenter{
	\xymatrix{
	\calA_n \ar@{^{(}->}[r] \ar@{^{(}->}[d] & \calA_{n+1} \ar@{^{(}->}[d] \ar@{^{(}->}[r] & \calA_{n+2} \ar@{^{(}->}[d] \ar@{^{(}->}[r] & \cdots \ar@{^{(}->}[r] & \calA_{\infty} \ar@{^{(}->}[d] \ar@{^{(}->}[r] & \calA \\
	\gotR_n \ar@{^{(}->}[r] & \gotR_{n+1} \ar@{^{(}->}[r] & \gotR_{n+2} \ar@{^{(}->}[r] & \cdots \ar@{^{(}->}[r] & \gotR_{\infty} \ar@{^{(}->}[r] & \gotR_{\rk}
	}
}\label{figure-comm.diag.3}
\end{equation}
whenever convenient.

\begin{theorem}\label{theorem-completion.A.Ay}
Let $\gotR_{\rk}$ be the rank completion of the regular rank ring $\gotR_{\infty}$ with respect to the rank function $\rk_{\gotR_{\infty}}$. We denote by $\rk_{\gotR_{\rk}} := \ol{\rk_{\gotR_{\infty}}}$ the rank function on $\gotR_{\rk}$ extended from $\rk_{\gotR_{\infty}}$. We then have an embedding
$$\calA \hookrightarrow \gotR_{\rk}$$
that induces a faithful Sylvester matrix rank function, denoted by $\rk_{\calA}$, on $\calA$. In turn, the natural inclusion $\calA_{\infty} \subseteq \calA$ induces a faithful Sylvester matrix rank function, denoted by $\rk_{\calA_{\infty}}$, on $\calA_{\infty}$.

Moreover, we have $\ol{\calA_{\infty}}^{\rk_{\calA_{\infty}}} = \ol{\calA}^{\rk_{\calA}} = \gotR_{\rk}$. 
\end{theorem}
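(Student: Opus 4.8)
The plan is to extend the embedding $\pi_{\infty}\colon \calA_{\infty}\hookrightarrow \gotR_{\infty}$ to all of $\calA$. By Lemma \ref{lemma-descr.Ainfty} the only generator of $\calA=C_K(X)\rtimes_T\Z$ missing from $\calA_{\infty}=\calA_y$ is the unitary $t$, so it suffices to produce a suitable image $u\in\gotR_{\rk}$ for it. First I would set $u_n:=\pi_{\infty}(\chi_{X\setminus E_n}t)$ (note $\chi_{X\setminus E_n}t\in\calA_n\subseteq\calA_{\infty}$) and show $(u_n)_n$ is Cauchy for $\rk_{\gotR_{\infty}}$: for $m>n$ one has $u_m-u_n=\pi_{\infty}(\chi_{E_n\setminus E_m}t)$, and since $\chi_V t$ is a partial isometry with $(\chi_V t)(\chi_V t)^{*}=\chi_V$, Lemma \ref{lemma-compat.measure} yields $\rk_{\gotR_{\infty}}(u_m-u_n)=\mu(E_n\setminus E_m)\le\mu(E_n)\to\mu(\{y\})=0$, where atomlessness of $\mu$ is crucial. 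Let $u\in\gotR_{\rk}$ be the limit. Using continuity of the ring operations and $\rk(x^{*})=\rk(x)$, the identities $u_nu_n^{*}=\chi_{X\setminus E_n}$ and $u_n^{*}u_n=\chi_{X\setminus T^{-1}(E_n)}$ pass to the limit to give $uu^{*}=u^{*}u=1$, so $u$ is a unitary; similarly $u_n\chi_U u_n^{*}=\chi_{(X\setminus E_n)\cap T(U)}\to\chi_{T(U)}$ shows $u\,\pi_{\infty}(\chi_U)\,u^{*}=\pi_{\infty}(\chi_{T(U)})$ for every clopen $U$, i.e. $u$ implements $\alpha$ on $\pi_{\infty}(C_K(X))$. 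The universal property of the crossed product then produces a unique unital $*$-homomorphism $\Phi\colon\calA\to\gotR_{\rk}$ with $\Phi|_{C_K(X)}=\pi_{\infty}$ and $\Phi(t)=u$, and it restricts to $\pi_{\infty}$ on $\calA_{\infty}$.

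The main work is to prove $\Phi$ injective, and I expect this to be the crux. Take $0\ne a=\sum_{i=-n}^{m}f_it^i\in\ker\Phi$. After replacing $a$ by $t^{-i_0}a$ for a suitable $i_0$ (still in $\ker\Phi$, since $u$ is invertible) I may assume $f_0\ne 0$, so $f_0\equiv\lambda\ne 0$ on some nonempty clopen $U$. The key is to find a nonempty clopen $V\subseteq U$ whose translates $V,T(V),\dots,T^{N}(V)$, with $N=\max(m,n)$, are pairwise disjoint; then $\chi_V a\chi_V=\lambda\chi_V$, because every term $\chi_V f_i\chi_{T^i(V)}t^i$ with $i\ne 0$ vanishes. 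Applying $\Phi$ gives $0=\Phi(\chi_V a\chi_V)=\lambda\,\pi_{\infty}(\chi_V)$, contradicting $\rk_{\gotR_{\infty}}(\pi_{\infty}(\chi_V))=\mu(V)>0$ (fullness of $\mu$). To build such a tower base $V$ I would apply Lemma \ref{lemma-Rokhlin} to a clopen $E'\subseteq U$ chosen with $0<\mu(E')<1/N$ (possible as $\mu$ is atomless): since $\sum_{k\ge 1}k\,\mu(Y_k^0)=1$, not every first-return time can be $\le N$, so some $Y_k^0$ with $k>N$ is a nonempty clopen set whose first $N+1$ translates are disjoint, and I take $V=Y_k^0$. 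This is precisely the step where ergodicity and atomlessness of $\mu$ enter, and it replaces the positivity-of-trace argument of the analytic setting, thereby working for an \emph{arbitrary} field $K$.

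With $\Phi$ injective, I define $\rk_{\calA}:=\rk_{\gotR_{\rk}}\circ\Phi$ on matrices over $\calA$. As the pullback along a unital ring homomorphism of the (unique matricial extension of the) rank function $\rk_{\gotR_{\rk}}$ on the regular ring $\gotR_{\rk}$, it is a Sylvester matrix rank function, and it is faithful since $\Phi$ is injective and $\rk_{\gotR_{\rk}}$ is faithful. Restricting to $\calA_{\infty}$, where $\Phi=\pi_{\infty}$, gives $\rk_{\calA_{\infty}}=\rk_{\gotR_{\infty}}\circ\pi_{\infty}$, again faithful; both $\pi_{\infty}$ and $\Phi$ become isometries for the resulting rank metrics.

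For the final ("Moreover") assertion I would show that $\pi_{\infty}(\calA_{\infty})$ is $\rk$-dense in $\gotR_{\rk}$. By Proposition \ref{proposition-embedding.B.R}, $\pi_n(\calA_n)\supseteq\bigoplus_{W\in\V_n}M_{|W|}(K)$, and since $\sum_{W\in\V_n}\alpha_W=1<\infty$, the finitely supported tuples are $\rk$-dense in $\gotR_n=\prod_{W\in\V_n}M_{|W|}(K)$ (the tail $\sum_{W\notin F}\alpha_W$ is arbitrarily small); hence $\pi_n(\calA_n)$ is dense in $\gotR_n$, so $\pi_{\infty}(\calA_{\infty})=\bigcup_n\pi_n(\calA_n)$ is dense in $\gotR_{\infty}$ and therefore in its completion $\gotR_{\rk}$. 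Since $\gotR_{\rk}$ is complete and $\pi_{\infty}$, $\Phi$ are isometries onto their images, the completion of $\calA_{\infty}$ equals the closure of $\pi_{\infty}(\calA_{\infty})$, namely $\gotR_{\rk}$; and as $\Phi(\calA)\supseteq\pi_{\infty}(\calA_{\infty})$ is already dense, we also get $\ol{\calA}^{\rk_{\calA}}=\gotR_{\rk}$, completing the proof.
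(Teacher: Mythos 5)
Your proposal is correct, and most of it (the Cauchy sequence $\chi_{X\setminus E_n}t$, the limit unitary $u$, the universal property, and the density argument via the socles $\bigoplus_{W\in\V_n}M_{|W|}(K)$) coincides with the paper's proof. The one place where you take a genuinely different route is injectivity of $\Phi$. The paper reduces to an element $\sum_{i=0}^n f_iu^i$ with $f_0\neq 0$ supported on $C$, chooses $s$ with $n\mu(E_s)<\mu(C)$, and multiplies on the left by the projection $\chi_{X\setminus(E_s\cup\cdots\cup T^{n-1}(E_s))}$; this simultaneously keeps the degree-zero coefficient nonzero (by the measure estimate) and converts each $u^i$ into $(\chi_{X\setminus E_s}t)^i$, so the product lands in $\calA_s\subseteq\calA_{\infty}$, where injectivity of $\pi_{\infty}$ is already known from Proposition \ref{proposition-embedding.B.R}. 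You instead compress by $\chi_V$ for a Rokhlin tower base $V\subseteq U$ of height exceeding $\max(m,n)$, obtained by applying Lemma \ref{lemma-Rokhlin} to a clopen $E'\subseteq U$ with $0<\mu(E')<1/N$ and using $\sum_k k\,\mu(Y_k^0)=1$ to force a nonempty level $Y_k^0$ with $k>N$; the compression kills all off-diagonal terms and isolates $\lambda\chi_V$, whose image has rank $\mu(V)>0$. Both arguments are sound and use the same ingredients (atomlessness, fullness, invariance, and Lemma \ref{lemma-compat.measure}); yours is the algebraic analogue of the standard ``faithful conditional expectation'' argument for crossed products and does not need to re-enter the approximating algebras $\calA_s$, while the paper's is shorter because it delegates the final nonvanishing to the already-established injectivity of $\calA_{\infty}\hookrightarrow\gotR_{\infty}$. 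The remaining steps (pullback of $\rk_{\gotR_{\rk}}$, faithfulness from injectivity, and the identification of the three completions) match the paper.
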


\begin{proof}
The function $\rk_{\gotR_{\infty}}$ is a rank function on $\gotR_{\infty}$, and in fact a faithful Sylvester matrix rank function since $\gotR_{\infty}$ is regular, so there is an embedding 
of $\gotR_{\infty}$ into its completion $\gotR_{\rk}$, which is a regular self-injective rank-complete ring (\cite[Theorem 19.7]{Goo91}). 
This shows that $\calA_{\infty} \hookrightarrow \gotR_{\infty} \subseteq \gotR_{\rk}$, and we will simply identify $\calA_{\infty} \subseteq \gotR_{\infty}$. Now we show that there is a natural embedding of $\calA$ into $\gotR_{\rk}$.

Observe that $\{ \chi_{X \backslash E_n}t\}_{n \in \N}$ is a Cauchy sequence in $\gotR_{\rk}$, because for $n \geq m$ and using Lemma \ref{lemma-compat.measure},
$$\rk_{\gotR_{\rk}}(\chi_{X \backslash E_n}t - \chi_{X \backslash E_m}t) \leq \rk_{\gotR_{\rk}}(\chi_{E_m \backslash E_n}) = \mu(E_m \backslash E_n) \leq \mu(E_m) \xrightarrow[m \ra \infty]{} \mu(\{y\}) = 0.$$
Therefore we may consider the element $u := \lim_n \chi _{X \backslash E_n}t \in \gotR_{\rk}$. It is an invertible element inside $\gotR_{\rk}$ with inverse $u^* = \lim_n t^{-1} \chi _{X \backslash E_n}$, 
since $\lim_n \mu(E_n) = \lim_n \mu(T^{-1}(E_n)) = 0$.
%$$\rk_{\gotR_{\rk}}(1 - (\chi_{X \backslash E_n}t)(t^{-1} \chi_{X \backslash E_n})) = \rk_{\gotR_{\rk}}(\chi_{E_n}) = \mu(E_n) \xrightarrow[n \ra \infty]{} 0,$$
%$$\rk_{\gotR_{\rk}}(1 - (t^{-1}\chi_{X \backslash E_n})(\chi_{X \backslash E_n}t)) = \rk_{\gotR_{\rk}}(\chi_{T^{-1}(E_n)}) = \mu(T^{-1}(E_n)) = \mu(E_n) \xrightarrow[n \ra \infty]{} 0.$$
Moreover, the condition $u \chi_C u^{-1} = \chi_{T(C)} = T(\chi_C)$ is easily checked to be true for every clopen subset $C$ of $X$, and so we get that $u f u^{-1} = T(f)$ for every $f \in C_K(X)$. %Indeed, in rank we have
%$$\rk_{\gotR_{\rk}}( \chi_{T(C)} - (\chi_{X \backslash E_n}t) \chi_C (t^{-1}\chi_{X \backslash E_n})) = \rk_{\gotR_{\rk}}(\chi_{T(C)} - \chi_{T(C)} \chi_{X \backslash E_n}) \leq \rk_{\gotR_{\rk}}(\chi_{E_n}) = \mu(E_n) \xrightarrow[n \ra \infty]{} 0$$
%and so $u \chi_C u^{-1} = \chi_{T(C)}$, as required.
It follows from the universal property of the crossed product that there is a unique homomorphism
$$\Phi \colon \calA = C_K(X) \rtimes_T \Z \ra \gotR_{\rk}, \quad \sum_{i \in \Z} f_i t^i \mapsto \sum_{i \in \Z} f_i u^i \text{ }\text{ for } f_i \in C_K(X).$$
This map clearly extends the injective homomorphism $\calA_{\infty} \subseteq \gotR_{\infty} \subseteq \gotR_{\rk}$. To show that it is injective, it suffices to check that $\sum_{i=0}^n f_i u^i $ is never $0$ in $\gotR_{\rk}$ whenever $f_0 \neq 0$ %\footnote{We can reduce to this case by multiplying to the right the element $a$ by a suitable power of $u$.}
and all $f_i \in C_K(X)$. But if $f_0 \ne 0$, and $C$ denotes the support of $f_0$, by taking $s$ big enough so that $n \mu(E_s) < \mu(C)$ we have
\begin{align*}
\rk_{\gotR_{\rk}}(\chi_{X \setminus (E_s \cup \cdots \cup T^{n-1}(E_s))} \cdot \chi_C) & = \mu(X\setminus (E_s \cup \cdots \cup T^{n-1}(E_s) \cup C^c) ) \\
%& = 1 - \mu(E_s \cup \cdots \cup T^{n-1}(E_s) \cup C^c) \\
& \geq 1 - (\mu(E_s) + \cdots + \mu(T^{n-1}(E_s)) + \mu(C^c)) = \mu(C) - n \mu(E_s) > 0,
\end{align*}
hence $\chi_{X \setminus (E_s \cup \cdots \cup T^{n-1}(E_s))} f_0 = \chi_{X \setminus (E_s \cup \cdots \cup T^{n-1}(E_s))} \cdot \chi_C f_0 \ne 0$, and moreover
$$\chi_{X \setminus (E_s\cup \cdots \cup T^{n-1}(E_s))}\Big(\sum _{i=0}^n f_i u^i\Big) = \sum_{i=0}^n (\chi_{X \setminus (E_s \cup \cdots \cup T^{n-1}(E_s))} f_i) (\chi_{X\setminus E_s}t)^i \in \calA _s \subseteq \calA_{\infty},$$
and this is nonzero because the map $\calA_{\infty} \subseteq \gotR_{\infty} \subseteq \gotR_{\rk}$ is injective.

We thus get the inclusions $\calA_{\infty} \subseteq \calA \subseteq \gotR_{\rk}$, where we identify $u$ with $t$. Clearly $\rk_{\gotR_{\rk}}$ induce faithful Sylvester matrix rank functions, given by restriction, on either $\calA_{\infty}$ and $\calA$.

For the last part, note that for each $n \geq 1$, $\calA_n$ is dense in $\gotR_n$ with respect to the $\rk_{\gotR_n}$-metric, because 
by Proposition \ref{proposition-embedding.B.R} we have $\text{soc}(\calA_n) = \bigoplus _{W \in \V_n} M_{|W|}(K)$, which is dense in $\gotR_n = \prod_{W \in \V_n} M_{|W|}(K)$. To see this, note that for an element $x \in \gotR_n$, we can consider the sequence of elements $\{x_k\}_{k \geq 1}$ defined by $x_k = \Big(\sum_{\substack{W \in \V_n \\ |W| \leq k}} h_W \Big)x \in \text{soc}(\calA_n)$. A simple computation, using Lemmas \ref{lemma-compat.measure} and \ref{lemma-Rokhlin}, gives
\begin{align*}
\rk_{\gotR_n}(x - x_k) & \leq \rk_{\gotR_n}\Big(1 - \sum_{\substack{W \in \V_n \\ |W| \leq k}} h_W \Big) = \mu \Big( \bigsqcup_{\substack{W \in \V_n \\ |W| > k }} \bigsqcup_{l=0}^{|W|-1} T^l(W) \Big) = \sum_{\substack{W \in \V_n \\ |W| > k}} |W| \mu(W) \xrightarrow[k \ra \infty]{} 0,
\end{align*}
so $x_k \stackrel{k}{\to} x$ in rank. It follows that $\calA_{\infty}$ is dense in $\gotR_{\infty}$, and hence in $\gotR_{\rk}$ with respect to the $\rk_{\gotR_{\rk}}$-metric, so we also get that $\ol{\calA_{\infty}}^{\rk_{\calA_{\infty}}} = \ol{\calA}^{\rk_{\calA}} = \gotR_{\rk}$. 
\end{proof}

It follows that the rank function $\rk_{\gotR_{\rk}}$ on $\gotR_{\rk}$ restricts to a faithful Sylvester matrix rank function on $\calA$ such that $\rk_{\calA}(\chi_U) = \mu(U)$ for each clopen subset $U$ of $X$ (Lemma \ref{lemma-compat.measure}). We now investigate the uniqueness of this rank function, first over $\gotR_{\infty}$ and then over $\calA$ itself.

Given a compact convex set $\Delta$, we denote by $\partial_e \Delta$ the set of extreme points of $\Delta$.

\begin{proposition}\label{proposition-unique.rank}
Following the above notation,
\begin{enumerate}[(i)]
\item the rank function $\rk_{\gotR_{\infty}}$ is a faithful Sylvester matrix rank function on $\gotR_{\infty}$, and it is uniquely determined by the following property: for every clopen subset $U$ of $X$, $\rk_{\gotR_{\infty}}(\pi_{\infty}(\chi_U)) = \mu(U)$.
\item the rank function $\rk_{\calA}$ from Theorem \ref{theorem-completion.A.Ay} is a faithful Sylvester matrix rank function on $\calA$, and it is uniquely determined by the same property as in $(i)$, that is, for every clopen subset $U$ of $X$, $\rk_{\calA}(\chi_U) = \mu(U)$.
\end{enumerate}
Moreover, $\rk_{\gotR_{\infty}} \in \partial_e\Ps(\gotR_{\infty})$ and $\rk_{\calA} \in \partial_e\Ps(\calA)$.
\end{proposition}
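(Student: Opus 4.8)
The faithfulness of $\rk_{\gotR_{\infty}}$ and of $\rk_{\calA}$ is already in hand (from the discussion preceding the statement and from Theorem \ref{theorem-completion.A.Ay}), so the plan is to prove the two uniqueness assertions and then extremality. I begin with $(i)$. Since $\gotR_{\infty}$ is regular, its Sylvester matrix rank functions coincide with its pseudo-rank functions, so let $\rho$ be any pseudo-rank function on $\gotR_{\infty}$ with $\rho(\chi_U)=\mu(U)$ for every clopen $U$. Restricting to a fixed $\gotR_n=\prod_{W\in\V_n}M_{|W|}(K)$, I would first evaluate $\rho$ on the central idempotents $h_W$: since $h_W=\sum_{l=0}^{|W|-1}\chi_{T^l(W)}$ is an orthogonal sum of clopen characteristic functions and $\mu$ is $T$-invariant, $\rho(h_W)=\sum_l\mu(T^l(W))=|W|\mu(W)=\alpha_W$. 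As each corner $h_W\gotR_n h_W\cong M_{|W|}(K)$ is simple Artinian, the restriction of $\rho$ there is forced to equal $\alpha_W\rk_{|W|}$. Then for $x=(x_W)_W\in\gotR_n$ and any finite $F\subseteq\V_n$, the lower bound $\rho(x)\ge\rho((\sum_{W\in F}h_W)x)=\sum_{W\in F}\alpha_W\rk_{|W|}(x_W)$ together with the subadditive upper bound $\rho(x)\le\sum_{W\in F}\alpha_W\rk_{|W|}(x_W)+\rho(1-\sum_{W\in F}h_W)$ and $\rho(1-\sum_{W\in F}h_W)=\sum_{W\notin F}\alpha_W\to0$ (using $\sum_W\alpha_W=1$) pin down $\rho(x)=\rk_{\gotR_n}(x)$. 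Hence $\rho=\rk_{\gotR_{\infty}}$ on each $\gotR_n$, so on $\gotR_{\infty}$.

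For $(ii)$ let $\rho$ be a Sylvester matrix rank function on $\calA$ with $\rho(\chi_U)=\mu(U)$ for all clopen $U$; here $\calA$ is not regular, so I must work directly with matrices. I would first show $\rho=\rk_{\calA}$ on $\calA_{\infty}$. Fix $n$ and $M\in M_m(\calA_n)$: the central idempotents $h_W I_m$ lie in the socle $M_m(\soc(\calA_n))=\bigoplus_W M_{m|W|}(K)$, each corner simple Artinian, and every minimal idempotent there is equivalent (hence of equal $\rho$-value) to a translate $\chi_{T^l(W)}$, of value $\mu(W)$; thus $\rho$ restricts to $\mu(W)\,\Rk$ on the $W$-block. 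The finite-truncation sandwich of $(i)$, now with $\rho((1-\sum_{W\in F}h_W)I_m)=m\sum_{W\notin F}\alpha_W\to0$, gives $\rho=\rk_{\calA_n}$ on $M_m(\calA_n)$, hence $\rho=\rk_{\calA}$ on all matrices over $\calA_{\infty}$. To pass to $\calA$, I would use the approximation underlying Theorem \ref{theorem-completion.A.Ay}: for $M\in M_m(\calA)$ with entries of bounded degree there is a projection $P=p_sI_m$, $p_s$ a clopen characteristic function, with $PM\in M_m(\calA_s)$ and $\rk_{\calA}(1-p_s)\to0$. Then $\rho(PM)=\rk_{\calA}(PM)$, while $\rho(I_m-P)=m(1-\rho(p_s))=m(1-\mu(\cdot))=\rk_{\calA}(I_m-P)\to0$ because $p_s$ is clopen; sandwiching $\rho(M)$ between $\rho(PM)$ and $\rho(PM)+\rho(I_m-P)$ and letting $s\to\infty$ yields $\rho(M)=\rk_{\calA}(M)$.

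For extremality I would exploit ergodicity of $\mu$. Suppose $\rk_{\gotR_{\infty}}=\lambda\rho_1+(1-\lambda)\rho_2$ with $0<\lambda<1$ and $\rho_i\in\Ps(\gotR_{\infty})$; note $\rho_i\le\lambda^{-1}\rk_{\gotR_{\infty}}$. To each $\rho_i$ I associate the finitely additive probability $\nu_i(U)=\rho_i(\chi_U)$ on clopen sets; additivity on disjoint clopens and $\rho_i(1)=1$ are immediate, and $T$-invariance follows since $\chi_{U\setminus T^{-1}(E_n)}$ and $\chi_{T(U)\setminus E_n}$ are equivalent idempotents (implemented by the partial isometry $\chi_{X\setminus E_n}t$), while the boundary errors are bounded by $\lambda^{-1}\mu(E_n)\to0$. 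On the totally disconnected compact space $X$ each $\nu_i$ extends to a $T$-invariant Borel probability measure, and $\mu=\lambda\nu_1+(1-\lambda)\nu_2$; ergodicity means $\mu$ is extreme among $T$-invariant probability measures, forcing $\nu_1=\nu_2=\mu$, i.e. $\rho_i(\chi_U)=\mu(U)$ for all clopen $U$. By $(i)$ then $\rho_1=\rho_2=\rk_{\gotR_{\infty}}$, so $\rk_{\gotR_{\infty}}$ is extreme. The argument for $\rk_{\calA}\in\partial_e\Ps(\calA)$ is identical and in fact cleaner: as $t$ is a genuine unit of $\calA$ with $t\chi_Ut^{-1}=\chi_{T(U)}$, axiom b) gives $\rho_i(\chi_U)=\rho_i(\chi_{T(U)})$ outright, so the associated measures are $T$-invariant with no limiting step, and uniqueness $(ii)$ concludes. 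I expect the main obstacle to be exactly this $T$-invariance in the regular case, where $t$ itself does not live in $\gotR_{\infty}$ and must be replaced by the partial isometries $\chi_{X\setminus E_n}t$, the resulting boundary errors being controlled through $\mu(E_n)\to0$.
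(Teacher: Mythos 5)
Your proof is correct and follows the paper's strategy almost step for step: the uniqueness in $(i)$ and $(ii)$ via the computation $\rho(h_W)=|W|\mu(W)=\alpha_W$, the forced form $\alpha_W\rk_{|W|}$ on each simple block, the finite-truncation sandwich using $\sum_{|W|>k}|W|\mu(W)\to 0$, and the passage from $\calA_\infty$ to $\calA$ by simultaneous approximation in both rank metrics (the paper does this on generators, you do it on matrices via the cut-down $p_sI_m$; these are equivalent). The extremality of $\rk_\calA$ via the induced $T$-invariant Borel measures and ergodicity of $\mu$ is also exactly the paper's argument. The one place where you genuinely diverge is the extremality of $\rk_{\gotR_\infty}$: the paper observes that the components $N_i$ of a convex decomposition are continuous with respect to $\rk_{\gotR_\infty}$, extends them to the completion $\gotR_{\rk}$ by \cite[Proposition 19.12]{Goo91}, and then reuses the $\calA$-argument through the embedding $\calA\subseteq\gotR_{\rk}$; you instead stay inside $\gotR_\infty$ and recover $T$-invariance of the induced measures $\nu_i$ directly, from the idempotent equivalences $\chi_{U\setminus T^{-1}(E_n)}\sim\chi_{T(U)\setminus E_n}$ implemented by $\chi_{X\setminus E_n}t$ together with the bound $\rho_i\le\lambda^{-1}\rk_{\gotR_\infty}$ to control the boundary terms by $\mu(E_n)\to 0$. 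Your route is more self-contained (it avoids invoking the completion and the embedding of $\calA$ into it, which in the paper's logical order depend on Theorem \ref{theorem-completion.A.Ay}), at the cost of the small $\varepsilon$-argument for approximate invariance; both are valid.
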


\begin{proof}
We first prove $(i)$. As we have already mentioned in the proof of Theorem \ref{theorem-completion.A.Ay}, $\rk_{\gotR_{\infty}}$ is a faithful Sylvester matrix rank function on $\gotR_{\infty}$ because of regularity of the ring. Hence to prove uniqueness of the Sylvester matrix rank function it suffices to check that if $N$ is another Sylvester matrix rank function on $\gotR_{\infty}$ satisfying the required compatibility of the measure, then the restriction of $N$ to $\gotR_{\infty}$ is $\rk_{\gotR_{\infty}}$.

We consider the restriction $N_n$ of $N$ to $\gotR_n$ for every $n \geq 1$, which is a pseudo-rank function on $\gotR_n$ such that $N = \lim_n N_n$. 
Consider any finite subset $S \subseteq \{W \in \V_n\}$. Since $\gotR_n = \prod_{W \in \V_n} M_{|W|}(K)$, we have $\Big( \sum_{W \in S} h_W \Big) \gotR_n = \bigoplus_{W \in S} M_{|W|}(K)$. 
Take the restriction $N_n|_S$ of $N_n$ to $\bigoplus_{W \in S} M_{|W|}(K)$, which turns out to be an unnormalized pseudo-rank function 
on $\bigoplus_{W \in S} M_{|W|}(K)$. Hence it can be written as a combination of the unique normalized rank functions on each simple factor $M_{|W|}(K)$, i.e.
$$N_n|_S = \sum_{W \in S} \beta_W \rk_{|W|} \quad \text{ for some }\beta_W \geq 0 \text{ satisfying } \sum_{W \in S} \beta_W = \sum_{W \in S} N_n(h_W).$$
Now for a fixed $W' \in S$ we compute, by using the compatibility of $N$ with $\mu$,
$$\beta_{W'} = N_n|_S(h_{W'}) = N_n(h_{W'}) = |W'| N_n(\chi_{W'}) = |W'| \mu(W') = \alpha_{W'} .$$
Therefore for an arbitrary element $x \in \gotR_n$, we compute
$$N_n\Big( \Big(\sum_{W \in S}h_W \Big) x \Big) = N_n|_S\Big( \Big(\sum_{W \in S}h_W \Big) x \Big) = \sum_{W \in S} \alpha_W \rk_{|W|}(x_W) = \rk_{\gotR_n}\Big( \Big(\sum_{W \in S}h_W \Big) x \Big).$$
This says that $N_n$ and $\rk_{\gotR_n}$ coincide on $\bigoplus_{W \in S} M_{|W|}(K)$.

Now fix $k \geq 1$, and consider the finite set $S_k = \{W \in \V_n \mid |W| \leq k\}$. For $x \in \gotR_n$, we have the estimate
\begin{align*}
|N_n(x) - \rk_{\gotR_n}(x)| & \leq \Big| N_n(x) - N_n\Big( \Big( \sum_{W \in S_k} h_W \Big) x\Big) \Big| + \Big| \rk_{\gotR_n}\Big( \Big(\sum_{W \in S_k} h_W \Big) x\Big) - \rk_{\gotR_n}(x) \Big| \\
& \leq N_n\Big( 1 - \sum_{W \in S_k} h_W \Big) + \rk_{\gotR_n}\Big( 1 - \sum_{W \in S_k} h_W \Big) \\
%& = N_n \Big( \chi_X - \sum_{W \in S_k} \sum_{l=0}^{|W|-1} \chi_{T^l(W)} \Big) + \rk_{\gotR_n} \Big( \chi_X - \sum_{W \in S_k} \sum_{l=0}^{|W|-1} \chi_{T^l(W)} \Big) \\
& = 2 \mu \Big(  \bigsqcup_{\substack{W \in \V_n \\ |W| > k }} \bigsqcup_{l=0}^{|W|-1} T^l(W) \Big) = 2 \sum_{\substack{W \in \V_n \\ |W| > k}} |W| \mu(W) \xrightarrow[k \ra \infty]{} 0 . 
\end{align*}
%where we have used part $(i)$ of Proposition \ref{Ch1-proposition-sylvester.rank.properties} for the second inequality, and the limit tends to zero by Lemma \ref{lemma-Rokhlin}.
Therefore $N_n = \rk_{\gotR_n}$ for all $n \geq 1$, and so $N = \lim_n N_n = \lim_n \rk_{\gotR_n} = \rk_{\gotR_{\infty}}$.

$(ii)$ Let $N$ be a Sylvester matrix rank function on $\calA$ such that $N(\chi_U) = \mu(U)$ for every clopen subset $U$ of $X$. We first check that the restriction $N_n$ of $N$ on $\calA_n$ equals the restriction $\rk_{\calA_n}$ of $\rk_{\calA}$ on $\calA_n$.
Since for any finite subset $S \subseteq \{W \in \V_n\}$ we have the identification $\Big( \sum_{W \in S} h_W \Big) \calA_n = \bigoplus_{W \in S} h_W \calA_n \cong \bigoplus_{W \in S} M_{|W|}(K)$, it follows from the same arguments as in $(i)$ that $N_n(a) = \rk_{\calA_n}(a)$ for every $a \in \calA_n$, and so the restriction $N_{\infty}$ of $N$ on $\calA_{\infty}$ coincides with $\rk_{\calA_{\infty}}$.

Now, to show that $N = \rk_{\calA}$ on $\calA$, it suffices to check that for each algebra generator $a$ of $\calA$ and for each $\varepsilon >0$ there is $b \in \calA_{\infty}$ such that $N(a-b) < \frac{\varepsilon}{2}$ and $\rk_{\calA}(a-b) < \frac{\varepsilon}{2}$. % since in this case we would have the estimate
%$$|N(a) - \rk_{\calA}(a)| \leq |N(a) - N(b)| + |\rk_{\calA}(a) - \rk_{\calA}(b)| \leq N(a-b) + \rk_{\calA}(a-b) < \varepsilon$$
%for all $\varepsilon > 0$, and consequently $N = \rk_{\calA}$ on $\calA$.
This is clear for $a \in C_K(X)$ since $C_K(X) \subseteq \calA_{\infty}$, and it is also clear for $t$, because $\chi_{X\setminus E_n}t \in \calA_n$ and
$$N(t - \chi_{X \backslash E_n}t) \leq N(\chi_{E_n}) = \mu(E_n) \xrightarrow[n \ra \infty]{} 0  , \quad \rk_{\calA}(t - \chi_{X \backslash E_n}t) \leq \rk_{\calA}(\chi_{E_n}) = \mu(E_n) \xrightarrow[n \ra \infty]{} 0.$$
Similarly, we can show that $N$ and $\rk_{\calA}$ coincide on matrices over $\calA$.%, consider a concrete matrix algebra $M_l(\calA)$.  The same argument as above works exactly the same in this case, we sketch it for convenience. For a finite subset $S \subseteq \{ W \in \V_n\}$, consider the element
%$$H_S = \begin{pmatrix}
%				 \sum_{W \in S} h_W & & \\
%				 & \ddots \stackinset{c}{-.1in}{c}{.15in}{\footnotesize$l$}{} & \\
%				 & & \sum_{W \in S} h_W \\
%				\end{pmatrix} \in M_l(\calA)$$
%We have the identification $H_S \cdot M_l(\calA_n) \cong \bigoplus_{W \in S} M_l(h_W \calA_n) \cong \bigoplus_{W \in S} M_l(K) \otimes M_{|W|}(K)$, so the restriction of $N$ on $M_l(\calA_n)$ coincides with the restriction of $\rk_{\calA}$ on $M_l(\calA_n)$, and hence they coincide on $M_l(\calA_{\infty}) = \varinjlim_n M_l(\calA_n)$. As before, to show that $N = \rk_{\calA}$ on $M_l(\calA)$, we only need to check that for each algebra generator $A$ of $M_l(\calA)$ and for each $\varepsilon > 0$ there is $B \in M_l(\calA_{\infty})$ such that $N(A - B) < \frac{\varepsilon}{2}$ and $\rk_{\calA}(A-B) < \frac{\varepsilon}{2}$. This is clear for $a \in M_l(C_K(X))$ since $C_K(X) \subseteq \calA_{\infty}$, and it is also clear for $t \cdot \text{Id}_l$, because $\chi_{X \backslash E_n}t \cdot \text{Id}_l \in M_l(\calA_n)$ and
%$$N(t \cdot \text{Id}_l - \chi_{X \backslash E_n}t \cdot \text{Id}_l) \leq l \cdot N(\chi_{E_n}) = l \cdot \mu(E_n) \xrightarrow[n \ra \infty]{} 0,$$
%$$\rk_{\calA}(t \cdot \text{Id}_l - \chi_{X \backslash E_n}t \cdot \text{Id}_l) \leq l \cdot \rk_{\calA}(\chi_{E_n}) = l \cdot \mu(E_n) \xrightarrow[n \ra \infty]{} 0.$$

Let us show that $\rk_{\calA}$ is extremal. Suppose we have a convex combination $\rk_{\calA} = \alpha N_1 + \beta N_2$, where $N_1$ and $N_2$ are Sylvester matrix rank functions on $\calA$. Assume that $\alpha \neq 0,1$. We first show that each Sylvester matrix rank function $N_i$ induces a $T$-invariant probability measure $\mu_i$ on $X$. For this, we will use an argument similar to the one given in \cite[Lemma 5.1]{RS}. We define premeasures $\ol{\mu}_i$ over the algebra of clopen sets $\K$ of $X$, by the rule
$$\ol{\mu}_i : \K \ra [0,1], \quad \ol{\mu}_i(U) = N_i(\chi_U).$$
%Indeed, $\ol{\mu}_i(\emptyset) = N_i(0) = 0$, and if $\{U_n\}_{n \geq 1}$ is a sequence of disjoint clopen sets of $X$ such that its union $U$ is also clopen, then $U$ is compact, and therefore it can be written as $U = U_{n_1} \sqcup \cdots \sqcup U_{n_m}$ for some clopen $U_{n_i}$, and
%$$\ol{\mu}_i(U) = N_i(\chi_U) = \sum_{i=1}^m N_i(\chi_{U_{n_i}}) = \sum_{n \geq 1} \ol{\mu}_i(U_n).$$
%So each $\ol{\mu}_i$ is a premeasure, and
By \cite[Theorem 1.14]{Fol84} they can be uniquely extended to measures $\mu_i$ on the Borel $\sigma$-algebra of $X$, and it is straightforward to show that each $\mu_i$ is a $T$-invariant probability measure on $X$.

Now, we necessarily have the equality $\mu = \alpha \mu_1 + \beta \mu_2$, % since
%$$\mu(U) = \rk_{\gotR_{\rk}}(\chi_U) = \alpha N_1(\chi_U) + \beta N_2(\chi_U) = \alpha \mu_1(U) + \beta \mu_2(U) \quad \text{ for every } U \in \K.$$
and since $\mu$ is extremal (\cite[Theorem 8.1.8]{Phillips}) and $\alpha \neq 0,1$, we obtain that $\mu_1 = \mu_2 = \mu$. This says that $N_i$ are Sylvester matrix rank functions on $\calA$ satisfying $N_i(\chi_U) = \mu_i(U) = \mu(U)$ for each $U \in \K$. By the uniqueness property of part $(ii)$, we get that $N_i = \rk_{\calA}$. It follows that $\rk_{\calA}$ is extremal.

To show that $\rk_{\gotR_{\infty}}$ is extremal, suppose again that we have a convex combination $\rk_{\gotR_{\infty}} = \alpha N_1 + \beta N_2$, where $N_1$ and $N_2$ are pseudo-rank functions on $\gotR_{\infty}$. Assume that $\alpha \neq 0,1$. Then it is clear that each $N_i$ is continuous with respect to $\rk_{\gotR_{\infty}}$, denoted by $N_i << \rk_{\gotR_{\infty}}$, in the sense of \cite[Definition on page 287]{Goo91}, and therefore by \cite[Proposition 19.12]{Goo91}, $N_i$ extend to continuous pseudo-rank functions $\ol{N}_i$ on $\gotR_{\rk}$ such that $\rk_{\gotR_{\rk}} = \alpha \ol{N}_1 + \beta \ol{N}_2$. Since we have an identification $\calA \subseteq \gotR_{\rk}$ given by Theorem \ref{theorem-completion.A.Ay}, the argument above can be used to show that $\rk_{\gotR_{\infty}} \in \partial_e \Ps(\gotR_{\infty})$.
\end{proof}

We can now exactly compute the rank completion $\gotR_{\rk}$ of $\gotR_{\infty}$ (and of $\calA$): it is the well-known von Neumann continuous factor $\calM_K$, which is defined as the completion of $\varinjlim_n M_{2^n}(K)$ with respect to its unique rank function (see \cite{AC2} for details). Moreover, when the involution $*$ on $K$ is positive definite, we can deduce from \cite[Theorem 4.5]{AC2} that there is a $*$-isomorphism between $\gotR_{\rk}$ and $\calM_K$, where the latter has the involution induced from the $*$-transpose involution on each matrix algebra $M_{2^n}(K)$. The above of course applies when $K$ is a subfield of $\C$ which is invariant under complex conjugation. This generalizes a result of Elek \cite{Elek16}.

\begin{theorem}\label{theorem-vN.cont.factor}
There is an isomorphism of algebras $\gotR_{\rk} \cong \calM_K$, the von Neumann continuous factor over $K$. Moreover, if $(K,*)$ is a field with positive definite involution, then $\gotR_{\rk}$ is a $*$-regular ring in a natural way, and $\gotR_{\rk} \cong \calM_K$ as $*$-algebras over $K$.   
\end{theorem}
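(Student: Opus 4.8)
The plan is to identify $\gotR_{\rk}$ as an abstract \emph{continuous factor} over $K$ and then invoke the uniqueness result \cite[Theorem 4.5]{AC2}, which characterizes $\calM_K$ among such rings. First I would collect the structural properties of $\gotR_{\rk}$. By Theorem \ref{theorem-completion.A.Ay} together with \cite[Theorems 19.6, 19.7]{Goo91}, $\gotR_{\rk}$ is a right and left self-injective, rank-complete regular $K$-algebra carrying the faithful rank function $\rk_{\gotR_{\rk}}$. Since $\rk_{\gotR_{\infty}}\in\partial_e\Ps(\gotR_{\infty})$ is extremal by Proposition \ref{proposition-unique.rank}, the completion $\gotR_{\rk}$ is a \emph{simple} ring by \cite[Theorem 19.14]{Goo91}. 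Finally, faithfulness of the rank forces direct finiteness: if $ab=1$ then $e=ba$ is an idempotent with $\rk(1-e)=\rk(1)-\rk(e)=0$, whence $e=1$. Thus $\gotR_{\rk}$ is a directly finite factor.

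Next I would pin down the \emph{type}. Because $\mu$ is full and atomless and $\rk_{\gotR_{\rk}}(\chi_U)=\mu(U)$ for every clopen $U$ (Lemma \ref{lemma-compat.measure} and Theorem \ref{theorem-completion.A.Ay}), the rank takes a dense set of values in $[0,1]$; choosing clopen sets whose measures converge to a prescribed $r\in[0,1]$ produces a Cauchy sequence of idempotents $\chi_U$, whose limit in the rank-complete ring $\gotR_{\rk}$ is again an idempotent of rank $r$. Hence the range of $\rk_{\gotR_{\rk}}$ is all of $[0,1]$, so $\gotR_{\rk}$ has no minimal idempotent and is of continuous type $\mathrm{II}_f$. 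Combined with simplicity, $\gotR_{\rk}$ is a continuous factor. Moreover the matricial algebras $\soc(\calA_n)=\bigoplus_{W\in\V_n}M_{|W|}(K)$ (Proposition \ref{proposition-embedding.B.R}) are dense in $\gotR_n$ in the rank metric (proof of Theorem \ref{theorem-completion.A.Ay}), so their union is dense in $\gotR_{\infty}$, and hence in $\gotR_{\rk}$. This exhibits $\gotR_{\rk}$ as the rank completion of a locally matricial $K$-algebra, which is exactly the input required by \cite[Theorem 4.5]{AC2}; applying that theorem yields a $K$-algebra isomorphism $\gotR_{\rk}\cong\calM_K$.

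For the second statement, assume $(K,*)$ is positive definite. Then each $M_{|W|}(K)$ endowed with the $*$-transpose involution is $*$-regular with positive definite involution, hence so is every product $\gotR_n$, and since the connecting maps $j_n$ are $*$-embeddings (Proposition \ref{proposition-embed.Rn}), so is $\gotR_{\infty}$. The rank is $*$-invariant, $\rk_{\gotR_{\infty}}(x^*)=\rk_{\gotR_{\infty}}(x)$, so the involution is an isometry for the rank metric and extends continuously to $\gotR_{\rk}$. Using $\rk(x^*x)=\rk(x)$ (the elements $x$ and $x^*x$ have the same right annihilator, hence the same right projection) together with faithfulness, one checks $x^*x=0\Rightarrow\rk(x)=0\Rightarrow x=0$, so the extended involution is proper and $\gotR_{\rk}$ is $*$-regular in a natural way. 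The $*$-version of \cite[Theorem 4.5]{AC2} then upgrades the previous isomorphism to a $*$-isomorphism $\gotR_{\rk}\cong\calM_K$ intertwining the involutions induced by the $*$-transpose on the $M_{2^n}(K)$.

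The main obstacle is matching the precise hypotheses of \cite[Theorem 4.5]{AC2}, i.e. genuinely presenting $\gotR_{\rk}$ as a rank completion of a locally matricial $K$-algebra (with positive definite involution in the $*$-case). The subtlety is that the embeddings $j_n$ need not carry $\soc(\gotR_n)$ into $\soc(\gotR_{n+1})$, so the union of the socles $\soc(\calA_n)$ is a dense union of matricial subalgebras but not literally a nested ultramatricial subalgebra; the point is to argue, by rank-approximation, that these matricial pieces are nonetheless rank-dense, which is precisely where atomlessness of $\mu$ and the density estimate in the proof of Theorem \ref{theorem-completion.A.Ay} do the essential work.
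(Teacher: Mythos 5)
Your proposal is correct and follows essentially the same route as the paper: establish that $\gotR_{\rk}$ is a simple, self-injective regular ring of type $II_f$ (simplicity coming from extremality of $\rk_{\gotR_{\infty}}$ via \cite[Theorem 19.14]{Goo91}) and then invoke the uniqueness theorems of \cite{AC2}, with the only real work being the local matricial approximation condition. The one point you flag as ``the main obstacle'' but do not execute is exactly where the paper does its work: rather than arguing directly with the non-unital, non-nested socles $\soc(\calA_n)$, it introduces the \emph{unital} matricial $*$-subalgebras $\gotR_n' = H_n\gotR_n \oplus (1-H_n)K$ with $H_n=\sum_{W\in S_{k_n}}h_W$ chosen so that $\rk_{\gotR_{\rk}}(1-H_n)<2^{-n}$, together with block-diagonal maps $j_{n,m}'$ satisfying $\rk_{\gotR_m}(j_{n,m}(z)-j_{n,m}'(z))<2^{-m}$, which is what lets the proof of $(ii)\Rightarrow(iii)$ in \cite[Theorem 4.5]{AC2} be adapted; also, for the plain algebra isomorphism the paper appeals to \cite[Theorem 2.2]{AC2}, whose hypothesis is inherited directly from the dense subalgebra $\gotR_{\infty}$, reserving \cite[Theorem 4.5]{AC2} for the $*$-algebra statement.
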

 
\begin{proof}
Since $\rk_{\gotR_{\infty}}$ is extremal (Proposition \ref{proposition-unique.rank}), it follows from \cite[Theorem 19.14]{Goo91} that $\gotR_{\rk} = \ol{\gotR_{\infty}}^{\rk_{\gotR_{\infty}}}$ is a simple ring. So $\gotR_{\rk}$ is a \textit{continuous factor} in the sense of \cite{AC2}, that is, a simple, (right and left) self-injective regular ring of type $II_f$. Moreover, there is a countably dimensional dense subalgebra of $\gotR_{\rk}$, namely $\calA$, and clearly condition $(iii)$ in \cite[Theorem 2.2]{AC2} is satisfied (because it is satisfied for the dense subalgebra $\gotR_{\infty}$). It follows that $\gotR_{\rk} \cong \calM_K$, the von Neumann continuous factor.

Now assume that $(K,*)$ is a field with positive definite involution. Then each $\gotR_n = \prod_{W \in \V_n} M_{|W|}(K)$ is a $*$-regular ring, where each factor $M_{|W|}(K)$ has the $*$-transpose involution, and the connecting maps $j_n : \gotR_n \ra \gotR_{n+1}$ are given by block-diagonal maps (see Proposition \ref{proposition-embed.Rn}), so in particular are $*$-homomorphisms. Therefore $\gotR_{\infty}$ is a $*$-regular ring, and by \cite[Proposition 1]{Hand77}, the completion $\gotR_{\rk}$ of $\gotR_{\infty}$ is also a $*$-regular ring. One can easily show that $\calA$ sits inside $\gotR_{\rk}$ as a $*$-subalgebra, i.e. that the homomorphism defined in the proof of Proposition \ref{theorem-completion.A.Ay}
%$$\Phi : \calA \hookrightarrow \gotR_{\rk}, \quad \sum_{i \in \Z} f_i t^i \mapsto \sum_{i \in \Z} f_i u^i$$
preserves the involution.

Now the local condition $(iii)$ in \cite[Theorem 4.5]{AC2} is actually somewhat more difficult to check in this case. Given positive integers $n, k$, if we define $S_k = \{ W \in \V_n \mid |W| \leq k\}$, we have the estimate
$$\rk_{\gotR_{\rk}}\Big( 1 - \sum_{W \in S_k} h_W \Big) = \sum_{\substack{W \in \V_n \\ |W| > k}} |W| \mu(W) \xrightarrow[k \ra \infty]{} 0$$
as we have showed in the proof of Proposition \ref{proposition-unique.rank}. Therefore there exists $k_n$ such that $\rk_{\gotR_{\rk}}(1 - H_n) < \frac{1}{2^n}$, being $H_n$ the projection $\sum_{W \in S_{k_n}} h_W \in \gotR_n$. We approximate $\gotR_{\infty}$ by the unital $*$-subalgebras
$$\gotR_n' := H_n \gotR_n \oplus (1 - H_n)K.$$
Since $H_n \gotR_n = \Big( \sum_{W \in S_{k_n}} h_W \Big) \gotR_n \cong \bigoplus_{W \in S_{k_n}} M_{|W|}(K)$, these algebras are $*$-isomorphic to standard matricial $*$-algebras. Although the sequence of projections $(j_{n,\infty}(H_n))$ is not increasing, there are unital $*$-homomorphisms $j'_{n,m} : \gotR'_n \to \gotR'_m $ for $n\le m$, defined by
$$j_{n,m}' (H_nx +(1-H_n)\lambda ) = H_m \cdot j_{n,m} (H_nx +(1-H_n)\lambda ) + (1-H_m) \lambda ,$$ 
for $x\in \gotR _n$ and $\lambda \in K$. Here $j_{n,m} \colon \gotR_n \ra \gotR_m$ is the natural $*$-homomorphism, and $j_{n,\infty} : \gotR_n \ra \gotR_{\infty}$ the canonical map into the direct limit. Moreover, since each $j_{n,m}$ is given by block-diagonal maps, so are the $j_{n,m}'$. Observe that $(\gotR'_n, j_{n,n+1}')$ is not a directed system, but for $z = H_n x + (1-H_n) \lambda \in \gotR'_n$ and all $m \geq n$, we have the estimate
$$\rk_{\gotR_m}( j_{n,m}(z) - j_{n,m}'(z)) \leq \rk_{\gotR_m}(1-H_m) < \frac{1}{2^m}.$$
Consequently, the proof of the implication $(ii) \implies (iii)$ in \cite[Theorem 4.5]{AC2} can be adapted to the present setting, and we obtain that condition $(iii)$ in \cite[Theorem 4.5]{AC2} holds. This theorem then gives that $\gotR_{\rk} = \ol{\gotR_{\infty}}^{\rk_{\gotR_{\infty}}}$ is $*$-isomorphic to $\calM_K$, as desired.   
\end{proof}

\subsection{Relation with measures on \texorpdfstring{$X$}{}}\label{subsection-relation.measures.X}

Theorem \ref{theorem-completion.A.Ay} and Proposition \ref{proposition-unique.rank} state that, given an ergodic, full and $T$-invariant probability measure $\mu$ on $X$, one can construct an extremal faithful Sylvester matrix rank function $\rk_{\calA}$ on $\calA$, unique with respect to the property that $\rk_{\calA}(\chi_U) = \mu(U)$ for every clopen subset $U$ of $X$. In the next proposition we prove that the converse of this construction can also be made.

\begin{proposition}\label{proposition-rank.measure}
Let $\rk$ be an extremal and faithful Sylvester matrix rank function on $\calA$. Then there exists an ergodic, full and $T$-invariant probability measure $\mu_{\rk}$ on $X$, uniquely determined by the property that
$$\mu_{\rk}(U) = \rk(\chi_U) \quad \text{ for every clopen subset $U$ of $X$}.$$
\end{proposition}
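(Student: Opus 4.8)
The plan is to construct $\mu_{\rk}$ from $\rk$ by reversing the correspondence of Proposition \ref{proposition-unique.rank}, and then to extract ergodicity from extremality by exhibiting, whenever $\mu_{\rk}$ fails to be ergodic, a nontrivial central idempotent in the rank completion of $\calA$ that splits $\rk$ as a proper convex combination.

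\textbf{Construction of the measure.} First I would define $\ol{\mu}_{\rk}\colon \K \to [0,1]$ on the Boolean algebra $\K$ of clopen subsets of $X$ by $\ol{\mu}_{\rk}(U) = \rk(\chi_U)$. For disjoint clopen $U,V$ the characteristic functions $\chi_U,\chi_V$ are orthogonal idempotents and $\chi_{U\sqcup V}=\chi_U+\chi_V$, so $\rk(\chi_U+\chi_V)=\rk(\chi_U)+\rk(\chi_V)$; hence $\ol{\mu}_{\rk}$ is finitely additive. Since $X$ is compact, a clopen set that is a countable disjoint union of clopen sets is in fact a finite such union, so $\ol{\mu}_{\rk}$ is automatically a premeasure, and by \cite[Theorem 1.14]{Fol84} it extends uniquely to a Borel measure $\mu_{\rk}$ (the clopen sets generate the Borel $\sigma$-algebra, as $X$ is totally disconnected and second countable). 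It is a probability measure because $\mu_{\rk}(X)=\rk(1)=1$; it is $T$-invariant because for clopen $U$ one has $\chi_{T^{-1}(U)}=t^{-1}\chi_U t$, and conjugation by the unit $t$ preserves $\rk$ (apply property b) twice), so $\mu_{\rk}\circ T^{-1}=\mu_{\rk}$ on $\K$ and therefore everywhere; and it is full because a nonempty open $U$ contains a nonempty clopen $V$, whence faithfulness of $\rk$ gives $\mu_{\rk}(U)\ge \rk(\chi_V)>0$. Uniqueness is immediate, as two Borel probability measures agreeing on the generating algebra $\K$ coincide. This part is essentially a transcription of the premeasure argument already used in the proof of Proposition \ref{proposition-unique.rank}.

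\textbf{Ergodicity.} This is where the main obstacle lies, and I would argue the contrapositive: assuming $\mu_{\rk}$ is not ergodic, I will show $\rk\notin\partial_e\Ps(\calA)$. Choose a $T$-invariant Borel set $A$ with $0<\mu_{\rk}(A)<1$, and work in the rank completion $\calR:=\ol{\calA}^{\rk}$, on which $\rk$ extends to a Sylvester matrix rank function $\ol{\rk}$. The idea is to realize $\chi_A$ as an element $p\in\calR$. Using regularity of $\mu_{\rk}$ on the compact metric space $X$, one finds clopen sets $U_n$ with $\mu_{\rk}(U_n\triangle A)\to 0$; the subadditivity $\rk(x+y)\le \rk(x)+\rk(y)$, valid for every Sylvester matrix rank function, gives $\rk(\chi_{U_n}-\chi_{U_m})\le \mu_{\rk}(U_n\triangle U_m)\to 0$, so $(\chi_{U_n})_n$ is Cauchy and converges to some $p\in\calR$ with $\ol{\rk}(p)=\mu_{\rk}(A)=:\alpha\in(0,1)$. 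Since multiplication is $\rk$-continuous, $p$ is idempotent; it commutes with $C_K(X)$ (each $\chi_{U_n}$ does) and with $t$ (since $T$-invariance of $A$ forces $\mu_{\rk}(T(U_n)\triangle U_n)\to 0$, whence $tpt^{-1}=p$), so $p$ is a central idempotent of $\calR$.

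\textbf{Splitting the rank.} For a central idempotent $p$ the identity
\[
\begin{pmatrix} p \\ 1-p\end{pmatrix} x \begin{pmatrix} p & 1-p\end{pmatrix} = \begin{pmatrix} px & 0 \\ 0 & (1-p)x\end{pmatrix},
\]
read in both directions through submultiplicativity and the block-sum axiom, yields $\ol{\rk}(x)=\ol{\rk}(px)+\ol{\rk}((1-p)x)$, and likewise for matrices over $\calR$ with $p$ acting diagonally. Hence, setting $N_1(M)=\tfrac1\alpha\ol{\rk}(pM)$ and $N_2(M)=\tfrac1{1-\alpha}\ol{\rk}((1-p)M)$, one checks directly that $N_1,N_2$ are Sylvester matrix rank functions on $\calR$ and that $\ol{\rk}=\alpha N_1+(1-\alpha)N_2$; restricting to $\calA$ gives $\rk=\alpha\,N_1|_{\calA}+(1-\alpha)\,N_2|_{\calA}$. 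These restrictions are distinct, since on a clopen $U$ one computes $N_1(\chi_U)=\mu_{\rk}(A\cap U)/\mu_{\rk}(A)$ and $N_2(\chi_U)=\mu_{\rk}((X\setminus A)\cap U)/\mu_{\rk}(X\setminus A)$, so $N_1,N_2$ induce the distinct conditional measures of $\mu_{\rk}$ on $A$ and on $X\setminus A$, which already disagree on some clopen set. This contradicts the extremality of $\rk$, and therefore $\mu_{\rk}$ is ergodic. The two delicate points, where I expect to spend the most care, are the verification that $p$ is a genuine central idempotent of the completion and the additivity $\ol{\rk}(x)=\ol{\rk}(px)+\ol{\rk}((1-p)x)$; once these are in hand, everything else is routine.
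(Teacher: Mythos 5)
Your proof is correct, and its overall strategy is the same as the paper's: build $\mu_{\rk}$ as the Carath\'eodory extension of $U \mapsto \rk(\chi_U)$, then, assuming non-ergodicity, use regularity of $\mu_{\rk}$ to approximate a $T$-invariant Borel set $A$ of intermediate measure by clopen sets $U_n$ and split $\rk$ as a proper convex combination, contradicting extremality. The one genuine difference is in how the splitting is packaged. The paper never leaves $\calA$: it defines $N_1(M) = \alpha^{-1}\lim_i \rk(\chi_{U_i}M)$ and $N_2(M) = (1-\alpha)^{-1}\lim_i\rk(\chi_{X\setminus U_i}M)$ directly, and verifies the Sylvester axioms by hand using the approximate $T$-invariance and approximate idempotence of the sequence $(\chi_{U_i})$; distinctness is checked by an explicit numerical estimate $N_1(\chi_{U_j}) > \tfrac12 > N_2(\chi_{U_j})$ for a well-chosen $j$. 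You instead pass to the rank completion $\ol{\calA}^{\rk}$, observe that $(\chi_{U_n})$ is Cauchy there, and realize its limit as an honest central idempotent $p$; the additivity $\ol{\rk}(x)=\ol{\rk}(px)+\ol{\rk}((1-p)x)$ then makes the verification that $N_1, N_2$ are Sylvester rank functions essentially automatic, at the cost of having to prove centrality of $p$ (which you do correctly, via $\mu_{\rk}(T(U_n)\triangle U_n)\le 2\mu_{\rk}(U_n\triangle A)\to 0$). Your formulation buys a cleaner axiom check and a conceptual identification of $N_1,N_2$ as the conditional measures on $A$ and $X\setminus A$; the paper's buys independence from the completion machinery. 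Both arguments are sound, and each fills in precisely the detail the other leaves implicit.
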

\begin{proof}
It is clear that $\rk$ induces a finitely additive probability measure on the algebra of clopen subsets of $X$ by the rule
$$\ol{\mu}_{\rk}(U) = \rk(\chi_U) \quad \text{ for every clopen subset $U$ of $X$},$$
which, by the same argument as in the proof of Proposition \ref{proposition-unique.rank}, can be uniquely extended to a Borel $T$-invariant probability measure $\mu_{\rk}$ on $X$. %It is clearly $T$-invariant since, for $U \subseteq X$ a clopen set,
%$$\mu_{\rk}(T(U)) = \rk(\chi_{T(U)}) = \rk(t \chi_U t^{-1}) = \rk(\chi_U) = \mu_{\rk}(U).$$
By \cite[Theorem 2.18]{Rud}, $\mu_{\rk}$ is regular. We now show that $\mu_{\rk}$ is an ergodic measure. Suppose that it is not ergodic. Then there is a $T$-invariant Borel subset 
$B$ of $X$ such that $\alpha := \mu_{\rk}(B)\in (0,1 )$. By regularity of the measure, and since the clopen subsets of $X$ form a basis for the topology, there are nonempty clopen subsets $\{U_i\}_{i \geq 1}$ in $X$ such that $\mu_{\rk}(B \triangle U_i) < \frac{1}{2^i}$ for all $i \geq 1$. We compute
\begin{align*}
\lim_{i \to \infty} \mu_{\rk}(U_i) & = \lim_{i \to \infty} \Big( \mu_{\rk}(U_i \backslash B) + \mu_{\rk}(U_i \cap B) \Big) \\
& = \lim_{i \to \infty} \mu_{\rk}(U_i \cap B) = \lim_{i \to \infty} \Big( \mu_{\rk}(U_i \cap B) + \mu_{\rk}(B \backslash U_i) \Big) = \mu_{\rk}(B).
\end{align*}
We then define $N_1(M) = \alpha^{-1} \lim_{i \to \infty}\rk(\chi_{U_i} M)$ and $N_2(M) = (1-\alpha)^{-1} \lim_{i \to \infty}\rk(\chi_{X \backslash U_i} M)$ for every matrix $M$ over $\calA$, and note that
$$N_1(1) = \alpha^{-1} \lim_{i \to \infty} \rk(\chi_{U_i}) = \alpha^{-1} \lim_{i \to \infty} \mu_{\rk}(U_i) = 1,$$
$$N_2(1) = (1-\alpha)^{-1} \lim_{i \to \infty}\rk(\chi_{X \backslash U_i}) = (1-\alpha)^{-1} \lim_{i \to \infty}\mu_{\rk}(X \backslash U_i) = 1,$$
From this and the approximate invariance of the sequence $\{ U_i \}_{i\ge 1}$, it is straightforward to check that each $N_i$ defines a Sylvester matrix rank function on $\calA$. To see that they are distinct Sylvester matrix rank functions, take $j \geq 1$ such that $\mu_{\rk}(B \triangle U_j) < \frac{1}{2} \text{min}\{\alpha, 1-\alpha\}$; then
$$N_1(\chi_{U_j}) = \alpha^{-1} \lim_{i \to \infty} \mu_{\rk}(U_i \cap U_j) = \alpha^{-1} \mu_{\rk}(U_j \cap B) > \frac{1}{2},$$
$$N_2(\chi_{U_j}) = (1-\alpha)^{-1} \lim_{i \to \infty} \mu_{\rk}((X \backslash U_i) \cap U_j) \leq (1-\alpha)^{-1} \mu_{\rk}(B \backslash U_j) < \frac{1}{2}.$$
Since $\rk = \alpha N_1 + (1-\alpha )N_2$, this contradicts the fact that $\rk$ is extremal in $\Ps(\calA)$.

Finally, the fullness of the measure follows from the faithfulness of $\rk$.
\end{proof}

\section{The space \texorpdfstring{$\Ps(\calA)$}{}}\label{section-space.PA}

In this section we obtain some results on the structure of the compact convex set $\Ps(\calA)$ of all the Sylvester matrix rank functions on $\calA$. Throughout this section, $T$ will denote a homeomorphism on a totally disconnected, metrizable compact space $X$, and $\calA = C_K(X) \rtimes_T \Z$.

Let $R$ be a unital ring. Following \cite{Jaik-survey}, we denote by $\Ps_{{\rm reg}}(R)$ the set of all Sylvester matrix rank functions $\rk$ on $R$ that are induced by some regular ring, that is, $\rk \in \Ps_{{\rm reg}}(R)$ if and only if there is a regular rank ring $(S, \rk_S)$ and a ring homomorphism $\varphi \colon R \to S$ such that $\rk(A) = \rk_S(\varphi (A))$ for every matrix $A$ over $R$.

We first investigate the relation between Sylvester matrix rank functions on $C_K(X)$ and Borel probability measures on $X$.

\begin{lemma}\label{lemma-ranks.on.CK(X)}
Let $T$ be a homeomorphism on a totally disconnected, metrizable compact space $X$. There is a natural identification $\Ps(C_K(X)) = M(X)$, where $M(X)$ denotes the compact convex set of probability measures on $X$. Under this identification, the set $M^{\Z}(X)$ of $T$-invariant probability measures corresponds to the set $\Ps^{\Z}(C_K(X))$ of $T$-invariant Sylvester matrix rank functions on $C_K(X)$. 
\end{lemma}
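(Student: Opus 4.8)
The plan is to construct mutually inverse affine continuous maps $\Theta \colon \Ps(C_K(X)) \to M(X)$ and $\Xi \colon M(X) \to \Ps(C_K(X))$ and then check that they intertwine the two $\Z$-actions. First I would send a Sylvester matrix rank function $\rk$ to the set function $\ol{\mu}(U) := \rk(\chi_U)$ on the Boolean algebra $\K$ of clopen subsets of $X$. Here $\ol{\mu}(\emptyset)=\rk(0)=0$ and $\ol{\mu}(X)=\rk(1)=1$; finite additivity $\ol{\mu}(U\sqcup V)=\ol{\mu}(U)+\ol{\mu}(V)$ for disjoint clopen $U,V$ follows by conjugating the matrix $\mathrm{diag}(\chi_U,\chi_V)$ with the block-constant unit equal to the $2\times 2$ coordinate swap over $V$ and to the identity over $X\setminus V$: this turns it into $\mathrm{diag}(\chi_{U\sqcup V},0)$, and invariance of $\rk$ under multiplication by invertible matrices (immediate from axiom b) of Definition \ref{Ch1-definition-sylvester.rank}) together with axiom c) yields $\rk(\chi_U)+\rk(\chi_V)=\rk(\chi_{U\sqcup V})$. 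Since $X$ is compact and totally disconnected, $\ol{\mu}$ is a premeasure on $\K$, so by \cite[Theorem 1.14]{Fol84} it extends uniquely to a Borel probability measure $\mu=\Theta(\rk)\in M(X)$.

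Conversely, given $\mu\in M(X)$ I would define $\Xi(\mu)=\rk_\mu$ by the integral formula
$$\rk_\mu(A)=\int_X \Rk(A(x))\, d\mu(x),$$
where $A$ is an $m\times n$ matrix over $C_K(X)$, $A(x)\in M_{m\times n}(K)$ is its pointwise evaluation, and $\Rk$ is the usual rank over $K$. As the entries of $A$ are locally constant, $x\mapsto \Rk(A(x))$ is a bounded, locally constant (hence measurable) function, so the integral is well defined, and the four axioms of Definition \ref{Ch1-definition-sylvester.rank} are inherited from the corresponding pointwise (in)equalities for $\Rk$ over $K$ upon integrating against $\mu$. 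The identity $\Theta\circ\Xi=\mathrm{id}$ is immediate, since $\rk_\mu(\chi_U)=\int_X\chi_U\,d\mu=\mu(U)$ for clopen $U$ and two Borel measures agreeing on $\K$ coincide.

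The substance of the proof, and the step I expect to be the main obstacle, is the reverse identity $\Xi\circ\Theta=\mathrm{id}$: that an arbitrary $\rk\in\Ps(C_K(X))$ is recovered from $\mu=\Theta(\rk)$ by the integral formula. I would argue via a simultaneous diagonalisation over $C_K(X)$. Given a matrix $A$ over $C_K(X)$, choose a finite clopen partition $X=\bigsqcup_p V_p$ on which every entry of $A$ is constant, so $A=\sum_p \chi_{V_p}A_p$ with $A_p\in M_{m\times n}(K)$. For each $p$ pick $P_p\in GL_m(K)$, $Q_p\in GL_n(K)$ with $P_pA_pQ_p=\mathrm{diag}(I_{r_p},0)$, $r_p=\Rk(A_p)$, and form the block-constant matrices $P=\sum_p\chi_{V_p}P_p$ and $Q=\sum_p\chi_{V_p}Q_p$, which are invertible over $C_K(X)$. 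Then $PAQ$ is the diagonal matrix with entries $\chi_{U_k}$, where $U_k=\{x\in X: \Rk(A(x))\ge k\}$ is clopen and $U_1\supseteq U_2\supseteq\cdots$ (after discarding zero rows or columns, which by axiom b) do not affect $\rk$, we may take $A$ square). Unit invariance gives $\rk(A)=\rk(PAQ)$, and iterating axiom c) gives $\rk(PAQ)=\sum_k \rk(\chi_{U_k})=\sum_k\mu(U_k)$. Since $\Rk(A(x))=\sum_{k\ge 1}\mathbf{1}_{U_k}(x)$, this equals $\int_X\Rk(A(x))\,d\mu(x)=\rk_\mu(A)$. The crux is exactly this reduction of the value of $\rk$ on an arbitrary matrix to its values on characteristic functions, i.e. to $\mu$.

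Finally I would record the structural claims. Both maps are affine ($\Xi$ is linear in $\mu$, and $\Theta$ is determined by the affine assignment $\rk\mapsto(\rk(\chi_U))_U$), and they are continuous for the weak-$*$ topology on $M(X)$ and the topology of pointwise convergence on $\Ps(C_K(X))$: $\Theta$ only reads off the numbers $\rk(\chi_U)$, while for $\Xi$ the clopen sets $U_k$ attached to a fixed matrix $A$ do not depend on $\mu$, so $\mu_i\to\mu$ forces $\rk_{\mu_i}(A)=\sum_k\mu_i(U_k)\to\sum_k\mu(U_k)=\rk_\mu(A)$. A continuous affine bijection between compact Hausdorff convex sets is an affine homeomorphism, giving $\Ps(C_K(X))=M(X)$. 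For equivariance, recall $\alpha_n(\chi_U)=\chi_{T^n(U)}$, so the $\Z$-action $\rk\mapsto\rk\circ\alpha_1$ on $\Ps(C_K(X))$ corresponds under $\Theta$ to $\mu\mapsto\mu\circ T$ on $M(X)$, since $(\rk\circ\alpha_1)(\chi_U)=\rk(\chi_{T(U)})=\mu(T(U))$. Hence $\rk$ is $T$-invariant iff $\mu(T(U))=\mu(U)$ for all clopen $U$, iff (by uniqueness of the extension) $\mu$ is $T$-invariant, establishing the correspondence $M^{\Z}(X)\leftrightarrow\Ps^{\Z}(C_K(X))$.
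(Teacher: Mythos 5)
Your proof is correct, but it takes a genuinely different route through the one nontrivial point. The paper disposes of the matrix level in one line: since $C_K(X)$ is a commutative von Neumann regular ring, $\Ps(C_K(X))$ coincides with the set of pseudo-rank functions (this is \cite[Proposition 16.20]{Goo91}, quoted in Section 2.2), so both directions of the correspondence only need to be checked on single elements, and additivity of $\ol{\mu}_{\rk}$ on disjoint clopen sets is literally axiom (c) of the pseudo-rank definition. You instead stay at the matrix level throughout: you recover additivity of $\ol{\mu}$ from the Sylvester axioms by a conjugation trick, you define $\rk_\mu$ on all matrices by the integral formula $\rk_\mu(A)=\int_X \Rk(A(x))\,d\mu(x)$ (which restricts to the paper's formula $\sum_{i:\lambda_i\neq 0}\mu(U_i)$ on elements), and you prove the crucial identity $\Xi\circ\Theta=\mathrm{id}$ by simultaneously diagonalising an arbitrary matrix over $C_K(X)$ into $\mathrm{diag}(\chi_{U_1},\chi_{U_2},\dots)$ with $U_k=\{x:\Rk(A(x))\ge k\}$. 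That diagonalisation is in effect a self-contained proof, for this particular ring, of the Goodearl facts the paper cites (unique extension of pseudo-rank functions to matrices and the identification of $\Ps$ with pseudo-rank functions on a regular ring). What your version buys is independence from the regular-ring machinery plus an explicit closed formula for $\rk_\mu$ on matrices; what the paper's version buys is brevity. Your equivariance argument and the topological/affine bookkeeping match the paper's (the latter is left implicit there under the phrase ``natural identification''). All the individual steps check out: the swap-conjugation does turn $\mathrm{diag}(\chi_U,\chi_V)$ into $\mathrm{diag}(\chi_{U\sqcup V},0)$, compactness does upgrade finite additivity on the clopen algebra to the premeasure condition, and discarding zero rows and columns via axiom (b) legitimises the iterated use of axiom (c).
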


\begin{proof}
Note that $R = C_K(X)$ is a commutative von Neumann regular ring for each field $K$. Hence the set $\Ps(R)$ coincides with the set of pseudo-rank functions on $R$. Now it is clear that a pseudo-rank function $\rk$ on $R$ induces a finitely additive probability measure on the algebra of clopen subsets of $X$ by the rule $\ol{\mu}_{\rk}(U) = \rk(\chi_U)$ 
for every clopen subset $U$ of $X$, which by the same argument as in the proof of Proposition \ref{proposition-unique.rank}, can be uniquely extended to a Borel probability measure $\mu_{\rk}$ on $X$.

Conversely, any Borel probability measure $\mu$ induces a pseudo-rank function $\rk_{\mu}$ on $R$, as follows. Each element $a \in R$ can be written in the form $a = \sum _{i=1}^n \lambda_i \chi_{U_i}$, 
where $\lambda_i \in K$ and $\{ U_i \}_{i=1}^n$ forms a partition of $X$, where each $U_i$ is a clopen subset of $X$.  We define
$$\rk_{\mu}(a) = \sum_{i : \lambda_i \neq 0} \mu(U_i).$$
Let us check that it is indeed a pseudo-rank function on $R$. Clearly $\rk_{\mu}(0) = 0$ and $\rk_{\mu}(1) = 1$. If $a = \sum _{i=1}^n \lambda_i \chi_{U_i}, b = \sum _{j=1}^m \eta_j \chi_{V_j}$ 
with $\{U_i\}_{i=1}^n, \{V_j\}_{j=1}^m$ partitions of $X$ consisting of clopen sets, and $\lambda_i, \eta_j \in K$, then $ab = \sum_{i,j} \lambda_i \eta_j \chi_{U_i \cap V_j}$, and so
$$\rk_{\mu}(ab) = \sum_{i : \lambda_i \neq 0} \sum_{j : \eta_j \neq 0} \mu(U_i \cap V_j) \leq \sum_{i : \lambda_i \neq 0} \sum_j \mu(U_i \cap V_j) = \sum_{i : \lambda_i \neq 0} \mu(U_i) = \rk_{\mu}(a).$$
Symmetrically we get $\rk_{\mu}(ab) \leq \rk_{\mu}(b)$. To conclude, take $a = \chi_U$ and $b = \chi_V$ two orthogonal idempotents of $R$, so $U, V$ are disjoint clopen subsets of $X$. Then
$$\rk_{\mu}(a + b) = \mu(U) + \mu(V) = \rk_{\mu}(a) + \rk_{\mu}(b)$$
as required.

In this way, we obtain a canonical identification between $\Ps(R)$ and $M(X)$, since it is easily checked that $\rk_{\mu_{\rk}} = \rk$ and $\mu_{\rk_{\mu}} = \mu$. 
Now if $\mu$ is $T$-invariant and $a = \sum_{i=1}^n \lambda_i \chi_{U_i}$ is an element of $C_K(X)$ with $\{U_i\}_{i=1}^n$ a partition of $X$ consisting of clopen sets, then $T(a) = \sum_{i=1}^n \lambda_i \chi_{T(U_i)}$ and
$$\rk_{\mu}(T(a)) = \sum_{i : \lambda_i \neq 0} \mu(T(U_i)) = \sum_{i : \lambda_i \neq 0} \mu(U_i) = \rk_{\mu}(a).$$
Hence $\rk_{\mu}$ is $T$-invariant. Conversely, if $\rk$ is a $T$-invariant Sylvester matrix rank function, then
$$\mu_{\rk}(T(U)) = \rk(\chi_{T(U)}) = \rk(T(\chi_U)) = \rk(\chi_U) = \mu(U) \quad \text{ for every clopen subset $U$ of } X.$$
Since the extension to a Borel probability measure is unique, we conclude that $\mu$ is also $T$-invariant.
\end{proof}

\begin{proposition}\label{proposition-extr.points.are.regular}
Continue with the above notation. For each $\mu \in \partial_e M^{\Z}(X)$ there exists $\rk \in \partial_e \Ps(\calA) \cap \Ps_{{\rm reg}}(\calA)$ such that $\rk(\chi_U)= \mu(U)$ for all clopen subset $U$ of $X$.   
\end{proposition}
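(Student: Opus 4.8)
The plan is to localize the construction at the support of $\mu$ and transport the resulting rank function back to $\calA$ along a canonical quotient. Write $Y := \operatorname{supp}(\mu)$, a nonempty closed $T$-invariant subset of $X$, on which $\mu$ restricts to a full ergodic $T$-invariant probability measure. Since $X$ is totally disconnected compact and the elements of $C_K(X)$ are locally constant, every clopen subset of $Y$ is the trace on $Y$ of a clopen subset of $X$; hence the restriction map $C_K(X)\to C_K(Y)$ is a surjective $T$-equivariant $*$-homomorphism, and it extends to a surjective $*$-homomorphism
\[
\rho\colon \calA = C_K(X)\rtimes_T\Z \twoheadrightarrow \calC := C_K(Y)\rtimes_{T|_Y}\Z,
\]
whose kernel is the ideal generated by the $\chi_V$ for clopen $V\subseteq X\setminus Y$; explicitly $\rho(\chi_U)=\chi_{U\cap Y}$ for every clopen $U\subseteq X$.

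First I would manufacture an extreme, regular Sylvester matrix rank function $\ol{\rk}$ on $\calC$ with $\ol{\rk}(\chi_V)=\mu(V)$ for all clopen $V\subseteq Y$, splitting into two cases. If $Y$ is infinite, then $(Y,T|_Y,\mu)$ is precisely the situation of Section \ref{section-Sylvester.rank.functions.A}, and Proposition \ref{proposition-unique.rank} supplies such an $\ol{\rk}$: it is faithful, lies in $\partial_e\Ps(\calC)$, and is regular, being the restriction of the rank on the regular ring $\gotR_{\rk}$ (Theorem \ref{theorem-completion.A.Ay}). If $Y$ is finite, then ergodicity forces $\mu$ to be the uniform measure on a single periodic orbit $\{y,T(y),\dots,T^{l-1}(y)\}$, and as in Proposition \ref{proposition-periodic.point}(i) one has $\calC\cong M_l(K[s,s^{-1}])$; here I would take $\ol{\rk}$ to be the rank induced by the inclusion $M_l(K[s,s^{-1}])\hookrightarrow M_l(K(s))$ into the simple Artinian ring $M_l(K(s))$. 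This $\ol{\rk}$ is regular, satisfies $\ol{\rk}(\chi_V)=|V|/l=\mu(V)$, and is extreme: because $K[s,s^{-1}]$ is a principal ideal domain, Smith normal form shows that any Sylvester matrix rank function giving value $1$ to every nonzero element of $K[s,s^{-1}]$ must coincide with $\ol{\rk}$ on all matrices, so $\ol{\rk}$ is extreme on $K[s,s^{-1}]$ and hence, via the Morita identification $\Ps(M_l(K[s,s^{-1}]))\cong\Ps(K[s,s^{-1}])$, on $\calC$. In either case set $\rk:=\ol{\rk}\circ\rho$; this is a Sylvester matrix rank function (a pullback along a ring homomorphism), it is regular since it is induced by the composite of $\rho$ with an embedding into a regular ring, and $\rk(\chi_U)=\ol{\rk}(\chi_{U\cap Y})=\mu(U\cap Y)=\mu(U)$.

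It remains to check $\rk\in\partial_e\Ps(\calA)$, and the heart of the matter is to show that any convex decomposition of $\rk$ descends through $\rho$. Suppose $\rk=\alpha N_1+(1-\alpha)N_2$ with $N_1,N_2\in\Ps(\calA)$ and $\alpha\in(0,1)$. As in the proof of Proposition \ref{proposition-unique.rank}(ii), each $N_i$ determines a $T$-invariant Borel probability measure $\mu_{N_i}$ on $X$ via $\mu_{N_i}(U)=N_i(\chi_U)$ (uniqueness of the extension by \cite[Theorem 1.14]{Fol84}), and $N\mapsto\mu_N$ is affine, so $\mu=\alpha\mu_{N_1}+(1-\alpha)\mu_{N_2}$. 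Since $\mu$ is an extreme point of $M^{\Z}(X)$ and $\alpha\in(0,1)$, we obtain $\mu_{N_1}=\mu_{N_2}=\mu$; in particular $N_i(\chi_V)=\mu(V)=0$ for every clopen $V\subseteq X\setminus Y$, as $Y=\operatorname{supp}(\mu)$. The kernel $\ker(N_i)$ is a two-sided ideal, and it contains every generator $\chi_V t^j$ of $\ker\rho$ because $N_i(\chi_V t^j)=N_i(\chi_V)=0$; hence $\ker\rho\subseteq\ker(N_i)$ and $N_i$ factors as $N_i=\ol{N}_i\circ\rho$ for a unique Sylvester matrix rank function $\ol{N}_i$ on $\calC$. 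Using surjectivity of $\rho$, the identity $\rk=\alpha N_1+(1-\alpha)N_2$ yields $\ol{\rk}=\alpha\ol{N}_1+(1-\alpha)\ol{N}_2$, and extremality of $\ol{\rk}$ forces $\ol{N}_1=\ol{N}_2=\ol{\rk}$, whence $N_1=N_2=\rk$. I expect the finite-orbit case to be the main obstacle: the machinery of Section \ref{section-Sylvester.rank.functions.A} presupposes an infinite space, so extremality of the generic rank on $M_l(K[s,s^{-1}])$ has to be established by hand through Smith normal form, and one must verify carefully that $\ker\rho$ is annihilated by each $N_i$ so that the decomposition genuinely descends to $\calC$.
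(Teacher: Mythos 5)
Your proof is correct. In the infinite-support case it is essentially the paper's argument: restrict to $X':=\operatorname{supp}(\mu)$, where $\mu$ becomes full and ergodic, invoke Theorem \ref{theorem-completion.A.Ay} and Proposition \ref{proposition-unique.rank} for $C_K(X')\rtimes_T\Z$, and pull back along the canonical projection. Where you genuinely diverge is the periodic-orbit case: the paper builds a finite-dimensional representation $\rho\colon\calA\to M_l(K)$ (sending $t$ to the cyclic permutation matrix, i.e.\ evaluating $s=t^l$ at $1$) and pulls back the unique rank of $M_l(K)$, whereas you identify $C_K(\calO(x))\rtimes_T\Z$ with $M_l(K[s,s^{-1}])$ and use the generic rank induced by $M_l(K[s,s^{-1}])\hookrightarrow M_l(K(s))$, establishing extremality via Smith normal form over the principal ideal domain $K[s,s^{-1}]$. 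Both choices are legitimate since the proposition only asserts existence, and Remark \ref{remark-non.uniqueness} confirms that uniqueness fails here; your rank has the mild advantage of being faithful on the quotient (the paper's kills $t^l-1$), at the cost of a slightly less elementary target ring, and the Smith-normal-form argument is a clean, self-contained proof of extremality that the paper does not spell out (it only asserts "it is not difficult to see"). You also make explicit the descent of convex decompositions through the surjection onto the support --- each component $N_i$ induces the same invariant measure $\mu$ by extremality of $\mu$ in $M^{\Z}(X)$, hence annihilates the generators $\chi_V t^j$ of $\ker\rho$ and factors through $\calC$ --- which is precisely the verification the paper compresses into the phrase "we see that $\rk\circ P\in\partial_e\Ps(\calA)$". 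This added detail is welcome and the steps (that $\ker(N_i)$ is a two-sided ideal, that $N\mapsto\mu_N$ is affine by uniqueness of the Borel extension, and that a rank function vanishing on an ideal descends to the quotient) are all sound.
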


\begin{proof}
Note that, by \cite[Theorem 8.1.8]{Phillips}, $\partial_e M^{\Z}(X)$ is the set of ergodic $T$-invariant Borel probability measures on $X$. Now if $\mu \in \partial_e M^{\Z}(X)$, then following the observation given at the beginning of Section \ref{section-Sylvester.rank.functions.A}, 
\begin{enumerate}[a)]
\item either there is a periodic point $x \in X$ of $T$, of period $l \geq 1$, such that $\mu(\{x\}) = \frac{1}{l}$, and the support of $\mu$ is the orbit $\calO(x)$ of $x$, or
\item $X$ is atomless and the action is essentially free, in the sense that the set of periodic points is a $\mu$-null set (see \cite[Remark 2.3]{KL}).
\end{enumerate}
In the former case, we follow the idea given in the proof of Proposition \ref{proposition-periodic.point}. We construct a map $\rho : \calA \ra M_l(K)$ by sending

%$f \in C_K(X)$ to the diagonal matrix
%$$\text{diag}(f(x),f(T(x)),...,f(T^{l-1}(x))),$$
%and sending $t$ to the matrix $u = \sum_{i=0}^{n-1}e_{i+1,i} + e_{0,n-1}$, where $\{e_{ij}\}_{0 \leq i,j \leq l-1}$ are the canonical matrix units in $M_l(K)$.
\[ f \in C_K(X), \text{ } f \mapsto \left( \begin{array}{@{}c@{}}
											\begin{matrix}
												f(x) & & & \bigzero \\
												  & f(T(x)) & & \\
													& & \ddots & \\
													\hspace*{0.2cm} \bigzero & & & f(T^{l-1}(x))
											\end{matrix}											
											\end{array}\right), \qquad t \mapsto \left( \begin{array}{@{}c@{}}
																									\begin{matrix}
																										0 & & & 0 & 1 \\
																										1	& 0 & & & 0 \\
																											& \ddots & \ddots & & \\
																											& & \ddots & 0 & \\
																										\hspace*{0.2cm} \bigzero & & & 1 & 0
																									\end{matrix}											
																									\end{array}\right) = u. \]
It can be verified that $u \rho(f) u^{-1} = \rho(T(f))$ by direct computation. It follows from the universal property of the crossed product that there is a unique algebra 
homomorphism $\rho : \calA \ra M_l(K)$ extending the above assignments. Now $M_l(K)$ is a regular rank ring, with unique normalized rank function $\rk_l$, so it induces a Sylvester matrix 
rank function on $\calA$ by the rule $\rk(a) = \rk_l(\rho (a))$ for $a \in \calA$. It is not difficult to see that the restriction of $\rk$ on $C_K(X)$ gives the Sylvester matrix rank function $\rk_{\mu}$ constructed in Lemma \ref{lemma-ranks.on.CK(X)}, and also that $\rk \in \partial_e \Ps(\calA) \cap \Ps_{{\rm reg}}(\calA)$.
%If $U$ is a clopen subset of $X$,
%$$\rk(\chi_U) = \frac{\Rk(\rho(\chi_U))}{l} = \frac{1}{l}|\{j \mid T^j(x) \in U\}| = \sum_{j=0}^{l-1} \mu(U \cap \{T^j(x)\}) = \mu(U),$$
%and since $\mu$ is an ergodic measure, $\rk$ is extremal by the same arguments as in the proof of Proposition \ref{Ch2-proposition-unique.rank}. It is not difficult to see, using the previous computation, that the restriction of $\rk$ on $C_K(X)$ gives the Sylvester matrix rank function $\rk_{\mu}$ constructed in Lemma \ref{Ch2-lemma-ranks.on.CK(X)}. Therefore $\rk \in \partial_e \Ps(\calA) \cap \Ps_{{\rm reg}}(\calA)$.

In the latter case, we may restrict to the closed subspace $X':= \text{supp}(\mu)$ of $X$, which is an infinite, totally disconnected, compact metric space. Since $\mu$ is $T$-invariant, $T$ restricts to a homeomorphism of $X'$ and the restriction of $\mu$ to $X'$ is a full ergodic $T$-invariant probability measure. It follows from Theorem \ref{theorem-completion.A.Ay} and Proposition \ref{proposition-unique.rank} that there is $\rk \in \partial_e \Ps(\calA') \cap \Ps_{{\rm reg}}(\calA')$, where $\calA':= C_K(X') \rtimes_T \Z$, such that $\rk$ induces $\rk_{\mu}$ on $C_K(X)$. Considering the canonical projection
$$P : C_K(X) \rtimes _T \Z  \to C_K(X')\rtimes _T \Z,$$
we see that $\rk_{\calA} = \rk \circ P \in \partial_e \Ps(\calA) \cap \Ps_{{\rm reg}}(\calA)$, as desired. It is straightforward to check that $\rk_{\calA}$ satisfies the desired compatibility property with the measure $\mu$.
\end{proof}

\begin{remark}\label{remark-non.uniqueness}
In the case where $\mu \in \partial_e M^{\Z} (X)$ is a measure concentrated in the orbit of a periodic point, we cannot expect uniqueness of the extremal Sylvester matrix rank function on $\calA$ extending $\rk_{\mu}$, 
essentially because of the appearance of isotropy.

Consider, for example, the case of a fixed point $X = \{x\}$, with associated measure $\mu$ satisfying $\mu(\{x\})= 1$, and $K$ being any field of characteristic different from $2$. We obtain an extremal Sylvester matrix rank function $\rk'$ by pulling back the unique Sylvester matrix rank function on $K$ via the homomorphism
$$\calA \cong K[t,t^{-1}] \ra K[t,t^{-1}]/(t-\alpha) \cong K,$$
the first isomorphism given by $f \mapsto f(x)$, $t \mapsto t$, and $\alpha \in K \backslash \{0,1\}$. This Sylvester matrix rank function induces the same measure $\mu$ as in Proposition \ref{proposition-extr.points.are.regular}, but the rank functions are clearly different, since $\rk'(t-1) = 1$ and $\rk(t-1) = 0$. This shows the nonuniqueness statement above.
\end{remark}

To continue, we need the following result from \cite{JL}.

\begin{proposition}\label{proposition-from.JL}
Let $A=K[t,t^{-1}]$. Then $\Ps(A) = \Ps_{{\rm reg}}(A)$.
\end{proposition}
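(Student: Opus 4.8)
The plan is to exploit that $A=K[t,t^{-1}]$ is a commutative principal ideal domain, reduce an arbitrary $\rk\in\Ps(A)$ to combinatorial data, and then realize that data inside an explicit von Neumann regular ring. First I would invoke Smith normal form: every matrix $M$ over $A$ factors as $M=U\Sigma V$ with $U,V$ invertible and $\Sigma$ diagonal with nonzero entries $d_1,\dots,d_r$, so axioms (b) (applied to $U,V$ and their inverses) and (c) of Definition \ref{Ch1-definition-sylvester.rank} give $\rk(M)=\sum_i\rk(d_i)$. Hence $\rk$ is completely determined by the function $\phi(d):=1-\rk(d)$ on elements $d\in A$. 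Factoring $\mathrm{diag}(d_1,d_2)$ for coprime $d_1,d_2$ through its normal form $\mathrm{diag}(1,d_1d_2)$ yields $\phi(d_1d_2)=\phi(d_1)+\phi(d_2)$ on coprime products, so $\phi$ is pinned down by the sequences $a_k^{(p)}:=\phi(p^k)$, where $p$ ranges over the irreducibles of $A$ (the irreducible polynomials of $K[t]$ other than $t$, up to units) and $k\ge1$.

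Second, I would determine the constraints on each sequence $a_k:=a_k^{(p)}$. Monotonicity $a_k\ge a_{k-1}$ and the bounds $0\le a_k\le1$ follow immediately from axiom (b). The essential point, and what I expect to be the main obstacle, is \emph{concavity} $a_{k-1}+a_{k+1}\le 2a_k$. I would obtain it by applying axiom (d) to the triangular matrix $\begin{pmatrix}p^{k}&p^{k-1}\\0&p^{k}\end{pmatrix}$, whose $\gcd$ entry is $p^{k-1}$ and whose determinant is $p^{2k}$, so its Smith normal form is $\mathrm{diag}(p^{k-1},p^{k+1})$; axiom (d) then reads $2-a_{k-1}-a_{k+1}=\rk\begin{pmatrix}p^{k}&p^{k-1}\\0&p^{k}\end{pmatrix}\ge\rk(p^k)+\rk(p^k)=2-2a_k$. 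Thus the admissible data are exactly the collections of increasing, concave, $[0,1]$-valued sequences with $a_0=0$, one per prime, subject to the single global bound $\sum_p a_\infty^{(p)}\le 1$, which comes from $\rk(p_1^k\cdots p_m^k)\ge0$ for distinct primes together with coprime-additivity of $\phi$, letting $k\to\infty$.

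Third, I would decompose. For a fixed prime the increments $\delta_k=a_k-a_{k-1}$ are nonnegative and nonincreasing, so setting $c_n=n(\delta_n-\delta_{n+1})\ge0$ gives $a_k=\sum_{n\ge1}c_n\min(k/n,1)$ with $\sum_n c_n=a_\infty^{(p)}$ (checked on increments, using $n\delta_{n+1}\to0$). The profiles $b^{(n)}_k=\min(k/n,1)$ are exactly the $\phi$-data of the regular rank functions I build next, and the leftover weight $\lambda_\infty=1-\sum_p a_\infty^{(p)}\ge0$ is assigned to the generic point.

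Finally I would realize each building block by a regular ring and assemble. The generic function $\rk_{K(t)}$ comes from $A\hookrightarrow K(t)$; for a maximal ideal $\mathfrak{m}=(p)$ and $n\ge1$, composing $A\to A_{\mathfrak{m}}/\mathfrak{m}^n\cong (A/\mathfrak{m})[\epsilon]/(\epsilon^n)$ with its left regular representation into $M_n(A/\mathfrak{m})$ sends $p$ to a single nilpotent Jordan block and every other prime to an invertible matrix, so the induced $\rk^{(n)}_{\mathfrak{m}}$ satisfies $\rk^{(n)}_{\mathfrak{m}}(p^k)=\max(1-k/n,0)$, with $\phi$-data $b^{(n)}$. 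Forming the product regular ring $S=K(t)\times\prod_{p}\prod_{n}M_n(A/\mathfrak{m}_p)$ equipped with the convex-combination pseudo-rank function weighted by $\lambda_\infty$ and the $c_n^{(p)}$, the diagonal homomorphism $A\to S$ induces a Sylvester matrix rank function whose value on each $p^m$ is $1-\sum_n c_n^{(p)}\min(m/n,1)=1-a_m^{(p)}=\rk(p^m)$. Since two Sylvester rank functions on $A$ agreeing on all elements agree on all matrices (by the Smith-normal-form reduction of the first step), this induced function is $\rk$, witnessing $\rk\in\Ps_{{\rm reg}}(A)$. As a byproduct this identifies $\partial_e\Ps(A)$ with $\rk_{K(t)}$ together with the functions $\rk^{(n)}_{\mathfrak{m}}$, and the genuinely nontrivial input is the concavity step above; the realization and assembly are then routine.
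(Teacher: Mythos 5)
The paper does not actually prove this proposition: it is imported from \cite{JL} (listed there as ``in preparation''), so there is no in-text argument to compare yours against. Judged on its own terms, your proof is correct and essentially complete. The Smith-normal-form reduction to elementary divisors, the coprime additivity of $\phi$, the concavity $a_{k-1}+a_{k+1}\le 2a_k$ extracted from axiom (d) applied to $\begin{pmatrix}p^{k}&p^{k-1}\\0&p^{k}\end{pmatrix}$ (whose Smith form is indeed $\mathrm{diag}(p^{k-1},p^{k+1})$, since the first determinantal divisor is $p^{k-1}$ and the determinant is $p^{2k}$), the decomposition of an increasing concave sequence into the profiles $\min(k/n,1)$ with nonnegative weights summing to $a_\infty$ (using $n\delta_{n+1}\to 0$, which holds because $(\delta_n)$ is nonincreasing and summable), the global bound $\sum_p a_\infty^{(p)}\le 1$, and the final assembly as a countable convex combination of pseudo-rank functions on a product of regular rings all check out, as does the closing observation that two Sylvester rank functions on $A$ agreeing on elements agree on all matrices.

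One genuine, though easily repaired, inaccuracy: the isomorphism $A_{\mathfrak m}/\mathfrak m^n\cong (A/\mathfrak m)[\epsilon]/(\epsilon^n)$ can fail when the residue field $L=A/\mathfrak m$ is inseparable over $K$. For instance, with $K=\mathbb{F}_2(s)$, $p(t)=t^2-s$ and $n=2$, any $K$-algebra map $\theta$ from $A/\mathfrak m^2$ to $L[\epsilon]/(\epsilon^2)$ forces $\theta(t)^2\in L$ to be a square root of $s^2$, hence $\theta(t^2-s)=0$, so $\theta$ cannot be injective. Since the proposition is asserted for an arbitrary field $K$, this step should be replaced: use instead the left regular representation of $A/(p^n)$ on itself as a $K$-vector space, landing in $M_{n\deg p}(K)$ with its normalized rank. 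Multiplication by $p^k$ has image $(p^k)/(p^n)$ of $K$-dimension $(n-k)\deg p$ for $k\le n$, so the normalized rank of $p^k$ is still $\max(1-k/n,0)$, and every prime $q\ne p$ is a unit in the local ring $A/(p^n)$ and hence maps to an invertible matrix. This is all your assembly step uses, so the argument goes through verbatim. The side remark identifying $\partial_e\Ps(A)$ with $\rk_{K(t)}$ and the $\rk^{(n)}_{\mathfrak m}$ would need a short additional argument (extremality of each building block), but it is not needed for the proposition.
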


\begin{theorem}\label{theorem-regular.Sylvester.space}
Let $T$ be a homeomorphism on a totally disconnected compact metric space $X$ and set $\calA = C_K(X)\rtimes _T \Z$. Then we have $\Ps(\calA)= \Ps_{{\rm reg}}(\calA)$.
\end{theorem}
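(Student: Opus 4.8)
The plan is to prove the nontrivial inclusion $\Ps(\calA) \subseteq \Ps_{{\rm reg}}(\calA)$ (the reverse being immediate) by a Krein--Milman argument, after establishing that $\Ps_{{\rm reg}}(\calA)$ is a closed convex subset of $\Ps(\calA)$ containing every extreme point. \emph{Convexity} of $\Ps_{{\rm reg}}(\calA)$ is routine: if $\rk_i = \rk_{S_i}\circ \phi_i$ with $\phi_i \colon \calA \to S_i$ and $S_i$ regular, then $\alpha \rk_1 + \beta \rk_2$ is induced by the diagonal homomorphism $\calA \to S_1 \times S_2$ together with the pseudo-rank function $\alpha\rk_{S_1} \oplus \beta\rk_{S_2}$ on the regular ring $S_1\times S_2$ (factoring out its kernel to obtain a regular rank ring). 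For \emph{closedness}, note that $\calA$ is countable-dimensional, so $\Ps(\calA)$ is metrizable and it suffices to consider sequences. Given regular $\rk_n \to \rk$ with $\rk_n = \rk_{S_n}\circ\phi_n$, fix a nonprincipal ultrafilter $\calV$ on $\N$ and set $S = \prod_n S_n$, which is regular; with $\phi = (\phi_n)_n \colon \calA \to S$ and $\rk_S(M) := \lim_{\calV}\rk_{S_n}(M_n)$, one checks directly that $\rk_S$ satisfies the four axioms of a Sylvester matrix rank function (each is an (in)equality preserved under ultralimits) and that $\rk_S \circ \phi = \lim_{\calV}\rk_n = \rk$. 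Hence $\rk \in \Ps_{{\rm reg}}(\calA)$.

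Next I would show $\partial_e \Ps(\calA) \subseteq \Ps_{{\rm reg}}(\calA)$. Fix $\rk \in \partial_e\Ps(\calA)$ and let $\mu = \mu_{\rk}$ be the $T$-invariant probability measure on $X$ corresponding, via Lemma \ref{lemma-ranks.on.CK(X)}, to the restriction of $\rk$ to $C_K(X)$. The construction in the proof of Proposition \ref{proposition-rank.measure} (which does not use faithfulness) shows that if $\mu$ were not ergodic, then $\rk$ would decompose as a nontrivial convex combination, contradicting extremality; thus $\mu \in \partial_e M^{\Z}(X)$ is ergodic. By \cite[Theorem 8.1.8]{Phillips} there are two cases. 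If $\mu$ is atomless, put $X' = \operatorname{supp}(\mu)$ (an infinite, totally disconnected, compact, $T$-invariant subspace) and $\calA' = C_K(X')\rtimes_T \Z$; since $\mu(X\setminus X') = 0$, the function $\rk$ vanishes on the ideal generated by $C_{c,K}(X\setminus X')$ and therefore descends to a Sylvester matrix rank function on $\calA'$ compatible with the full ergodic atomless measure $\mu|_{X'}$. By the uniqueness statement of Proposition \ref{proposition-unique.rank}(ii) this descended rank function equals $\rk_{\calA'}$, which is induced by the embedding $\calA' \hookrightarrow \gotR_{\rk}$ into a regular ring (Theorem \ref{theorem-completion.A.Ay} applied to $X'$); hence $\rk$ is regular.

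The remaining case is when $\mu$ is concentrated on a periodic orbit $\calO(x)$ of period $l$. Here I would use Proposition \ref{proposition-periodic.point}: since $\mu$ vanishes off $\calO(x)$, $\rk$ vanishes on the ideal $I$ generated by $C_{c,K}(X\setminus \calO(x))$, so it descends to a Sylvester matrix rank function on $\calA/I \cong M_l(K[s,s^{-1}])$. The corner $e_{11}M_l(K[s,s^{-1}])e_{11}\cong K[s,s^{-1}]$ yields a regularity-preserving bijection $\Ps(M_l(K[s,s^{-1}])) \cong \Ps(K[s,s^{-1}])$, so Proposition \ref{proposition-from.JL} forces the descended rank function, and hence $\rk$, to be regular. (In both descents one uses that a matrix with entries in $\ker\rk$ has rank zero, so a rank function vanishing on an ideal passes to the quotient.) Finally, since $\Ps(\calA)$ is compact and convex, Krein--Milman gives $\Ps(\calA) = \overline{\mathrm{conv}}(\partial_e\Ps(\calA))$, and as $\Ps_{{\rm reg}}(\calA)$ is closed, convex and contains $\partial_e\Ps(\calA)$, it contains this hull; therefore $\Ps(\calA) = \Ps_{{\rm reg}}(\calA)$.

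I expect the periodic-orbit case to be the main obstacle: uniqueness of the $\mu$-compatible rank function fails there (Remark \ref{remark-non.uniqueness}), so $\rk$ cannot simply be identified with a previously constructed regular rank function, and one is forced to exploit the explicit quotient $\calA/I \cong M_l(K[s,s^{-1}])$ in order to invoke the Laurent-polynomial input of Proposition \ref{proposition-from.JL}. Establishing closedness of $\Ps_{{\rm reg}}(\calA)$ is the other delicate point, although the ultralimit construction above handles it cleanly.
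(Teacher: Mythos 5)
Your proof is correct and its core follows the same route as the paper: reduce to extreme points, use Proposition \ref{proposition-rank.measure} (minus fullness) to attach an ergodic $T$-invariant measure $\mu_{\rk}$ to an extremal $\rk$, and then split according to whether $\mu_{\rk}$ is supported on a periodic orbit (descend to $\calA/I\cong M_l(K[s,s^{-1}])$ and invoke Proposition \ref{proposition-from.JL}) or is atomless (restrict to $X'=\operatorname{supp}(\mu_{\rk})$ and use Theorem \ref{theorem-completion.A.Ay} together with the uniqueness statement of Proposition \ref{proposition-unique.rank}). The one genuine divergence is the reduction step: the paper simply cites \cite[Proposition 5.9]{Jaik} for the fact that it suffices to check extreme points, whereas you re-prove it from scratch by showing $\Ps_{\rm reg}(\calA)$ is convex (diagonal map into $S_1\times S_2$) and closed (ultraproduct of the target regular rings) and then applying Krein--Milman. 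That buys self-containedness at the cost of length; it is a perfectly valid substitute. Two small caveats: your metrizability claim (``$\calA$ is countable-dimensional, so $\Ps(\calA)$ is metrizable'') is not obviously justified when $K$ is uncountable, since the topology on $\Ps(\calA)$ is pointwise convergence on the uncountable set of all matrices; but this is harmless, as your ultrafilter argument applies verbatim to nets, so closedness does not actually need sequences. Also, you are right that the paper's phrase ``the arguments in Proposition \ref{proposition-extr.points.are.regular} apply'' silently uses exactly the descent-plus-uniqueness step you spell out (that $\rk$ kills the ideal generated by $C_{c,K}(X\setminus X')$ and the induced rank function on $\calA'$ must coincide with the regular one by Proposition \ref{proposition-unique.rank}(ii)); making that explicit is an improvement rather than a deviation.
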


\begin{proof}
Let $\Ps^{\Z} (C_K(X)) = M^{\Z} (X)$ be the space of $T$-invariant measures on $X$, which we identify with the set of $T$-invariant Sylvester matrix rank functions on $C_K(X)$ (Lemma \ref{lemma-ranks.on.CK(X)}).
By \cite[Proposition 5.9]{Jaik}, it suffices to show that all extremal Sylvester matrix rank functions on $\calA$ are regular. Let $\rk \in \partial_e \Ps(\calA)$, and let $\mu_{\rk}$ be the ergodic, full, $T$-invariant probability measure on $X$ given by Proposition \ref{proposition-rank.measure}.

Assume first that $\mu_{\rk}$ is a measure concentrated in the orbit of a periodic point $x$, of period $l$. In this case, $\rk$ induces an extremal Sylvester matrix rank function on $C_K(\calO (x)) \rtimes_T \Z$, which is $*$-isomorphic to $M_l(K[t^l, t^{-l}])$ via the map
\[ f \in C_K(\calO (x)), \text{ } f \mapsto \left( \begin{array}{@{}c@{}}
											\begin{matrix}
												f(x) & & & \bigzero \\
												  & f(T(x)) & & \\
													& & \ddots & \\
													\hspace*{0.2cm} \bigzero & & & f(T^{l-1}(x))
											\end{matrix}											
											\end{array}\right), \qquad t \mapsto \left( \begin{array}{@{}c@{}}
																									\begin{matrix}
																										0 & & & 0 & t^l \\
																										1	& 0 & & & 0 \\
																											& \ddots & \ddots & & \\
																											& & \ddots & 0 & \\
																										\hspace*{0.2cm} \bigzero & & & 1 & 0
																									\end{matrix}											
																									\end{array}\right) \]
and so, by Proposition \ref{proposition-from.JL}, $\rk$ is a regular Sylvester matrix rank function.

If the support of $\mu_{\rk}$ is infinite then, since $\mu_{\rk}$ is an ergodic $T$-invariant measure, the arguments in Proposition \ref{proposition-extr.points.are.regular} apply to give that $\rk \in \Ps_{{\rm reg}}(\calA)$.

Thus in any case we get that $\rk \in \Ps_{{\rm reg}}(\calA)$, and the proof is complete.
\end{proof}

\section*{Acknowledgments}

The authors would like to thank Hanfeng Li for his helpful comments.

\end{document}